\documentclass[letterpaper, pdftex,11pt]{amsart} 
\usepackage[pdftex]{hyperref} 
\usepackage{amsmath,amssymb}
\usepackage{mathtools}
\usepackage{amscd}
\usepackage{mathrsfs}
\usepackage[foot]{amsaddr}
\usepackage{amsthm}
\usepackage{setspace}
\usepackage[all]{xy}
\usepackage{comment}
\usepackage{graphicx} 
\usepackage{tikz}
\usetikzlibrary{arrows}
\usetikzlibrary{shapes.geometric,fit}
\usepackage{tikz-cd}
\usepackage{fancyhdr}

\theoremstyle{definition}
\newtheorem{thm}{Theorem}[subsection]
\newtheorem*{thm*}{Theorem}
\newtheorem{defi}[thm]{Definition}
\newtheorem*{defi*}{Definition}
\newtheorem*{acknowledge}{Acknowledgement}
\newtheorem{cor}[thm]{Corollary}
\newtheorem*{cor*}{Corollary}
\newtheorem{prop}[thm]{Proposition}
\newtheorem*{prop*}{Proposition}
\newtheorem{lem}[thm]{Lemma}
\newtheorem*{lem*}{Lemma}
\newtheorem{ex}[thm]{Example}
\newtheorem*{ex*}{Example}

\newtheorem{rem}[thm]{Remark}
\newtheorem*{rem*}{Remark}

\newtheorem*{claim*}{Claim}

\newtheorem*{hw*}{Homework}
\newtheorem{thmintro}{Theorem}[section]

\newcommand{\C}{\mathbb{C}}

\newcommand{\Z}{\mathbb{Z}}
\newcommand{\N}{\mathbb{N}}
\newcommand{\T}{\mathbb{T}}

\DeclareMathOperator{\supp}{supp}
\DeclareMathOperator{\Tw}{Tw}

\newcommand{\Bis}{\mathrm{Bis}}
\DeclareMathOperator{\Iso}{\mathrm{Iso}}
\DeclareMathOperator{\Hom}{\mathrm{Hom}}

\DeclareMathOperator{\Span}{\mathrm{span}}

\DeclareMathOperator{\dom}{\mathrm{dom}}

\DeclareMathOperator{\id}{\mathrm{id}}
\DeclareMathOperator{\Aut}{\mathrm{Aut}}

\DeclareMathOperator{\insupp}{\mathrm{supp}^{\circ}}
\def\i<#1>{\langle #1 \rangle}
\def\l<#1>{\left\langle #1 \right\rangle}

\makeatletter
\renewenvironment{proof}[1][\proofname]{\par
  \normalfont
  \topsep6\p@\@plus6\p@ \trivlist
  \item[\hskip\labelsep{\bfseries #1}\@addpunct{.}]\ignorespaces
}{
  \endtrivlist
}
\renewcommand{\proofname}{\sc{Proof}}
\makeatother
\makeatletter
\newcommand*{\defeq}{\mathrel{\rlap{%
                     \raisebox{0.3ex}{$\m@th\cdot$}}%
                     \raisebox{-0.3ex}{$\m@th\cdot$}}%
                     =}
\makeatother

\makeatletter
\@namedef{subjclassname@2020}{\textup{2020} Mathematics Subject Classification} 
\makeatother

\title[]{Cartan-preserving *-automorphism groups: realization and obstructions for compact abelian groups}
\author{Fuyuta Komura}
\address{Kyushu institute of technology}
\email{komura@mns.kyutech.ac.jp}
\subjclass[2020]{22A22, 46L40, 20M18}

\begin{document}
\maketitle
\begin{abstract}
	
	In this paper,
	we investigate automorphism groups preserving Cartan subalgebras of C*-algebras.
	First,
	we describe these groups in terms of automorphisms and 1-cocycles of twisted étale groupoids.
	As a consequence,
	we obtain a C*-algebraic analogue of a theorem of Feldman and Moore on Cartan-preserving automorphisms of von Neumann algebras.
	We then study Cartan-fixing automorphism groups.
	We show that every UCT Kirchberg algebra admits a Cartan subalgebra whose Cartan-fixing automorphism group contains every second countable compact abelian group.
	In contrast,
	for C*-algebras arising from expansive effective groupoids,
	we prove that compact Cartan-fixing automorphism groups must have finitely generated Pontryagin duals.
	As an application,
	we establish the existence of inequivalent Cartan subalgebras for Kirchberg algebras arising from expansive effective groupoids.
\end{abstract}

	\section{Introduction}
	
	Cartan subalgebras play a central role in the study of operator algebras.
	A celebrated theorem of Feldman and Moore of \cite{Feldman_Moore_II} asserts that every von Neumann algebra with a Cartan subalgebra arises from a Borel equivalence relation together with a 2-cocycle.
	In addition,
	they studied the groups of Cartan-preserving automorphisms in terms of Borel equivalence relations and 2-cocycles.
	In the C*-algebraic setting,
	Renault proved that every separable C*-algebra with a Cartan subalgebra arises from a twist over an effective \'etale groupoid in \cite{renault}.
	This result was later generalized to the non-separable setting by Raad in \cite{Ali_Raad_generalization_of_Renault}.
	These results provide a dynamical model for C*-algebras admitting Cartan
	subalgebras in terms of twisted \'etale groupoids.
	This viewpoint suggests that structural properties of C*-algebras admitting Cartan subalgebras should be reflected in the corresponding twisted groupoids.
	This philosophy has proved fruitful in the study of groupoid C*-algebras.
	For example,
	several authors have successfully described the ideal structure of twisted groupoid C*-algebras in terms of the underlying groupoids; see
	\cite{Armstrong_uniqueness_theorem_for_twistedgroupoid,
		Armstrong_Brownlowe_Sims_simplicity_of_twistedDeaconu,
		KangLiidealstr,
		Brix_Carsen_Sims2024,
		Brown2014}.
	Moreover, pure infiniteness of groupoid C*-algebras can also be characterized through underlying groupoids; see \cite{AnatharamanDelapurelyinfinite, Brown_Clark_Sierakowski_purelyinfinite}.
	On the other hand, Cartan subalgebras are not unique in general.
	Indeed, many C*-algebras admit several inequivalent Cartan subalgebras (see \cite{XinLiRenaultCartanexistence}, for example).
	Consequently,
	it is important to investigate intrinsic properties of Cartan inclusions and to develop invariants capable of distinguishing them.
	Recent developments in this direction include the study of diagonal dimension \cite{li2023diagonaldimensionsubcalgebras},
	which may be viewed as a dimension theory for inclusions of C*-diagonals.
	
	In the present paper,
	we investigate Cartan-preserving automorphism groups in order to understand the structural properties of Cartan inclusions.
	Let $G$ be a locally compact Hausdorff \'etale groupoid and $\Sigma$ be a twist over $G$.
	Then one associates an inclusion of C*-algebras $C_0(G^{(0)})\subset C^*_r(\Sigma; G)$.
	If $G$ is effective,
	then $C_0(G^{(0)})\subset C^*_r(\Sigma;G)$ is a Cartan subalgebra.
	Conversely,
	Renault's reconstruction theorem \cite{renault} asserts that every Cartan inclusion of a separable C*-algebra arises in this way.
	For an inclusion of C*-algebras $D\subset A$,
	we put
	\begin{align*}
		&\Aut(A;D)\defeq \{\varphi\in \Aut(A)\mid \varphi(D)=D\},\\
		&\Aut_D(A)\defeq\{\varphi\in \Aut(A)\mid \varphi(d)=d\text{ for all $d\in D$}\}.
	\end{align*}
	The main objects in this paper are the following two groups:
	\begin{align*}
		\Aut(C^*_r(\Sigma;G); C_0(G^{(0)})), \Aut_{C_0(G^{(0)})}(C^*_r(\Sigma; G)).
	\end{align*}
	The first group consists of automorphisms preserving the Cartan subalgebra,
	while the second consists of automorphisms fixing it pointwise.
	We remark that the second group is a normal subgroup of the first.
	Taylor's theorem \cite[Theorem 6.10]{Jonarhan2025essentialCartan} shows that $\Aut(C^*_r(\Sigma;G); C_0(G^{(0)}))$ is isomorphic to the automorphism group $\Aut(\Sigma;G)$ of the twisted groupoid $(\Sigma, G)$ when $G$ is effective.
	More generally,
	Taylor established this result for effective \'etale groupoids that are not necessarily Hausdorff in \cite{Jonarhan2025essentialCartan}.
	While Taylor's theorem provides a complete description of $\Aut(C^*_r(\Sigma;G);C_0(G^{(0)}))$,
	the structure of its subgroup $\Aut_{C_0(G^{(0)})}(C^*_r(\Sigma;G))$ has remained unclear.
	In the present paper,
	we begin by investigating the inclusion 
	\[
	\Aut_{C_0(G^{(0)})}(C^*_r(\Sigma; G))\subset \Aut(C^*_r(\Sigma;G); C_0(G^{(0)})).
	\]
	More precisely,
	our first main theorem is the following:
	
	\begin{thmintro}[{Corollary \ref{cor: C*-analogue of Feldman moore}}]\label{thmintro: C*-analogue of Feldman}
	Let $G$ be a locally compact Hausdorff effective \'etale groupoid and $\Sigma$ be a twist over $G$.
	Put
	\[
	\mathfrak{W}_{C^*_r(\Sigma; G), C_0(G^{(0)})}\defeq \Aut(C^*_r(\Sigma; G); C_0(G^{(0)}))/\Aut_{C_0(G^{(0)})}(C^*_r(\Sigma;G)).
	\]
	Then the short exact sequence
	\[
	\Aut_{C_0(G^{(0)})}(C^*_r(\Sigma;G))\to \Aut(C^*_r(\Sigma;G); C_0(G^{(0)}))\to \mathfrak{W}_{C^*_r(\Sigma; G), C_0(G^{(0)})}
	\]
	is isomorphic to
	\[
	Z(G)\to \Aut(\Sigma;G)\to \Aut(G)_{\Sigma},
	\]
	where $Z(G)$ is the abelian group of 1-cocycles on $G$ and $\Aut(G)_{\Sigma}$ is the stabilizer group of the action $\Aut(G)\curvearrowright \Tw(G)$ at $\Sigma$ (see Proposition \ref{prop: left action of AutG on TwG} for this action).
	\end{thmintro}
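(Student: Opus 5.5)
The plan is to build the isomorphism of short exact sequences on top of Taylor's theorem \cite[Theorem 6.10]{Jonarhan2025essentialCartan}. That result identifies $\Aut(C^*_r(\Sigma;G);C_0(G^{(0)}))$ with $\Aut(\Sigma;G)$ when $G$ is effective. An element of $\Aut(\Sigma;G)$ is a pair $(\tilde\alpha,\alpha)$: a $\T$-equivariant groupoid automorphism $\tilde\alpha$ of $\Sigma$ covering an automorphism $\alpha$ of $G$. The corresponding C*-automorphism acts on sections by $f\mapsto f\circ\tilde\alpha^{-1}$. The middle vertical map is this isomorphism, and the rest of the argument identifies the kernel and the image of the forgetful map $\Aut(\Sigma;G)\to\Aut(G)$, $(\tilde\alpha,\alpha)\mapsto\alpha$, and checks that they match the two outer groups.

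\textbf{Left-hand term.} First I show that $\varphi$ fixes $C_0(G^{(0)})$ pointwise if and only if the restriction of $\alpha$ to $G^{(0)}$ is the identity. Since $G$ is effective, the unit space is dense in $G$ in the appropriate sense, and an automorphism fixing all units pointwise must be the identity on $G$. I would prove this as follows. For an open bisection $U$, the partial homeomorphism of $G^{(0)}$ induced by $\alpha(U)$ agrees with that induced by $U$. Hence $U^{-1}\alpha(U)$ acts trivially on an open set, so effectiveness places it in $G^{(0)}$, which gives $\alpha(U)=U$ pointwise. So the kernel of the forgetful map is the group of automorphisms of $\Sigma$ covering $\id_G$. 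Such an automorphism has the form $\tilde\alpha(\sigma)=c(\dot\sigma)\sigma$, where $c\colon G\to\T$ is continuous; $c$ is well defined because $\tilde\alpha(\sigma)$ and $\sigma$ lie in the same $\T$-fibre and $\tilde\alpha$ is $\T$-equivariant. Multiplicativity of $\tilde\alpha$ is equivalent to $c$ being a 1-cocycle, i.e.\ a continuous homomorphism $G\to\T$. Composing two such automorphisms multiplies their cocycles, which yields an isomorphism $Z(G)\cong\ker$ that is compatible with the middle isomorphism.

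\textbf{Right-hand term and exactness.} Using the action $\Aut(G)\curvearrowright\Tw(G)$ of Proposition \ref{prop: left action of AutG on TwG}, where $\alpha\cdot\Sigma$ is $\Sigma$ with its projection to $G$ composed with $\alpha$ (up to isomorphism of twists), I show that $\alpha$ lifts to some $(\tilde\alpha,\alpha)\in\Aut(\Sigma;G)$ if and only if $\alpha\cdot\Sigma\cong\Sigma$ as twists over $G$. The lift $\tilde\alpha$ is precisely such an isomorphism of twists, and conversely any such isomorphism is a lift. Therefore the image of the forgetful map is the stabilizer $\Aut(G)_\Sigma$. The quotient $\mathfrak{W}$ is then identified with $\Aut(\Sigma;G)/Z(G)\cong\Aut(G)_\Sigma$, and the diagram of short exact sequences commutes by construction.

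\textbf{Main obstacle.} The most delicate step is showing that fixing $C_0(G^{(0)})$ pointwise forces $\alpha=\id_G$, in a way that stays valid without second countability. I would argue with bisections and effectiveness as sketched above: take $g\in G$ and any open bisection $U$ containing $g$, and use that $\alpha(U)$ and $U$ induce the same partial homeomorphism. The groupoid $\alpha(U)^{-1}U$ is then contained in the interior of the isotropy, which equals $G^{(0)}$ by effectiveness. A second point needs care: the continuity of the function $c$ defined on $G$. I would obtain it from local continuous sections of $\Sigma\to G$, which exist because the twist is locally trivial.
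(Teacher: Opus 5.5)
Your proposal is correct and follows essentially the same route as the paper. Both arguments use Taylor's isomorphism $\Aut(\Sigma;G)\cong\Aut(C^*_r(\Sigma;G);C_0(G^{(0)}))$ together with the exact sequence $Z(G)\to\Aut(\Sigma;G)\to\Aut(G)_{\Sigma}$ (the kernel consists of automorphisms covering $\id_G$, and the image is the stabilizer because a lift of $\Psi$ is exactly a twist isomorphism $\Sigma_{\Psi}\cong\Sigma$), and both use effectiveness to show that fixing $C_0(G^{(0)})$ pointwise forces the induced automorphism of $G$ to be $\id_G$. The differences are minor: you prove that rigidity step directly via the open bisection $U^{-1}\alpha(U)\subset\Iso(G)^{\circ}=G^{(0)}$ where the paper cites Proposition \ref{prop: restriction of autG to G0 is injective}, and you get continuity of $c$ from local sections where the paper uses that $q$ is a quotient map.
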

	
	We remark that Theorem \ref{thmintro: C*-analogue of Feldman} can be viewed as a C*-algebraic analogue of \cite[Theorem 3]{Feldman_Moore_II},
	where the authors proved a corresponding statement in the von Neumann algebraic setting.
	An important distinction from the von Neumann algebraic setting is that a
	twist over an \'etale groupoid is not necessarily represented by a groupoid
	$2$-cocycle.
	Therefore, the role played by the cohomology class of a $2$-cocycle in the work of Feldman and Moore must be replaced by that of a twist.
	Part of this paper is devoted to developing the necessary cohomological framework for twists.
	
	By Theorem \ref{thmintro: C*-analogue of Feldman},
	we completely determine the structure of the short exact sequence
	\[
	\Aut_{C_0(G^{(0)})}(C^*_r(\Sigma;G))\to \Aut(C^*_r(\Sigma;G); C_0(G^{(0)}))\to \mathfrak{W}_{C^*_r(\Sigma; G), C_0(G^{(0)})}
	\]
	in terms of the underlying twisted groupoid.
	In particular,
	it follows from Theorem \ref{thmintro: C*-analogue of Feldman} that
	\[
	\Aut_{C_0(G^{(0)})}(C^*_r(\Sigma;G))\simeq Z(G).
	\]
	Thus,
	the group of Cartan-fixing automorphisms $\Aut_{C_0(G^{(0)})}(C^*_r(\Sigma;G))$ is an abelian group and depends only on the underlying \'etale groupoid $G$,
	but not on the twist $\Sigma$.
	Using this description,
	we investigate which abelian groups can arise as Cartan-fixing automorphism groups.
	Our next main theorem (Theorem \ref{thmintro:realization theorem}) shows that,
	for every Kirchberg algebra satisfying the UCT,
	one can choose a Cartan subalgebra whose Cartan-fixing automorphism group contains every second countable compact abelian group.
	Moreover,
	this realization can be arranged so that the corresponding fixed point algebras remain Kirchberg algebras satisfying the UCT.
	
	\begin{thmintro}[{Theorem \ref{Theorem: existence of universal Cartan subalgebra of Kirchberg algebra}}]\label{thmintro:realization theorem}
		Let $A$ be a unital Kirchberg algebra satisfying the UCT.
		Then there exists a Cartan subalgebra $D\subset A$ whose Gelfand spectrum is a Cantor space with the following property:
		for any second countable compact abelian group $H$,
		there exists a continuous injective homomorphism $\tau\colon H\hookrightarrow  \Aut_D(A)$ such that the following conditions hold:		
		\begin{enumerate}
			\item the fixed point algebra $A^{\tau}$ is a unital Kirchberg algebra satisfying the UCT,
			\item $D\subset A^{\tau}$,
			\item $\tau(H)=\Aut_{A^\tau}(A)$.
		\end{enumerate} 
		In particular,
		$H$ is isomorphic to $\Aut_{A^\tau}(A)$ via $\tau$.
	\end{thmintro}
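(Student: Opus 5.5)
The plan is to realize $A$ as a twisted (in fact untwisted) groupoid C*-algebra $C^*_r(G)$ of a product groupoid $G = G_0 \times \mathcal{R}$ (or a skew-product type construction). Here $G_0$ is an ample, effective, minimal, purely infinite, amenable groupoid on a Cantor space with $C^*_r(G_0)$ a UCT Kirchberg algebra. The second factor is chosen so that the cocycle group $Z(G)$ contains a "universal" compact abelian piece. By Theorem \ref{thmintro: C*-analogue of Feldman} we have $\Aut_{C_0(G^{(0)})}(C^*_r(G))\simeq Z(G)$, and a continuous $1$-cocycle $c\colon G\to \T$ acts by $f\mapsto cf$ on $C_c(G)$. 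So a continuous homomorphism $\tau\colon H\to\Aut_D(A)$ is the same as a continuous homomorphism $\widehat{H}\ni$-dual datum, i.e.\ a cocycle $c\colon G\to \widehat{\widehat H}$-valued family. Concretely, giving $\tau$ is equivalent to giving a continuous groupoid homomorphism $\rho\colon G\to \widehat H$ (discrete countable group), via $\tau_h(f)(g)=\langle h,\rho(g)\rangle f(g)$. Injectivity of $\tau$ amounts to the image of $\rho$ generating $\widehat H$. The fixed point algebra is then $A^\tau=C^*_r(\rho^{-1}(0))$, and $D\subset A^\tau$ automatically.

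Thus the key construction is a single ample groupoid $G$ with $C^*_r(G)\cong A$ such that, for every countable abelian group $\Gamma$, there is a continuous surjective homomorphism $\rho_\Gamma\colon G\to\Gamma$ whose kernel groupoid $\rho_\Gamma^{-1}(0)$ is still minimal, purely infinite, effective and amenable. Since every countable abelian group is a quotient of $\bigoplus_{\N}\Z$, it suffices to find $G$ with a surjection $\rho\colon G\to\bigoplus_\N\Z$ such that every composite $G\to\bigoplus_\N\Z\to\Gamma$ has kernel of the required kind. I would take $G=G_0\times(X\rtimes\bigoplus_\N\Z)$ with $\rho$ the projection onto the group coordinate. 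Here $X\rtimes \bigoplus_\N\Z$ is a free minimal action on a Cantor set all of whose restrictions to subgroups remain minimal, e.g.\ an odometer-type action on $\prod_n \Z_p$. The Kirchberg--Phillips classification then requires $K_*(C^*_r(G))\cong K_*(A)$ with the unit class matched. I would handle this by choosing $G_0$ via the known realization of UCT Kirchberg algebras as groupoid C*-algebras (Katsura/Li type models). I would also use the fact that the second factor is AF-like with trivializable $K$-theory after tensoring with $\mathcal O_2$-free Künneth corrections: choose the second factor to be an AF groupoid (kernel of an odometer-type homomorphism onto $\bigoplus\Z$ realized as a tail-equivalence with a cocycle) whose C*-algebra is a UHF algebra of infinite type. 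Tensoring with it then does not change $K$-theory after suitable choice of $G_0$. The kernels are then products of $G_0$ with AF groupoids, hence again Kirchberg with UCT, by nuclearity, amenability and pure infiniteness being inherited from $G_0$ together with simplicity from minimality and effectiveness.

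Finally, for (3) I would apply Galois correspondence for compact abelian group actions: the action $\tau$ is faithful with $A^\tau$ simple and $A$ simple, hence saturated/of full spectrum. The known result (e.g.\ Izumi--Longo--Popa type for compact groups, or a direct spectral-subspace argument) then gives that $\Aut_{A^\tau}(A)=\tau(H)$. One checks that every spectral subspace $A_\gamma$ contains a unitary-like element; here each $A_\gamma=C^*_r(\rho^{-1}(\gamma))$ contains partial isometries supported on bisections, using purely infinite minimality. An automorphism fixing $A^\tau$ then fixes $D$, hence comes from a cocycle by Theorem \ref{thmintro: C*-analogue of Feldman}, and that cocycle must be constant on $\rho$-fibres, so it factors through $\Gamma$ and lies in $\tau(H)$. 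I expect the main obstacle to be the simultaneous requirement that $C^*_r(G)\cong A$ has prescribed $K$-theory while all kernels $\rho_\Gamma^{-1}(0)$ remain minimal; the first thing I would try is making the $\bigoplus\Z$-part contribute a UHF factor of infinite type and absorbing it via $A\otimes M_{\mathfrak Q}$ adjustments chosen at the level of $G_0$.
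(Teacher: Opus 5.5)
\textbf{Verdict: there is a genuine gap, namely the identification $C^*_r(G)\cong A$.}

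The parts of your plan that work are these. Reducing $\tau$ to a cocycle $\rho\colon G\to\widehat H$ is correct. Your closing argument for (3) is also sound: an automorphism fixing $A^\tau\supseteq D$ is given by some $c\in Z(G)$ that is trivial on $\ker\rho$, and topological transitivity of $\ker\rho$ makes $c$ factor through a character of $\Gamma$. Izumi--Longo--Popa is not needed for this.

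The gap is the one you flagged yourself: producing a single groupoid $G$ with $C^*_r(G)\cong A$ that carries all these cocycles. Neither of your proposed second factors achieves this.
\begin{enumerate}
\item The crossed product $C(X)\rtimes\bigoplus_{\N}\Z$ of an odometer-type action is not UHF. Already a single $\Z$-odometer gives a Bunce--Deddens algebra, which has $K_1\cong\Z$.
\item Suppose instead the second factor were a UHF algebra $M_{\mathfrak Q}$ of infinite type. Then $K_*(C^*_r(G_0)\otimes M_{\mathfrak Q})\cong K_*(C^*_r(G_0))\otimes K_0(M_{\mathfrak Q})$, which is a $\Z[1/p]$-module for any prime $p$ occurring in $\mathfrak Q$. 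So no choice of $G_0$ can produce, for example, $A=\mathcal{O}_\infty$, whose $K_0$ is $\Z$.
\end{enumerate}
Separately, asking that every subgroup of $\bigoplus_{\N}\Z$ act minimally is impossible. The trivial subgroup fails, and so does any finite-index subgroup whose closure in a profinite $X$ is a proper open subgroup. At best you could choose the surjections $\bigoplus_{\N}\Z\to\Gamma$ so that their kernels are dense.

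\textbf{The missing idea} is to tensor with a factor that is invisible to classification, namely $\mathcal{O}_\infty$. Kirchberg's absorption theorem gives $A\otimes\mathcal{O}_\infty\cong A$ with no $K$-theory bookkeeping at all. The paper proceeds as follows.
\begin{enumerate}
\item Take a Cartan subalgebra $\widetilde{D}\subset A$ with Cantor spectrum (Li--Renault) and set $D=\widetilde{D}\otimes D_\infty$.
\item Put all the flexibility in the standard groupoid $G_\infty=P_\infty\ltimes X$. Because $P_\infty$ has infinitely many generators, a surjection $t\colon\N\setminus\{0\}\to\Gamma=\widehat H$ defines a surjective cocycle $[s_\mu s_\nu^*,x]\mapsto t_\mu t_\nu^{-1}$.
\item The kernel of this cocycle is still minimal and locally contracting. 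Inside any basic set $C_{\mu,\mu F}$ one can append a fresh letter $j$, followed by a letter $i$ chosen so that $t_i$ cancels the accumulated label.
\item Hence $\mathcal{O}_\infty^{\widetilde{\tau}}=C^*_r(\ker\sigma)$ is a UCT Kirchberg algebra containing $D_\infty$, and $\widetilde{\tau}(H)=\Aut_{\mathcal{O}_\infty^{\widetilde{\tau}}}(\mathcal{O}_\infty)$.
\item Pass to $A$ via $\tau=\id_A\otimes\widetilde{\tau}$. This works because $\psi\mapsto\id_A\otimes\psi$ identifies $\Aut_C(B)$ with $\Aut_{A\otimes C}(A\otimes B)$ whenever $A$ has trivial center, by the Archbold--Batty relative commutant theorem.
\end{enumerate}
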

	
	Our next main theorem shows that this phenomenon does not occur for arbitrary Cartan subalgebras.
	
	\begin{thmintro}[{Theorem \ref{theorem every compact subgroup of Cartan fixing are fin gen}}]
		Let $G$ be a locally compact Hausdorff expansive effective \'etale groupoid.
		Assume that $H\subset \Aut_{C_0(G^{(0)})}(C^*_r(G))$ is compact and the fixed point subalgebra $C^*_r(G)^H$ is prime.
		Then $\widehat{H}$ is finitely generated.
	\end{thmintro}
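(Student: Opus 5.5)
Via Theorem \ref{thmintro: C*-analogue of Feldman}, $\Aut_{C_0(G^{(0)})}(C^*_r(G))\simeq Z(G)$, where a continuous $1$-cocycle $c\colon G\to\T$ acts by $f\mapsto cf$ on $C_c(G)$. So a compact subgroup $H$ is a compact group of cocycles, with the topology of pointwise convergence on $C^*_r(G)$, which on $C_c(G)$ amounts to locally uniform convergence of cocycles. Each $g\in G$ gives a continuous character $\chi_g\colon H\to\T$, $\chi_g(c)=c(g)$. The plan is to show that the subgroup $\Gamma\subset\widehat H$ generated by $\{\chi_g\}_{g\in G}$ is all of $\widehat H$, and that it is finitely generated because of expansivity.

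\emph{Step 1 (the $\chi_g$ generate $\widehat H$).} For $\gamma\in\widehat H$, the spectral subspace $A_\gamma=\{a: \tau_c(a)=\gamma(c)a\}$ is spanned, after applying the conditional expectation $E_\gamma(a)=\int_H\overline{\gamma(c)}\tau_c(a)\,dc$ to $C_c(G)$, by elements $E_\gamma(f)(g)=f(g)\int_H\overline{\gamma(c)}c(g)\,dc$. By orthogonality of characters this is nonzero only if $\gamma=\chi_g$ for some $g\in\supp f$. So the spectral subspaces are nonzero exactly for $\gamma\in\{\chi_g\}$, which is therefore closed under products and inverses. Since $H$ acts faithfully, the spectrum is dense, hence (discrete dual) equal to $\widehat H$. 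Primeness of $C^*_r(G)^H$ is used here to ensure the Arveson spectrum is a group: if $A_\gamma\neq0$ and $A_{\gamma'}\ne0$ then $A_\gamma A_{\gamma'}\neq 0$, via $A_\gamma^*A_\gamma$ and $A_{\gamma'}A_{\gamma'}^*$ being nonzero ideals-generating subsets of the prime fixed point algebra. In the groupoid picture this is consistent anyway, since $\chi_{gh}=\chi_g\chi_h$.

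\emph{Step 2 (finite generation from expansivity).} Each $\chi$ is continuous on $H$, and the map $g\mapsto\chi_g$ is locally constant on $G$. Indeed, $H$ compact and continuity of the action give equicontinuity: for $g$ near $g_0$, $\sup_{c\in H}|c(g)-c(g_0)|$ is small, and two characters that are uniformly close are equal. Hence on each compact open bisection the map takes finitely many values. Expansivity should give a compact generating set: I expect the paper to have earlier characterized expansive groupoids as those with a compact open generating set $K\subset G$ (every element a product of elements of $K$ and inverses) on a compact unit space, analogous to expansive subshifts being generated by a finite partition. Then $\Gamma$ is generated by the finitely many values $\chi_g$, $g\in K$.

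The main obstacle is Step 2's use of expansivity. I need the precise statement that expansivity yields a compact generating subset, or at least that the locally constant homomorphism $G\to\widehat H$ factors through a finitely generated quotient. If expansivity is instead defined via a generator $\mathcal U$ of $G$ (a compact open set whose iterates separate points), I would argue as follows. The kernel groupoid $\{g:\chi_g=1\}$ is clopen, and the induced map sends the compact generating set onto finitely many characters, whose generated subgroup is all of $\widehat H$ by Step 1. Primeness may also be needed if $G^{(0)}$ is noncompact, to reduce to a compact piece via minimality-type arguments.
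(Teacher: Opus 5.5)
Your proposal is correct and follows the paper's overall plan. You view $H$ as a compact group of cocycles, encode it as the continuous cocycle $\chi\colon G\to\widehat H$, $\chi(g)=\chi_g$, prove that $\chi$ is surjective, and finish with what is exactly Proposition \ref{prop: image of expansive groupoid is finitely generated}. The paper defines expansivity as the existence of a finite $F\subset\Bis^c(G)$ whose generated inverse semigroup is a basis. With that definition, $K=\bigcup F$ is exactly the compact open generating set you wanted, so your hedges about other definitions, non-compact unit spaces or minimality arguments can be dropped.

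The genuine difference is in the surjectivity step. The paper moves to the groupoid:
\begin{itemize}
\item it identifies $C^*_r(G)^H=C^*_r(\ker\chi)$ via \cite[Proposition 3.3.2]{Komura2025Weyl};
\item it turns primeness into topological transitivity of $\ker\chi$ (Corollary \ref{cor: prime is equivalent to topologically transitive});
\item it uses \cite[Proposition 3.3.3]{Komura2025Weyl} (cf.\ Proposition \ref{prop: injectiveness of sigma is surjectivenss of hat sigma}): a cocycle with topologically transitive kernel has a subgroup as its image, and faithfulness forces that subgroup to be $\widehat H$.
\end{itemize}
You stay inside the C*-algebra instead. The Arveson spectrum is $\{\chi_g\}$, primeness of $A^H$ makes it multiplicatively closed, and faithfulness makes it everything. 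Your product argument is right once spelled out: for nonzero $a\in A_\gamma$ and $b\in A_{\gamma'}$, primeness gives $x\in A^H$ with $a^*a\,x\,bb^*\neq0$, so $0\neq(ax)b\in A_{\gamma\gamma'}$.

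Your version is self-contained; it is the standard fact that a compact abelian action with prime fixed point algebra has a group as spectrum. The paper's version is shorter given its citations, and it isolates a groupoid statement that it reuses in Proposition \ref{prop: Hperp is top transitive and G is expansive, Hhat is fin gen}. Similarly, your equicontinuity argument for local constancy of $g\mapsto\chi_g$ takes the place of Proposition \ref{prop: Hom(H,Z(G)) is Z(G, hatH)}.

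One further point. Both arguments use primeness only to upgrade ``the $\chi_g$ generate $\widehat H$'' to ``$\{\chi_g\}_{g\in G}=\widehat H$''. The final step needs only the former, and that follows from faithfulness alone. The annihilator in $H$ of the subgroup generated by the $\chi_g$ is $\{c\in H\mid c(g)=1 \text{ for all } g\in G\}=\{1\}$. Subgroups of the discrete group $\widehat H$ are closed, so Pontryagin duality shows that this subgroup is all of $\widehat H$.
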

	
	In the above situation,
	we also prove that the Pontryagin dual of $\Aut_B(C^*_r(G))$ is finitely generated for a prime intermediate subalgebra $C_0(G^{(0)})\subset B\subset C^*_r(G)$ whenever  $\Aut_B(C^*_r(G))$ is compact (Corollary \ref{cor: Gal group of prime subalgebra is fin gen}).
	In contrast to Theorem \ref{thmintro:realization theorem},
	these results show that expansiveness imposes a strong finiteness condition on
	Cartan-fixing automorphism groups.
	
	As an application,
	we use Cartan-fixing automorphism groups to distinguish Cartan subalgebras and thereby obtain the existence of inequivalent Cartan subalgebras (Corollary \ref{cor: inequivalent Cartan}).
	We remark that the non-uniqueness of Cartan subalgebras itself is not new (see \cite[Proposition 5.7]{XinLiRenaultCartanexistence}).
	However,
	the techniques used in their proof differ from those employed in ours.
	In \cite[Proposition 5.7]{XinLiRenaultCartanexistence},
	the authors distinguished Cartan subalgebras by examining the ranks of isotropy groups arising from their groupoid models.
	In contrast, we distinguish Cartan subalgebras via the automorphism groups that fix them.

	This paper is organized as follows.
	In Section 1,
	we recall basic notions and preliminaries concerning twisted groupoids and their associated C*-algebras.
	In Section 2,
	we investigate automorphism groups of twisted groupoids.
	Then we establish a C*-algebraic analogue of the Feldman--Moore theorem (Corollary \ref{cor: C*-analogue of Feldman moore}).
	In Section 3,
	we investigate realization and obstruction problems for Cartan-fixing automorphism groups.
	Then we show the existence of inequivalent Cartan subalgebras.
	
	\begin{acknowledge}
		The author would like to thank Yuki Arano for fruitful discussions.
		This work was supported by JSPS KAKENHI Grant Number JP26K17000.
	\end{acknowledge}
	\section{Preliminaries}
	
	In this section,
	we recall fundamental facts about twisted groupoid C*-algebras.
	We refer the reader to \cite{asims}, \cite{paterson2012groupoids}, \cite{renault1980groupoid} and \cite{renault} for further details.
	
	\subsection{\'Etale groupoids}
	
	We recall the basic notions concerning \'etale groupoids.
	See \cite{asims} and \cite{paterson2012groupoids} for more details.
	
	A groupoid is a set $G$ together with a distinguished subset $G^{(0)}\subset G$,
	domain and range maps $d,r\colon G\to G^{(0)}$ and a product 
	\[
	G^{(2)}\defeq \{(\alpha,\beta)\in G\times G\mid d(\alpha)=r(\beta)\}\ni (\alpha,\beta)\mapsto \alpha\beta \in G
	\]
	such that
	\begin{enumerate}
		\item for all $x\in G^{(0)}$, $d(x)=x$ and $r(x)=x$ hold,
		\item for all $\alpha\in G$, $\alpha d(\alpha)=r(\alpha)\alpha=\alpha$ holds,
		\item for all $(\alpha,\beta)\in G^{(2)}$, $d(\alpha\beta)=d(\beta)$ and $r(\alpha\beta)=r(\alpha)$ hold,
		\item if $(\alpha,\beta),(\beta,\gamma)\in G^{(2)}$,
		we have $(\alpha\beta)\gamma=\alpha(\beta\gamma)$,
		\item\label{inverse} every $\gamma \in G$,
		there exists $\gamma'\in G$ which satisfies $(\gamma',\gamma), (\gamma,\gamma')\in G^{(2)}$ and $d(\gamma)=\gamma'\gamma$ and $r(\gamma)=\gamma\gamma'$.   
	\end{enumerate}
	Since the element $\gamma'$ in (\ref{inverse}) is uniquely determined by $\gamma$,
	$\gamma'$ is called the inverse of $\gamma$ and denoted by $\gamma^{-1}$.
	We call $G^{(0)}$ the unit space of $G$.
	A subgroupoid of $G$ is a subset of $G$ which is closed under the inversion and product. 
	For $U\subset G^{(0)}$, we define $G_U\defeq d^{-1}(U)$ and $G^{U}\defeq r^{-1}(U)$.
	We define also $G_x\defeq G_{\{x\}}$ and $G^x\defeq G^{\{x\}}$ for $x\in G^{(0)}$.
	A subset $F\subset G^{(0)}$ is said to be invariant if $d(\alpha)\in F$ implies $r(\alpha)\in F$ for all $\alpha\in G$.
	If $F\subset G^{(0)}$ is invariant,
	$G_F\subset G$ is a subgroupoid and the unit space of $G_F$ is $F$.
	
	A topological groupoid is a groupoid equipped with a topology where the product and the inverse are continuous.
	A topological groupoid is said to be \'etale if the domain map is a local homeomorphism.
	Note that the range map of an \'etale groupoid is also a local homeomorphism.
	In this paper,
	we assume that all topological groupoids are Hausdorff unless otherwise stated,
	although important examples of non-Hausdorff groupoids exist.
			
	A subset $U$ of an \'etale groupoid $G$ is called a bisection if the restrictions $d|_U$ and $r|_U$ are injective.
	It follows that $d|_U$ and $r|_U$ are homeomorphisms onto their images if $U$ is a bisection since $d$ and $r$ are open maps.
	We let $\Bis(G)$ denote the set of open bisections of $G$.
	Note that $\Bis(G)$ is a basis of $G$ since $G$ is \'etale.
	An \'etale groupoid $G$ is said to be ample if $G$ has a basis of compact open bisections.
	
	An \'etale groupoid $G$ is said to be effective if $G^{(0)}$ coincides with the interior of $\Iso(G)$,
	where 
	\[
	\Iso(G)\defeq\{\alpha\in G\colon d(\alpha)=r(\alpha)\}
	\]
	is the isotropy of $G$.
	An \'etale groupoid $G$ is said to be topologically principal if 
	\[
	\{x\in G^{(0)}\mid G_x\cap G^x=\{x\}\}
	\]
	is dense in $G^{(0)}$.
	If $G$ is topologically principal,
	then $G$ is effective.
	If $G$ is second countable and effective,
	then $G$ is topologically principal (see \cite[Proposition 3.6]{renault}).
	
	An \'etale groupoid $G$ is said to be topologically transitive if $r(d^{-1}(U))$ is dense in $G^{(0)}$ for all non-empty open set $U\subset G^{(0)}$.
	Equivalently,
	$G$ is topologically transitive if and only if each non-empty open invariant subset $U\subset G^{(0)}$ is dense in $G^{(0)}$.
	If there exists $x\in G^{(0)}$ such that $r(d^{-1}(\{x\}))\subset G^{(0)}$ is dense,
	then $G$ is topologically transitive.
	The converse is true if $G$ is second countable by \cite[Lemma 3.4]{STEINBERG20192474}.
	An \'etale groupoid $G$ is said to be minimal if $G^{(0)}$ has no nontrivial
	closed invariant subsets.
	
	Following \cite[Definition 2.1]{AnatharamanDelapurelyinfinite},
	we define the notion of locally contracting groupoids.
	An \'etale groupoid $G$ is said to be locally contracting if,
	for every nonempty open subset $U\subset G^{(0)}$,
	there exist an open subset $V\subset U$ and an open bisection $S\subset G$ such that $\overline{V}\subset d(S)$ and $r(S\overline{V})\subsetneq V$.

	\subsection{Cocycles on \'etale groupoids}
	
	We recall some notions concerning cocycles on \'etale groupoids.
	See \cite[Section I.1]{renault1980groupoid} or \cite[Subsection 2.3]{Armstrong_Brownlowe_Sims_simplicity_of_twistedDeaconu} for further details.
	
	\subsubsection{A topological group of 1-cocycles}
	
	Let $G$ be a topological groupoid and $H$ be a topological abelian group.
	A groupoid homomorphism $c\colon G\to H$ is called a $H$-valued $1$-cocycle.
	We let $Z(G,H)$ denote the set of all continuous 1-cocycles $c\colon G\to H$.
	When $H=\T$,
	we simply write $Z(G)\defeq Z(G,\T)$.
	Then $Z(G,H)$ is an abelian group with respect to the pointwise product.
	In addition,
	$Z(G, H)$ is a topological group with respect to the compact-open topology,
	which is the topology generated by the subsets
	\[
	[K,U]\defeq \{c\in Z(G,H)\mid c(K)\subset U\}
	\]
	for all compact sets $K\subset G$ and open sets $U\subset H$ (see \cite[Appendix A]{Hatcher_alg_top}).
	For completeness, we show that $Z(G,H)$ is a topological group.
	We first recall some general results on the compact-open topology.
	Let $C(X, Y)$ denote the space of continuous maps from $X$ to $Y$ equipped with the compact-open topology.
	\begin{prop}\label{prop: lemma for continuity of the product of C(X,H)}
		Let $X, Y, Z,W$ be Hausdorff spaces.
		Assume that $X$ and $Y$ are locally compact.
		Then
		\[
		C(X,Z)\times C(Y,W)\ni (f,g)\mapsto f\times g\in C(X\times Y, Z\times W)
		\]
		is continuous,
		where $f\times g(x,y)\defeq (f(x), g(y))$ for $(x,y)\in X\times Y$.
	\end{prop}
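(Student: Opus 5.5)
Since the compact-open topology on $C(X\times Y,Z\times W)$ is generated by the subbasic sets $[K,U]$ with $K\subset X\times Y$ compact and $U\subset Z\times W$ open, it suffices to fix such a pair and a point $(f_0,g_0)$ with $f_0\times g_0\in[K,U]$, and find neighbourhoods $\mathcal{N}_f$ of $f_0$ in $C(X,Z)$ and $\mathcal{N}_g$ of $g_0$ in $C(Y,W)$ with $f\times g\in[K,U]$ for all $f\in\mathcal{N}_f$, $g\in\mathcal{N}_g$.

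First replace $K$ by a product of compacta. Let $p_X,p_Y$ be the coordinate projections and put $K_X\defeq p_X(K)$, $K_Y\defeq p_Y(K)$; these are compact and $K\subset K_X\times K_Y$. For each $k=(x,y)\in K$ we have $(f_0(x),g_0(y))\in U$, so there are open $A_k\subset Z$, $B_k\subset W$ with $f_0(x)\in A_k$, $g_0(y)\in B_k$ and $A_k\times B_k\subset U$. By continuity of $f_0,g_0$ and local compactness of $X,Y$, pick compact neighbourhoods $C_k$ of $x$ in $X$ and $D_k$ of $y$ in $Y$ with $f_0(C_k)\subset A_k$ and $g_0(D_k)\subset B_k$. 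The interiors $\operatorname{int}(C_k)\times\operatorname{int}(D_k)$ cover the compact set $K$, so finitely many indices $k_1,\dots,k_n$ suffice.

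Now set
\[
\mathcal{N}_f\defeq\bigcap_{i=1}^n[C_{k_i},A_{k_i}],\qquad \mathcal{N}_g\defeq\bigcap_{i=1}^n[D_{k_i},B_{k_i}].
\]
These are open neighbourhoods of $f_0$ and $g_0$ respectively. Take $f\in\mathcal{N}_f$, $g\in\mathcal{N}_g$ and $(x,y)\in K$. Then $(x,y)\in C_{k_i}\times D_{k_i}$ for some $i$, so $f(x)\in A_{k_i}$ and $g(y)\in B_{k_i}$. Hence $(f(x),g(y))\in A_{k_i}\times B_{k_i}\subset U$, and so $f\times g\in[K,U]$. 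It remains to note that $f\times g$ is continuous, which is clear.

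The only delicate point is that $U$ need not be a product open set. This is handled by the tube/compactness covering argument above, where local compactness supplies the compact neighbourhoods $C_k,D_k$ used to build subbasic sets. In fact the construction uses compactness of $K$ only, so local compactness is needed only for this choice of compact neighbourhoods, and Hausdorffness plays no essential role.
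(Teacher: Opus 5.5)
Your proof is correct and is essentially the paper's argument: for each point of $K$ you choose a product neighbourhood $A_k\times B_k\subset U$, pick compact neighbourhoods $C_k,D_k$ mapped into $A_k,B_k$, extract a finite subcover of $K$, and intersect the corresponding subbasic sets $[C_{k_i},A_{k_i}]$ and $[D_{k_i},B_{k_i}]$. Two small remarks: your sets $K_X,K_Y$ are never used, and Hausdorffness of $X,Y$ does play a role, since under the usual definition of local compactness it is what lets you choose compact neighbourhoods inside $f_0^{-1}(A_k)$ and $g_0^{-1}(B_k)$ (only the Hausdorffness of $Z,W$ is irrelevant).
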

	
	\begin{proof}
		Consider a compact set $M\subset X\times Y$,
		an open set $O\subset Z\times W$ and $(f,g)\in C(X,Z)\times C(Y,W)$ with $f\times g\in [M,O]$.
		For $p=(x,y)\in M$,
		there exist open sets $U_p\subset Z$ and $V_p\subset W$ such that $(f(x),g(y))\in U_p\times V_p\subset O$.
		Using the assumption that $X$ and $Y$ are locally compact,
		take compact neighborhoods $K_p$ and $L_p$ of $x$ and $y$ respectively
		so that $x\in K_p\subset f^{-1}(U_p)$ and $y\in L_p\subset g^{-1}(V_p)$.
		Then there exists a finite cover $\{K_p\times L_p\}_{p\in F}$ of $M$,
		where $F\subset M$ is a finite set.
		Put
		\[
		N\defeq \bigcap_{p\in F}[K_p, U_p]\times [L_p, V_p].
		\]
		Then $N$ is an open set of $C(X,Z)\times C(Y,W)$ with $(f,g)\in N$.
		For any $(f',g')\in N$,
		one can see $f'\times g'(M)\subset O$.
		Indeed,
		for any $q\in M$,
		there exists $p\in F$ such that $q\in K_p\times L_p$.
		Since $f'\times g'(K_p\times L_p)\subset U_p\times V_p\subset O$,
		we obtain $f'\times g'(q)\in O$.
		Thus we obtain $f'\times g'\in [M,O]$ and this proves the continuity of the map in the statement.
		\qed
	\end{proof}
	
	\begin{prop}
		Let $X$ be a locally compact Hausdorff space and $H$ be a topological group.
		Then $C(X,H)$ is a topological group with respect to the pointwise product.
	\end{prop}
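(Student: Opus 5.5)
We need to show that multiplication $C(X,H)\times C(X,H)\to C(X,H)$ and inversion $C(X,H)\to C(X,H)$ are continuous for the compact-open topology. The plan is to write both maps as composites of maps already known to be continuous, using Proposition \ref{prop: lemma for continuity of the product of C(X,H)} for the product part. Two standard facts about the compact-open topology will be used repeatedly.

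\begin{enumerate}
\item Post-composition with a fixed continuous map $\phi\colon Z\to W$ gives a continuous map $C(Y,Z)\to C(Y,W)$, $f\mapsto\phi\circ f$. This holds because the preimage of $[K,U]$ is $[K,\phi^{-1}(U)]$.
\item Pre-composition with a fixed continuous map $\psi\colon Y'\to Y$ gives a continuous map $C(Y,Z)\to C(Y',Z)$, $f\mapsto f\circ\psi$. This holds because the preimage of $[K,U]$ is $[\psi(K),U]$, and $\psi(K)$ is compact.
\end{enumerate}

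\textbf{Inversion.} Let $\iota\colon H\to H$ be the group inversion. Then $f\mapsto f^{-1}=\iota\circ f$ is continuous by fact (1).

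\textbf{Multiplication.} Let $\mu\colon H\times H\to H$ be the group multiplication and $\Delta\colon X\to X\times X$ the diagonal map. Then
\[
fg=\mu\circ(f\times g)\circ\Delta .
\]
So multiplication factors as
\[
C(X,H)\times C(X,H)\to C(X\times X,H\times H)\to C(X\times X,H)\to C(X,H).
\]
The first map is continuous by Proposition \ref{prop: lemma for continuity of the product of C(X,H)}, applied with $Y=X$ and $Z=W=H$; this is where local compactness of $X$ is needed. The second map is post-composition with $\mu$, which is continuous by fact (1). The third map is pre-composition with $\Delta$, which is continuous by fact (2).

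\textbf{Hausdorff hypothesis.} The earlier proposition assumes $Z$ and $W$ are Hausdorff, so applying it with $Z=W=H$ requires $H$ to be Hausdorff. The paper's standing convention makes all topological groups Hausdorff, but this should be checked. Alternatively, the proof of Proposition \ref{prop: lemma for continuity of the product of C(X,H)} never uses the Hausdorffness of $Z$ and $W$: it only uses the product topology and local compactness of $X$ and $Y$. So the argument goes through for arbitrary $H$.

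The group axioms hold pointwise, so $C(X,H)$ is a group, and together with the two continuity statements it is a topological group. The only delicate step is the continuity of $(f,g)\mapsto f\times g$, and that is exactly what Proposition \ref{prop: lemma for continuity of the product of C(X,H)} supplies.
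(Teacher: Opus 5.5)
Your proof is correct and follows the paper's argument exactly: inversion is handled by post-composition with the group inversion, and multiplication is factored as $(f,g)\mapsto f\times g$ (Proposition \ref{prop: lemma for continuity of the product of C(X,H)}), then post-composition with $\mu$, then pre-composition with the diagonal. Your side remark is also accurate: the proof of that proposition never uses Hausdorffness of $Z$ and $W$, so the argument covers a possibly non-Hausdorff $H$ as the statement allows.
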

	
	\begin{proof}
		It is straightforward to see that $C(X,H)$ is a group.
		The inverse of $C(X, H)$ is continuous since we have
		\[
		[K,U]^{-1}=[K, U^{-1}]
		\]
		for a compact set $K\subset X$ and an open set $U\subset H$.
		We show the continuity of the product of $C(X,H)$.
		Let $m\colon H\times H\to H$ denote the product of $H$.
		Then the map
		\[
		m_*\colon C(X\times X, H\times H)\ni h\mapsto m\circ h\in C(X\times X, H)
		\]
		is continuous,
		since we have
		\[
		m_*^{-1}([K,U])=[K, m^{-1}(U)]
		\]
		for all compact set $K\subset X\times X$ and an open set $U\subset H$.
		Define a continuous map $\iota\colon X\to X\times X$ by $\iota(x)=(x,x)$.
		Then the map
		\[
		\iota^*\colon C(X\times X, H)\ni h\mapsto h\circ \iota\in C(X,H)
		\]
		is also continuous,
		since we have
		\[
		(\iota^*)^{-1}([K,U])=[\iota(K),U]
		\]
		for all compact set $K\subset X$ and an open set $U\subset H$.
		Since the product of $C(X,H)$ is the composition map of
		\begin{align*}
			C(X, H)\times C(X,H)\ni (f,g)&\mapsto f\times g\in C(X\times X, H\times H), \\
			C(X\times X, H\times H)\ni h&\mapsto m_*(h)\in C(X\times X, H),\\
			C(X\times X, H)\ni h&\mapsto \iota^*(h)\in C(X,H)
		\end{align*}
		and these maps are continuous by Proposition \ref{prop: lemma for continuity of the product of C(X,H)} and the above argument,
		the product of $C(X,H)$ is continuous.
		Therefore $C(X,H)$ is a topological group. 
		\qed
	\end{proof}
	
	\begin{cor}
	Let $G$ be a locally compact Hausdorff topological groupoid and $H$ be a topological abelian group.
	Then $Z(G,H)$ is a closed subgroup of $C(G,H)$.
	In particular,
	$Z(G,H)$ is a topological abelian group with respect to the compact-open topology.
	\end{cor}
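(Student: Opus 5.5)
Since the preceding proposition shows that $C(G,H)$ is a topological group under pointwise multiplication, it suffices to check two things: $Z(G,H)$ is a subgroup of $C(G,H)$, and it is closed in the compact-open topology. A subgroup of a topological group is a topological group with the subspace topology, and the subspace topology on $Z(G,H)$ is exactly the compact-open topology defined via the sets $[K,U]$. So the ``in particular'' clause follows at once.

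For the subgroup property I will use that $H$ is abelian. If $c_1,c_2\in Z(G,H)$ and $(\alpha,\beta)\in G^{(2)}$, then
\[
c_1c_2(\alpha\beta)=c_1(\alpha)c_1(\beta)c_2(\alpha)c_2(\beta)=c_1c_2(\alpha)\,c_1c_2(\beta).
\]
Similarly $c^{-1}$ is a homomorphism. The constant map $1$ lies in $Z(G,H)$.

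For closedness I will write $Z(G,H)$ as an intersection of closed sets indexed by composable pairs:
\[
Z(G,H)=\bigcap_{(\alpha,\beta)\in G^{(2)}}\{f\in C(G,H)\mid f(\alpha\beta)=f(\alpha)f(\beta)\}.
\]
Each set in this intersection is closed for the following reason. Evaluation $\mathrm{ev}_\gamma\colon C(G,H)\to H$, $f\mapsto f(\gamma)$, is continuous, because $\mathrm{ev}_\gamma^{-1}(U)=[\{\gamma\},U]$ and singletons are compact. Hence
\[
f\mapsto f(\alpha\beta)\bigl(f(\alpha)f(\beta)\bigr)^{-1}
\]
is continuous into $H$, using continuity of the operations in $H$. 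The set above is the preimage of $\{e\}$ under this map.

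The only delicate point is that $\{e\}$ must be closed in $H$, which requires $H$ to be $T_1$ (equivalently Hausdorff, for a topological group). This is implicit in the paper's standing Hausdorff convention, and it holds in the main case $H=\T$. If one wanted to avoid this assumption, one could argue with nets instead. But pointwise limits do not obviously preserve equalities without Hausdorffness, so I would simply invoke the Hausdorff assumption on $H$ here.
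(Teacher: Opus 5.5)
Your proof is correct and takes the same approach as the paper: check the subgroup property, then deduce closedness from continuity of evaluation. The paper cites joint continuity of evaluation from Hatcher, whereas you use the elementary pointwise version $\mathrm{ev}_\gamma^{-1}(U)=[\{\gamma\},U]$. Your remark that $H$ must be Hausdorff for $\{e\}$ to be closed is accurate, and the paper also needs it implicitly: its preceding proposition on $C(X,H)$ already relies on $H$ being Hausdorff.
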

	\begin{proof}
		One can check that $Z(G,H)$ is a subgroup of $C(G,H)$.
		Since the evaluation map on $C(G,H)$ is continuous by \cite[Proposition A.14(a)]{Hatcher_alg_top},
		$Z(G,H)$ is closed in $C(G,H)$.
		This completes the proof.
		\qed
	\end{proof}
	
	\subsubsection{2-cocycles}
	
	Let $G$ be a locally compact Hausdorff \'etale groupoid and $H$ be a topological abelian group.
	A $H$-valued 2-cocycle on $G$ is a map $\sigma\colon G^{(2)}\to H$ such that
	\[
	\sigma(\alpha, \beta)\sigma(\alpha\beta,\gamma)=\sigma(\beta,\gamma)\sigma(\alpha,\beta\gamma)
	\]
	for all $(\alpha,\beta),(\beta,\gamma)\in G^{(2)}$.
	A 2-cocycle $\sigma\colon G^{(2)}\to H$ is said to be normalized if
	\[
	\sigma(\alpha,d(\alpha))=\sigma(r(\alpha),\alpha)=1
	\]
	for all $\alpha\in G$.
	We denote the set of all continuous normalized $H$-valued 2-cocycles on $G$ by $Z^2(G,H)$.
	Then $Z^2(G,H)$ is an abelian group with respect to the pointwise product.
	We simply write $Z^2(G)\defeq Z^2(G,\T)$ when $H=\T$.
	We note that,
	if $\sigma\in Z(G,H)$,
	then $\sigma(\alpha,\alpha^{-1})=\sigma(\alpha^{-1},\alpha)$ for $\alpha\in G$.
	
	Let $b\colon G\to H$ be a continuous map.
	We define $\partial b\colon G^{(2)}\to H$ by
	\[
	\partial b(\alpha,\beta)\defeq b(\alpha)b(\beta)b(\alpha\beta)^{-1}
	\]
	for $(\alpha,\beta)\in G^{(2)}$.
	Then $\partial b$ is a continuous $H$-valued 2-cocycles.
	If $b$ is normalized in the sense that $b|_{G^{(0)}}=1_H$,
	then $\partial b$ is normalized.
	We say that normalized 2-cocycles $\sigma,\sigma'\in Z^2(G,H)$ are cohomologous if there exists a normalized continuous map $b\colon G\to H$ such that $\sigma\sigma'^{-1}=\partial b$.

	\subsection{Twisted groupoids}
	
	Let $G$ be an \'etale groupoid.
	A twist $\Sigma$ over $G$ is a sequence
	\[
	G^{(0)}\times\T\xlongrightarrow{\iota}\Sigma\xlongrightarrow{q}G,
	\]
	where $G^{(0)}\times \T$ is a trivial group bundle with fibers $\T$,
	$\Sigma$ is a topological groupoid and $\iota$ and $q$ are continuous groupoid homomorphism such that
	\begin{itemize}
		\item $\iota|_{G^{{(0)}}\times \{1\}}$ and $q|_{\Sigma^{(0)}}$ are homeomorphisms on the unit spaces.
		We identify $G^{(0)}\times \{1\}$ and $\Sigma^{(0)}$ with $G^{(0)}$.
		\item $q^{-1}(G^{(0)})=\iota(G^{(0)}\times \T)$.
		\item for every $\delta\in \Sigma$ and $z\in \T$,
		$\iota(r(\delta),z)\delta=\delta\iota(d(\delta),z)$.
		We simply write $z\delta\defeq\iota(r(\delta),z)\delta(=\delta\iota(d(\delta),z))$.
		\item $q\colon \Sigma\to G$ is locally trivial in the sense that for every $\alpha\in G$,
		there exists an open bisection $U\subset G$ with $\alpha\in U$ and a continuous map $\phi\colon U\to \Sigma$ such that $q\circ \phi=\id_U$ and the map $\T\times U\ni(z,\alpha)\mapsto z\phi(\alpha)$ is a homeomorphism onto $q^{-1}(U)$.
	\end{itemize}
	One can see that $q\colon \Sigma\to G$ is a quotient map in the sense that the topology of $G$ coincides with the final topology induced by $q$ (see \cite[Lemma 2.7]{Armstrong_uniqueness_theorem_for_twistedgroupoid}).
	Note that $\Sigma$ is a locally compact Hausdorff if $G$ is. 
	We often denote a twist $\Sigma$ over an \'etale groupoid $G$ by $(G,\Sigma)$.
	
	\begin{ex}
		Let $G$ be an \'etale groupoid and $\sigma\in Z^2(G)$ be a normalized 2-cocycle.
		We recall a twist $\Sigma_{\sigma}$ over $G$ associated with $\sigma$.
		See \cite[Example 11.1.5]{asims} for more details.
		Put $\Sigma_{\sigma}=G\times\T$ as a topological space.
		The unit space of $\Sigma_{\sigma}$ is $\Sigma_{\sigma}^{(0)}\defeq G^{(0)}\times \{1\}$,
		which is naturally identified with $G^{(0)}$.
		The domain and range maps are defined by
		\[
		d(\alpha,z)\defeq d(\alpha), r(\alpha,z)\defeq r(\alpha)
		\]
		for $(\alpha,z)\in \Sigma_\sigma$ respectively.
		The product of $\Sigma_{\sigma}$ is defined as
		\[
		(\alpha,z)(\beta,w)\defeq (\alpha\beta,\sigma(\alpha,\beta)zw)
		\]
		for $(\alpha,z),(\beta,w)\in \Sigma_{\sigma}$ with $d(\alpha)=r(\alpha)$.
		The inversion of $(\alpha,z)\in \Sigma_{\sigma}$ is
		\[
		(\alpha,z)^{-1}=(\alpha^{-1}, \overline{\sigma(\alpha,\alpha^{-1})}\overline{z}).
		\]
		The inclusion $G^{(0)}\times \T\subset \Sigma_{\sigma}$ is defined in the obvious way and the quotient map is defined as
		\[
		q\colon \Sigma_{\sigma}\ni (\alpha,z)\mapsto \alpha\in G.
		\]
		Then $\Sigma_{\sigma}$ is a twist over $G$.
		Note that $\Sigma_{\sigma}$ has a global continuous section
		\[
		G\ni \alpha\mapsto (\alpha,1)\in\Sigma_{\sigma}
		\]
		for $q$.
		Conversely,
		if a twist $\Sigma$ has a global continuous section $S\colon G\to \Sigma$ for the quotient map $q\colon \Sigma\to G$,
		then there exists a normalized continuous 2-cocycle $\sigma\in Z^2(G)$ such that $\Sigma\simeq \Sigma_{\sigma}$.
		Indeed,
		we can construct $\sigma\in Z^2(G)$ so that
		\[
		S(\alpha)S(\beta)=\sigma(\alpha,\beta)S(\alpha\beta)
		\]
		for all $(\alpha,\beta)\in G^{(2)}$.
		See \cite[Remark 11.1.6]{asims} for more details.
	\end{ex}
	
	Next,
	we recall the standard Haar system of a twisted groupoid $(G,\Sigma)$.
	For each $x\in G^{(0)}$ and $\alpha\in G^x$,
	fix $\delta\in q^{-1}(\{\alpha\})$.
	Then the map
	\[
	\T\ni z\mapsto z\delta\in q^{-1}(\{\alpha\})
	\]
	is a homeomorphism.
	Via this homeomorphism,
	we equip $q^{-1}(\alpha)$ with the Haar probability measure of $\T$.
	Since we have
	\[
	\Sigma^x=\coprod_{\alpha\in G^x}q^{-1}(\{\alpha\}),
	\]
	we may equip $\Sigma^x$ with the disjoint union measure $\mu^x$ with $\supp\mu^x=\Sigma^x$.
	In the similar way, we equip $\Sigma_x$ with a measure $\mu_x$ with $\supp\mu_x=\Sigma_x$.
	
	Next,
	we recall the definition of twisted groupoid C*-algebras.
	Let $(G,\Sigma)$ be a twisted groupoid over a locally compact Hausdorff \'etale groupoid $G$.
	Set
	\[
	C_c(\Sigma;G)\defeq\{f\in C_c(\Sigma)\mid \text{$f(z\delta)=zf(\delta)$ for all $z\in\T$ and $\delta\in\Sigma$}\}.
	\]
	Then $C_c(\Sigma;G)$ is a $\C$-vector space.
	The convolution of $f,g\in C_c(\Sigma;G)$ is defined as
	\[
	f*g(\delta)=\int_{\Sigma}f(\epsilon)g(\epsilon^{-1}\delta)d\mu^{r(\delta)}(\epsilon)
	\]
	for $\delta\in\Sigma$.
	This convolution is computed as
	\[
	f*g(\delta)=\sum_{\alpha\in G^{r(\delta)}}f(P(\alpha))g(P(\alpha)^{-1}\delta)
	\]
	for a section $P\colon G_{d(\delta)}\to \Sigma_{d(\delta)}$ of $q|_{\Sigma_{d(\delta)}}$.
	Note that the right hand side of this formulae is independent of the choice of $P$.
	The involution of $f\in C_c(\Sigma;G)$ is defined as $f^*(\delta)\defeq \overline{f(\delta^{-1})}$ for $\delta\in\Sigma$.
	When the twist is trivial (i.e., $\Sigma=G\times \T$),
	the *-algebra $C_c(\Sigma;G)$ coincides with the usual *-algebra $C_c(G)$ associated with $G$.
	For a function $f\colon \Sigma\to \C$,
	we let $\insupp(f)\defeq f^{-1}(\C\setminus\{0\})$ denote the open support and $\supp(f)\defeq \overline{\insupp(f)}$.
	Note that $C_c(G^{(0)})$ is identified with the subalgebra
	\[
	\{f\in C_c(\Sigma;G)\mid \supp(f)\subset \iota(G^{(0)}\times \T)\}
	\]
	of $C_c(\Sigma;G)$ via the isomorphism that carries $f\in C_c(G^{(0)})$ to $\widetilde{f}\in C_c(\Sigma;G)$ defined by $\widetilde{f}(x,z)=zf(x)$ for $(x,z)\in G^{(0)}\times \T$.
	
	For $x\in G^{(0)}$,
	let $L^2(\Sigma_x;G_x)\defeq L^2(\Sigma_x;G_x,\mu_x)$ denote the set of the square-integrable $\T$-equivariant functions on $\Sigma_x$.
	Then the left regular representation $\lambda_x\colon C_c(\Sigma;G)\to B(L^2(\Sigma_x;G_x))$ is defined by the convolution product.
	Namely,
	we have $\lambda_x(f)\xi=f*\xi$ for $f\in C_c(\Sigma;G)$ and $\xi\in L^2(\Sigma_x;G_x)$ although this is a slight abuse of notation.
	The reduced norm of $f\in C_c(\Sigma;G)$ is defined as
	\[
	\lVert f\rVert_r\defeq \sup_{x\in G^{(0)}}\lVert \lambda_x(f)\rVert
	\]
	and the twisted groupoid C*-algebra $C^*_r(\Sigma; G)$ is defined to be the completion of $C_c(\Sigma;G)$ by the reduced norm.
	Note that the inclusion $C_c(G^{(0)})\subset C_c(\Sigma;G)$ extends to the inclusion $C_0(G^{(0)})\subset C^*_r(\Sigma;G)$.
	In addition,
	there exists a faithful conditional expectation $E\colon C^*_r(\Sigma;G)\to C_0(G^{(0)})$ onto $C_0(G^{(0)})$ which extends the restriction
	\[
	C_c(\Sigma; G)\ni f\mapsto f|_{G^{(0)}\times \T}\in C_c(\Sigma; G).
	\]
	
	We introduce the so-called $j$-map.
	Let $C_0(\Sigma; G)$ denotes the set of $\T$-equivariant functions in $C_0(\Sigma)$.
	By \cite[Proposition 2.8]{BrownFullerPitts2021},
	there exists an injective norm-decreasing linear map $j\colon C^*_r(\Sigma;G)\to C_0(\Sigma; G)$ which extends $\id_{C_c(\Sigma;G)}\colon C_c(\Sigma; G)\to C_c(\Sigma; G)$.
	Via this map,
	we identify an element of $C^*_r(\Sigma;G)$ with the one of $C_0(\Sigma;G)$.
	For example,
	we may define $\insupp(a)$ and $\supp(a)$ for $a\in C^*_r(\Sigma;G)$.
	
	\subsection{Inverse semigroup actions}\label{subsection: inverse semigroup actions}
	
	We recall the basic notions about inverse semigroups.
	See \cite{lawson1998inverse} or \cite{paterson2012groupoids} for more details.
	An inverse semigroup $S$ is a semigroup such that for every $s\in S$ there exists a unique $s^*\in S$ with $s=ss^*s$ and $s^*=s^*ss^*$.
	An element $s^*$ is called a generalized inverse of $s\in S$.
	By a subsemigroup of $S$,
	we mean a subset of $S$ which is closed under the product and generalized inverse of $S$.
	We denote the set of all idempotents in $S$ by $E(S)\defeq\{e\in S\mid e^2=e\}$.
	It is known that $E(S)$ is a commutative subsemigroup of $S$.
	An inverse semigroup which consists of idempotents is called a (meet) semilattice of idempotents.
	A zero element is a unique element $0\in S$ such that $0s=s0=0$ for all $s\in S$.
	An inverse semigroup with a unit is called an inverse monoid.
	A map $\varphi\colon S\to T$ between inverse semigroups $S$ and $T$ is called a semigroup homomorphism if $\varphi(st)=\varphi(s)\varphi(t)$ for all $s,t\in S$.
	Note that a semigroup homomorphism automatically preserves generalized inverses (i.e.\ $\varphi(s^*)=\varphi(s)^*$ for all $s\in S$).
	
	\begin{ex}[{\cite[Proposition 2.2.4]{paterson2012groupoids}}]
		Let $G$ be a locally compact Hausdorff \'etale groupoid.
		The set of all open bisections in $G$ is denoted by $\mathrm{Bis}(G)$.
		For $U, V\in \mathrm{Bis}(G)$,
		their product is defined by
		\[
		UV\defeq \{\alpha\beta\in G\mid \alpha\in U, \beta\in V, d(\alpha)=r(\beta)\}.
		\]
		Then $UV\in\mathrm {Bis}(G)$ and $\mathrm{Bis}(G)$ is an inverse semigroup with respect to this product.
		Note that $U^*\in \mathrm{Bis}(G)$ is given by
		\[
		U^{-1}\defeq \{\alpha^{-1}\in G\mid \alpha\in U\}.
		\]
		In addition,
		let $\Bis^c(G)$ denote the set of all compact open bisections in $G$.
		Then $\Bis^c(G)$ is a subsemigroup of $\Bis(G)$.
	\end{ex}
	
	For a topological space $X$,
	we denote by $I_X$ the set of all homeomorphisms between open subsets in $X$.
	Then $I_X$ is an inverse semigroup with respect to the product defined by the composition of maps.
	For an inverse semigroup $S$,
	an inverse semigroup action $\alpha\colon S\curvearrowright X$ is a semigroup homomorphism $S\ni s\mapsto \alpha_s\in I_X$.
	In this paper, we always assume that every action $\alpha$ satisfies $\bigcup_{e\in E(S)}\dom(\alpha_e)=X$.
	If $S$ has a zero element,
	we assume that $\dom(\alpha_0)=\emptyset$.
	
	Next,
	we recall how to construct an \'etale groupoid from an inverse semigroup action.
	Let $X$ be a locally compact Hausdorff space.
	For an action $\alpha\colon S\curvearrowright X$,
	we associate an \'etale groupoid $S\ltimes_{\alpha}X$ as the following.
	First we put the set $S*X\defeq \{(s,x) \in S\times X \mid x\in \dom(\alpha_{s^*s})\}$.
	Then we define an equivalence relation $\sim$ on $S*X$ by declaring that $(s,x)\sim (t,y)$ if
	\[
	\text{$x=y$ and there exists $e\in E(S)$ such that $x\in \dom(\alpha_e)$ and $se=te$}.  
	\]
	Set $S\ltimes_{\alpha}X\defeq S*X/{\sim}$ and denote the equivalence class of $(s,x)\in S*X$ by $[s,x]$.
	The unit space of $S\ltimes_{\alpha}X$ is $X$, where $X$ is identified with the subset of $S\ltimes_{\alpha}X$ via the injective map
	\[
	X\ni x\mapsto [e,x] \in S\ltimes_{\alpha}X, x\in \dom(\alpha_e).
	\]
	The domain map and range map are defined by
	\[
	d([s,x])=x, r([s,x])=\alpha_s(x)
	\]
	for $[s,x]\in S\ltimes_{\alpha}X$.
	The product of $[s,\alpha_t(x)],[t,x]\in S\ltimes_{\alpha}X$ is $[st,x]$.
	The inverse is $[s,x]^{-1}=[s^*,\alpha_s(x)]$.
	Then $S\ltimes_{\alpha}X$ is a groupoid in these operations.
	For $s\in S$ and an open set $U\subset \dom(\alpha_{s^*s})$,
	define 
	\[[s, U]\defeq \{[s,x]\in S\ltimes_{\alpha}X\mid x\in U\}.\]
	These sets form an open basis of $S\ltimes_{\alpha}X$.
	In these structures,
	$S\ltimes_{\alpha}X$ is a locally compact \'etale groupoid,
	although $S\ltimes_{\alpha}X$ is not necessarily Hausdorff.
	In this paper,
	we only treat inverse semigroup actions $\alpha\colon S\curvearrowright X$ such that $S\ltimes_{\alpha}X$ become Hausdorff.
	
	Let $S$ be an inverse semigroup with $0$ and $\Gamma$ be a discrete group.
	Put $S^{\times}\defeq S\setminus\{0\}$.
	A map $\theta\colon S^{\times}\to \Gamma$ is called a partial homomorphism if $\theta(st)=\theta(s)\theta(t)$ for all $s,t\in S^{\times}$ with $st\not=0$.
	Assume that $\theta\colon S^{\times}\to \Gamma$ is a partial homomorphism and $\alpha\colon S\curvearrowright X$ is an action on a topological space $X$.
	Then we associate a continuous cocycle $\widetilde{\theta}\colon S\ltimes_{\alpha} X \to \Gamma$ defined by
	\[
	\widetilde{\theta}([s,x])\defeq \theta(s)
	\]
	for all $[s,x]\in S\ltimes_{\alpha}X$.
	
	\subsection{C*-algebras}
	
	We recall basic notions about C*-algebras.
	\begin{defi}
		Let $B\subset A$ be an inclusion of C*-algebras.
		An element $n\in A$ is called a normalizer of $B$ if $nBn^*\cup n^*Bn\subset B$ holds.
	\end{defi}
	
	\begin{defi}[{\cite[Definition 5.1]{renault}}]
	Let $B\subset A$ be an inclusion of C*-algebras.
	Then $B$ is called a Cartan subalgebra if
	\begin{enumerate}
		\item $B$ contains an approximate unit of $A$,
		\item $B$ is maximal abelian in $A$,
		\item $B$ is regular, in the sense that the linear span of the normalizers of $B$ is dense in $A$,
		\item there exists a faithful conditional expectation $E\colon A\to B$.
	\end{enumerate}
	\end{defi}
	
	By \cite[Theorem 5.9]{renault} and \cite{Ali_Raad_generalization_of_Renault},
	it is known that,
	if $B\subset A$ is a Cartan subalgebra,
	then there exists a locally compact Hausdorff effective \'etale groupoid $G$ and a twist $\Sigma$ over $G$ such that the inclusion $B\subset A$ is isomorphic to $C_0(G^{(0)})\subset C^*_r(\Sigma;G)$.
	
	We recall the definition of purely infinite C*-algebras.
	Throughout this paper,
	we restrict our attention to simple C*-algebras when dealing with purely infinite C*-algebras.
	Various equivalent characterizations of purely infinite C*-algebras are known; see, for example,
	\cite[Proposition 4.1.1]{Rordamclassification}.
	
	\begin{defi}
		Let $A$ be a C*-algebra.
		A projection $p\in A$ is said to be infinite if there exists $s\in A$ such that $s^*s=p$ and $ss^*\lneq p$.
		A simple C*-algebra $A$ is said to be purely infinite if every nonzero hereditary C*-subalgebra $B\subset A$ contains an infinite projection.
	\end{defi}
	
	A simple,
	purely infinite,
	separable nuclear C*-algebra is called a Kirchberg algebra.
	The Kirchberg--Phillips classification theorem (\cite{Kirchberg1994Classification}, \cite{Phillips_classification}) asserts that unital Kirchberg algebras satisfying the UCT are classified up to *-isomorphism by their K-theory.
	
	We conclude this subsection by recalling the notion of a prime C*-algebra.
	
	\begin{defi}
		Let $A$ be a C*-algebra.
		An ideal $I\subset A$ is said to be essential if $aI=\{0\}$ implies $a=0$ for every $a\in A$.
		A C*-algebra $A$ is said to be prime if every nonzero ideal of $A$ is essential.
	\end{defi}
	
	\subsection{Miscellaneous facts}
	
	We collect facts which will be used in the later sections.
	We begin with fundamental facts about normalizers of groupoid C*-algebras.
	
	\begin{prop}[{\cite[Proposition 4.8]{renault}, \cite[Theorem 3.1]{Armstrong_Brown_local_bisection_hypothesis}}] \label{prop: characterization of normalizer}
		Let $(G,\Sigma)$ be a twisted groupoid over a locally compact Hausdorff \'etale groupoid $G$,
		$q\colon \Sigma\to G$ denote the quotient map and $n\in C^*_r(\Sigma; G)$.
		If $q(\insupp(n))$ is an open bisection of $G$,
		then $n$ is a normalizer of $C_0(G^{(0)})$.
		Conversely,
		if $n$ is a normalizer of $C_0(G^{(0)})$ and $G$ is effective,
		then $q(\insupp(n))$ is an open bisection of $G$.
	
	\end{prop}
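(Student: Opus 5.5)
The first implication is proved by a direct computation. The converse is obtained by exploiting maximal abelianness of $C_0(G^{(0)})$, which is where effectiveness enters.

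\emph{First implication.} Suppose $U\defeq q(\insupp(n))$ is an open bisection. Since normalizers form a closed set and $n^*$ has open support $\insupp(n)^{-1}$, it suffices to show $nfn^*\in C_0(G^{(0)})$ for $f\in C_0(G^{(0)})$. The same argument applied to $n^*$ then handles $n^*fn$.

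I would first treat $n\in C_c(\Sigma;G)$ with $q(\insupp(n))\subset U$. The convolution formula gives
\[
(n*f)(\delta)=n(\delta)f(d(\delta)),
\]
so $\insupp(n*f)\subset \insupp(n)$. For $n*f*n^*$ evaluated at $\delta$, only terms with $q(\epsilon)\in U$ and $q(\epsilon)^{-1}q(\delta)\in U^{-1}$ survive. Hence $q(\delta)\in UU^{-1}\subset G^{(0)}$, because $U$ is a bisection. Thus $n f n^*$ is supported in $q^{-1}(G^{(0)})$, and it lies in $C_0(G^{(0)})$ by the identification recalled above.

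For general $n$, I would approximate $n$ in norm by elements $n_k\in C_c(\Sigma;G)$ with $\insupp(n_k)\subset q^{-1}(U)$. The natural tool is the standard fact that functions in $C_r^*$ whose $j$-image vanishes off an open bisection $U$ form the closure of $C_c(q^{-1}(U);G)$, with reduced norm equal to the sup norm on such functions. Since $C_0(G^{(0)})$ is closed, the conclusion passes to the limit.

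\emph{Converse.} Assume $G$ is effective and $n$ is a normalizer. Set $O\defeq\insupp(n)$, which is open by continuity of $j(n)$. For $f\in C_0(G^{(0)})$, the element $n^*fn$ lies in $C_0(G^{(0)})$. I would compare $j(fn)$ and $j(n\,(f\circ\theta))$, where $\theta$ is the partial homeomorphism associated with $n$.

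More concretely, I would use the commutation relation
\[
f\,n = n\,\bigl(n^*fn\bigr)
\]
in a localized form: on $d(O)$, the element $n^*fn$ should be $|n|^2\cdot(f\circ r\circ \cdot)$. Computing with $j$ gives
\[
f(r(\delta))\,n(\delta)=n(\delta)\,g(d(\delta)).
\]
Hence $f\circ r$ is a function of $d$ on $O$. Since $f$ is arbitrary and $C_0(G^{(0)})$ separates points, two points of $q(O)$ with the same domain have the same range. Symmetrically, using $n^*$, points of $q(O)$ with the same range have the same domain. Therefore, if $\alpha,\beta\in q(O)$ satisfy $d(\alpha)=d(\beta)$, then $\alpha\beta^{-1}$ lies in $\Iso(G)$.

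\emph{Main obstacle.} The hard step is showing $\alpha=\beta$, which is exactly where effectiveness is needed. I would show that the open set $q(O)q(O)^{-1}$ lies in $\Iso(G)$: this set is open, as a product of open sets in an étale groupoid, so it lies in the interior of $\Iso(G)$, which equals $G^{(0)}$. That forces $\alpha=\beta$, so $q(O)$ is a bisection.

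The difficulty is that the displayed identity involves convolution sums, not pointwise products. Making it rigorous requires the $j$-map calculus for $C^*_r$ elements, as in \cite{renault} and \cite{Armstrong_Brown_local_bisection_hypothesis}, and I would rely on those references for that identity.
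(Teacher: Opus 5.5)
The paper gives no proof of this proposition. It quotes it from Renault and from Armstrong--Brown, and your outline is essentially the standard argument behind those references. The route is sound, but the identity on which your converse rests is misstated. As written, $fn=n(n^*fn)$ is false: it is not even homogeneous in $n$. The identity you want is
\[
f\,n\,(n^*n)=f\,(nn^*)\,n=(nn^*)\,f\,n=n\,(n^*fn).
\]
It holds because $nn^*,n^*n\in C_0(G^{(0)})$ (use an approximate unit of $C_0(G^{(0)})$) and $nn^*$ commutes with $f$.

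Now apply $j$ together with the bimodule formulas $j(ha)(\delta)=h(r(\delta))\,j(a)(\delta)$ and $j(ah)(\delta)=j(a)(\delta)\,h(d(\delta))$ for $h\in C_0(G^{(0)})$. These are immediate on $C_c(\Sigma;G)$ and pass to limits because $j$ is norm-decreasing into the sup norm. This gives
\[
f(r(\delta))\,n(\delta)\,(n^*n)(d(\delta))=n(\delta)\,(n^*fn)(d(\delta))\qquad(\delta\in\Sigma).
\]
For $\delta\in O$ you have $(n^*n)(d(\delta))\geq\lvert n(\delta)\rvert^2>0$, since $(n^*n)(x)=\sum_{\gamma\in G_x}\lvert n(\tilde\gamma)\rvert^2$ for lifts $\tilde\gamma\in q^{-1}(\gamma)$. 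Dividing shows that $f(r(\delta))$ depends only on $d(\delta)$ on $O$. So neither the partial homeomorphism $\theta$ nor any convolution-sum computation is needed, and the step you deferred to the references closes in two lines.

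Two smaller points. First, you use that $q(O)$ is open without saying why: $q^{-1}(q(O))=\T O$ is open and $q$ is a quotient map. Second, in the first implication, the equality of the reduced norm and the sup norm for elements supported in $q^{-1}(U)$ must be proved independently. For instance, each $\lambda_x(a)$ then has at most one nonzero matrix entry in each row and column. You cannot take it from the remark following the proposition in the paper, because that remark presupposes the proposition.

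With these repairs your final step is exactly the right use of effectiveness: $q(O)q(O)^{-1}$ is an open subset of $\Iso(G)$, hence lies in $\Iso(G)^{\circ}=G^{(0)}$. It requires no second countability, which matches the generality in which the paper states the result.
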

	Note that the reduced norm of a normalizer $n\in C^*_r(\Sigma;G)$ coincides with the supremum norm,
	since we have $n^*n\in C_0(G^{(0)})$ and
	\begin{align*}
	\lVert n\rVert^2&=\lVert n^*n\rVert=\lVert n^*n\rVert_{\infty}=\sup_{x\in G^{(0)}}\lvert n^*n(x)\rvert\\
	&=\sup_{\delta\in \Sigma}\lvert n(\delta)\rvert^2=\lVert n\rVert_{\infty}^2,
	\end{align*}
	where $\lVert\cdot\rVert_{\infty}$ denotes the supremum norm.
	
	The next proposition is often used to analyze automorphisms of \'etale groupoids.
	
	\begin{prop}[{\cite[Proposition 1.5.3]{Komura2025Weyl}}]\label{prop: restriction of autG to G0 is injective}
		Let $G$ be a locally compact Hausdorff \'etale groupoid.
		Assume that $G$ is effective.
		If $\Phi\in \Aut(G)$ satisfies $\Phi|_{G^{(0)}}=\id_{G^{(0)}}$,
		then $\Phi=\id_G$.
	\end{prop}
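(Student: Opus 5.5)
The plan is to prove this directly from effectiveness, by showing that the open set $\Phi(U)^{-1}U$ lies in the isotropy for every open bisection $U$. Throughout, I take $\Aut(G)$ to mean groupoid isomorphisms $G\to G$ that are also homeomorphisms. Such a $\Phi$ commutes with $d$ and $r$ and carries open bisections to open bisections.

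Fix $\alpha\in G$ and choose an open bisection $U\in\Bis(G)$ with $\alpha\in U$, which exists since $\Bis(G)$ is a basis. Since $\Phi|_{G^{(0)}}=\id_{G^{(0)}}$ and $\Phi$ is a groupoid homomorphism, for every $\beta\in U$ we have
\[
d(\Phi(\beta))=\Phi(d(\beta))=d(\beta),\qquad r(\Phi(\beta))=\Phi(r(\beta))=r(\beta).
\]
Because $\Phi$ is a homeomorphism and a groupoid isomorphism, $\Phi(U)$ is again an open bisection. Hence $W\defeq\Phi(U)^{-1}U$ is an open bisection, as a product of open bisections in the inverse semigroup $\Bis(G)$.

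Next I identify $W$ explicitly. An element of $W$ has the form $\Phi(\beta')^{-1}\beta$ with $\beta,\beta'\in U$ and $r(\Phi(\beta'))=r(\beta)$. By the displayed identity this gives $r(\beta')=r(\beta)$, and since $r|_U$ is injective, $\beta'=\beta$. Thus
\[
W=\{\Phi(\beta)^{-1}\beta\mid \beta\in U\}.
\]
Each such element has domain $d(\beta)$ and range $d(\Phi(\beta))=d(\beta)$, so it lies in $\Iso(G)$. Consequently $W$ is an open subset of $\Iso(G)$, and hence $W$ is contained in the interior of $\Iso(G)$. By effectiveness this interior is $G^{(0)}$.

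Therefore $\Phi(\beta)^{-1}\beta$ is a unit, necessarily $d(\beta)$, for every $\beta\in U$. This gives $\Phi(\beta)=\beta$ on $U$, and in particular $\Phi(\alpha)=\alpha$. Since $\alpha$ was arbitrary, $\Phi=\id_G$.

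There is no serious obstacle. The only point needing care is the identification of $W$, where injectivity of $r|_U$ is used so that $W$ consists only of the "diagonal" products $\Phi(\beta)^{-1}\beta$, together with the openness of products of open bisections.
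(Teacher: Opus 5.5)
Your proof is correct and complete: $\Phi(U)^{-1}U$ is an open bisection, injectivity of $r|_U$ shows it equals $\{\Phi(\beta)^{-1}\beta\mid \beta\in U\}\subset \Iso(G)$, and effectiveness then forces it into $G^{(0)}$, so $\Phi=\id_G$ on $U$. The paper gives no proof of its own here (it cites \cite[Proposition 1.5.3]{Komura2025Weyl}), so there is nothing in the text to compare against; your bisection argument is a self-contained proof, and it uses neither local compactness nor the Hausdorff property.
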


	We show a so-called uniqueness theorem (Proposition \ref{prop: uniqueness theorem for groupoid C*-algebra}) without assuming second countability of \'etale groupoids.
	This result is not new and has already been proved in \cite[Theorem 7.29]{KwasniewskiMeyerEssentialcrossedproduct} and \cite[Proposition 4.15]{Brix_Carsen_Sims2024}.
	We include the proof here since it seems that our argument requires fewer techniques than the existing ones.
	We begin with some preliminary results.
	
	\begin{prop}\label{prop: equivalent condition to effectiveness}
		Let $G$ be a locally compact Hausdorff \'etale groupoid.
		Then $G$ is effective if and only if the following condition holds:
		for a compact set $K\subset G\setminus G^{(0)}$ and a nonempty open set $U\subset G^{(0)}$,
		there exists a nonempty open set $V\subset U$ such that $VKV=\emptyset$.
		
	\end{prop}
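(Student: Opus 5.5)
We prove both implications. Throughout, for subsets of $G$ we write $VKV$ for the set of products $\alpha\beta\gamma$ with $\alpha,\gamma\in V$ and $\beta\in K$ (when defined). Since $V\subset G^{(0)}$ consists of units, $VKV=\{\beta\in K\mid d(\beta)\in V,\ r(\beta)\in V\}$. So the condition $VKV=\emptyset$ says that no element of $K$ has both its range and its domain in $V$.

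\emph{The condition implies effectiveness.} Suppose $G$ is not effective. Then there is an open set $W\subset\Iso(G)$ with $W\not\subset G^{(0)}$. Pick $\alpha\in W\setminus G^{(0)}$.

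Since $G$ is Hausdorff and étale, $G^{(0)}$ is clopen. It is closed by Hausdorffness (it is the set where $\gamma=r(\gamma)$), and open by étaleness. So we may shrink $W$ to a precompact open bisection $B$ with $\alpha\in B$, $\overline{B}\subset G\setminus G^{(0)}$ and $B\subset\Iso(G)$. To do this, use local compactness to take a compact neighbourhood of $\alpha$ inside $W\cap(G\setminus G^{(0)})\cap(\text{an open bisection})$.

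Put $K=\overline{B}$ and $U=d(B)$, which is a nonempty open subset of $G^{(0)}$. Let $V\subset U$ be any nonempty open set and take $x\in V$. There is $\beta\in B$ with $d(\beta)=x$. Since $\beta\in\Iso(G)$, also $r(\beta)=x$, so $\beta\in VKV$. This contradicts the condition.

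\emph{Effectiveness implies the condition.} Let $K\subset G\setminus G^{(0)}$ be compact and $U\subset G^{(0)}$ nonempty open. Cover $K$ by finitely many open bisections $B_1,\dots,B_n$. Using a normal-space argument in the compact Hausdorff space $K$, choose compact sets $K_i\subset B_i$ with $K=\bigcup_i K_i$. Each $K_i\subset B_i\setminus G^{(0)}$.

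It then suffices to handle one compact set $L$ contained in an open bisection $B$ with $L\cap G^{(0)}=\emptyset$, and to iterate. Given $V_{i-1}$, produce a nonempty open $V_i\subset V_{i-1}$ with $V_iK_iV_i=\emptyset$. Shrinking preserves emptiness of the earlier products, so $V=V_n$ works.

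For the single step, take $L\subset B$ and a nonempty open $U$. Consider the partial homeomorphism $h=r\circ(d|_B)^{-1}\colon d(B)\to r(B)$. Two cases arise.

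\emph{Case 1: some $x\in U$ lies outside $d(L)$.} Then $U\setminus d(L)$ is a nonempty open set, because $d(L)$ is compact and hence closed. Take $V=U\setminus d(L)$. No $\beta\in L$ has $d(\beta)\in V$, so $VLV=\emptyset$.

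\emph{Case 2: $U\subset d(L)$.} Look for $x\in U$ with $h(x)\neq x$.

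\emph{Key step.} Suppose $h(x)=x$ for all $x\in U$. Then the open set $(d|_B)^{-1}(U)\subset B$ lies in $\Iso(G)$. By effectiveness it lies in $G^{(0)}$. But it is contained in $L$, since $U\subset d(L)$ and $B$ is a bisection, and $L\cap G^{(0)}=\emptyset$. This is a contradiction.

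So there is $x\in U$ with $h(x)\neq x$. By Hausdorffness and continuity of $h$, choose a neighbourhood $V\subset U$ of $x$ with $h(V)\cap V=\emptyset$. Now let $\beta\in L$ with $d(\beta),r(\beta)\in V$. Then $\beta=(d|_B)^{-1}(d(\beta))$, so $r(\beta)=h(d(\beta))\in h(V)\cap V=\emptyset$, which is impossible. Hence $VLV=\emptyset$.

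The only subtle point is this key step, which is where effectiveness enters, together with the finite-cover reduction; both are routine.
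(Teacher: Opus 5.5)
Your proof is correct. The direction ``condition $\Rightarrow$ effective'' is essentially the paper's argument; the paper takes $U=r(W)$ where you take $d(B)$, which is immaterial. For ``effective $\Rightarrow$ condition'' you take a genuinely different route. The paper covers $K$ by precompact open bisections $S_i$ with $\overline{S_i}\subset S_i'$ and forms the open dense sets $A_{S_i'}=r(S_i'\setminus\Iso(G))\cup(G^{(0)}\setminus\overline{r(S_i')})$. It then picks a single point $x_0\in U\cap\bigcap_i A_{S_i'}$ that is generic for all the bisections at once. Its $V$ is one finite intersection of neighbourhoods of $x_0$: the sets $r(V_i)$ with $r(V_i)\cap d(V_i)=\emptyset$ for those $S_i'$ whose range contains $x_0$, with the compact sets $r(\overline{S_i})$ removed for the others. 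You instead split $K$ into compact pieces inside bisections and shrink $V$ one piece at a time. Each step is a dichotomy. Either $U\not\subset d(L)$, and you delete the closed set $d(L)$. Or $U\subset d(L)$, and effectiveness applied to the open set $(d|_B)^{-1}(U)\subset L$ gives a point moved by $h$; continuity then gives $h(V)\cap V=\emptyset$. Your version avoids density considerations entirely and uses effectiveness exactly once per piece. It also does not matter in your argument whether the covering bisections meet $G^{(0)}$. By contrast, the paper's claim that $r(S_i'\setminus\Iso(G))$ is dense in $r(S_i')$ tacitly requires choosing $S_i'\subset G\setminus G^{(0)}$. In exchange, the paper's version describes the good sets explicitly: $V$ can be centred at any point of the dense open set $U\cap\bigcap_i A_{S_i'}$, i.e.\ any point of $U$ with no nontrivial isotropy in any $S_i'$. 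Your $V$ is instead produced by an iterative choice.
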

	
	\begin{proof}
		We first assume the latter condition and show that $G$ is effective.
		Take $\alpha\in\Iso(G)^{\circ}$ and suppose that $\alpha\not\in G^{(0)}$.
		Then there exists an open bisection $W\subset \Iso(G)$ with $\alpha\in W$ such that $\overline{W}\subset G\setminus G^{(0)}$ and $\overline{W}$ is compact.
		By the assumption,
		there exists a nonempty open subset $V\subset r(W)$ such that $V\overline{W}V=\emptyset$.
		Using $\emptyset\not= V\subset r(W)$,
		take $x\in V$ and $\gamma\in W$ with $x=r(\gamma)$.
		Since $\gamma\in W\subset \Iso(G)$,
		we have $r(\gamma)=d(\gamma)$.
		Hence $\gamma\in V\overline{W}V$ and this is a contradiction.
		
		Next,
		we assume that $G$ is effective.
		Take finitely many open bisections $\{S_i\}_{i\in I},\{S_i'\}_{i\in I}$ of $G$ such that
		\begin{itemize}
			\item $S_i\subset \overline{S_i}\subset S_i'$ and $\overline{S_i}$ is compact for each $i\in I$, and
			\item $K\subset \bigcup_{i\in I}S_i$.
		\end{itemize}
		Put
		\[
		A_{S'_i}\defeq r(S'_i\setminus \Iso(G))\cup (G^{(0)}\setminus \overline{r(S'_i)}).
		\]
		Since $r(S'_i\setminus \Iso(G))$ and $G^{(0)}\setminus \overline{r(S'_i)}$ are dense in $r(S'_i)$ and $G^{(0)}\setminus r(S'_i)$ respectively,
		$A_{S'_i}$ is open and dense in $G^{(0)}$.
		By Baire theorem,
		$\bigcap_{i\in I} A_{S'_i}$ is dense in $G^{(0)}$.
		Note that we have
		\[
		A_{S'_i}\subset \{x\in G^{(0)}\mid G_x^x\cap S'_i=\emptyset\}
		\]
		for each $i\in I$.
		
		Fix $x_0\in U\cap  \bigcap_{i\in I} A_{S'_i}$.
		Put
		\[
		I'\defeq \{i\in I\mid x_0\in r(S_i')\}.
		\]
		For each $i\in I'$,
		there exists $\gamma_i\in S_i'$ with $x_0=r(\gamma_i)$.
		Since $x_0\in A_{S'_i}$,
		we have $r(\gamma_i)\not=d(\gamma_i)$.
		Then there exists an open bisection $V_i\subset S_i'$ with $\gamma_i\in V_i$ and $r(V_i)\cap d(V_i)=\emptyset$.
		Put 
		\[
		V\defeq \bigg( U\cap \bigcap_{i\in I'} r(V_i)\bigg)\setminus\bigg( \bigcup_{i\in I\setminus I'}r(\overline{S_i})\bigg).
		\]
		Then $V\subset U$ is open and nonempty since $x_0\in V$.
		
		Now, it suffices to show $VKV=\emptyset$.
		Assume that there exists $\gamma\in VKV$.
		Then there exists $i\in I$ with $\gamma\in S_i$.
		Since $r(\gamma)\in V\cap r(S_i)$,
		$i$ belongs to $I'$.
		Since $r(\gamma)\in r(V_i)$,
		there exists $\beta\in V_i$ with $r(\gamma)=r(\beta)$.
		We may conclude $\gamma=\beta$ since $\gamma$ and $\beta$ belong to the same bisection $S_i'$.
		Since we have $\gamma=\beta\in V_i$, $d(V_i)\cap r(V_i)=\emptyset$ and $V\subset r(V_i)$,
		we obtain $d(\gamma)\not\in V$.
		This contradicts to $\gamma\in VKV$.
		Hence $VKV=\emptyset$.
		\qed
	\end{proof}
	
	\begin{lem}\label{lem: lemma for uniqueness theorem}
		Let $G$ be a locally compact Hausdorff effective \'etale groupoid and $E\colon C^*_r(G)\to C_0(G^{(0)})$ denote the standard conditional expectation.
		Fix $f\in C_c(G)$ and $\varepsilon>0$ and assume that $\pi\colon C_c(G)\to B(H)$ is a *-representation such that $\pi|_{C_c(G^{(0)})}$ is injective.
		Then there exists $h\in C_c(G^{(0)})_+$ with the following property:
		\begin{enumerate}
			\item $hfh=hE(f)h$,
			\item $\lVert \pi(hfh)\rVert\geq \lVert E(f)\rVert-\varepsilon$, and
			\item $\lVert h\rVert=1$.
		\end{enumerate}
		In particular,
		$\lVert\pi(f)\rVert\geq \lVert E(f)\rVert$ holds for all $f\in C_c(G)$.
	\end{lem}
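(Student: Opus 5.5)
We will use Proposition \ref{prop: equivalent condition to effectiveness}. Write $f=E(f)+f_1$, where $E(f)=f|_{G^{(0)}}\in C_c(G^{(0)})$ and $f_1\defeq f-E(f)$. Since $G^{(0)}$ is clopen in $G$ (it is open because $G$ is étale and closed because $G$ is Hausdorff), $f_1$ is continuous with compact support $K\defeq\supp(f_1)\subset G\setminus G^{(0)}$. If $E(f)=0$ the statement is trivial (take any $h$ of norm one whose support avoids the problem, or argue directly), so assume $\lVert E(f)\rVert>\varepsilon$ and consider
\[
U\defeq\{x\in G^{(0)}\mid \lvert E(f)(x)\rvert>\lVert E(f)\rVert-\varepsilon\},
\]
a nonempty open subset of $G^{(0)}$. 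By Proposition \ref{prop: equivalent condition to effectiveness} there is a nonempty open $V\subset U$ with $VKV=\emptyset$.

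Next choose $h\in C_c(G^{(0)})_+$ with $\lVert h\rVert=1$, $\supp(h)\subset V$, and $h(x_0)=1$ at some $x_0\in V$; this is possible by Urysohn's lemma. For $\gamma\in G$ the convolution formula gives $(hf_1h)(\gamma)=h(r(\gamma))f_1(\gamma)h(d(\gamma))$. This vanishes unless $r(\gamma),d(\gamma)\in V$ and $\gamma\in K$, that is, unless $\gamma\in VKV=\emptyset$. Hence $hf_1h=0$, which gives (1): $hfh=hE(f)h=h^2E(f)$. Property (3) holds by construction.

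For (2), we use that $\pi|_{C_c(G^{(0)})}$ is an injective $*$-homomorphism on $C_c(G^{(0)})$. This is the step needing the most care. Since $C_c(G^{(0)})$ is not complete, we cannot directly quote the fact that injective $*$-homomorphisms of C*-algebras are isometric. Instead we argue as follows. For $g\in C_c(G^{(0)})$, take a compact open-neighbourhood setting: choose $W\supset\supp(g)$ relatively compact and open. Then $\pi$ restricts to $C_0(W)\cap C_c$, which is dense in $C_0(W)$. The map $\pi$ is contractive there, because for $\lVert g\rVert_\infty<1$ the element $1-g^*g$ has a square root in $C_c$ locally (use $k=\sqrt{\,c-g^*g\,}$, with $c$ a bump equal to $1$ on $\supp g$), so $\pi(g)^*\pi(g)\le \pi(c)\le 1$. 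Hence $\pi$ extends to a $*$-homomorphism $C_0(G^{(0)})\to B(H)$. Its kernel is an ideal $C_0(O)$ for some open $O$, and if $O\neq\emptyset$ then $O$ contains the support of a nonzero $C_c$ function, contradicting injectivity. So the extension is injective, hence isometric. Therefore
\[
\lVert\pi(hfh)\rVert=\lVert h^2E(f)\rVert_\infty\ge\lvert E(f)(x_0)\rvert>\lVert E(f)\rVert-\varepsilon.
\]

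Finally, for the consequence, we need $\lVert\pi(hfh)\rVert\le\lVert\pi(h)\rVert^2\lVert\pi(f)\rVert$. Here $\lVert\pi(h)\rVert=\lVert h\rVert=1$ by isometry, so $\lVert\pi(f)\rVert\ge\lVert E(f)\rVert-\varepsilon$ for every $\varepsilon>0$, and letting $\varepsilon\to0$ gives $\lVert\pi(f)\rVert\ge\lVert E(f)\rVert$. The main obstacle is thus the isometry of $\pi$ on $C_c(G^{(0)})$; the combinatorial heart of the argument, $hf_1h=0$, is supplied by Proposition \ref{prop: equivalent condition to effectiveness}.
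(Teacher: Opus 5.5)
Your proof is correct and follows the paper's argument: the same $K=\supp(f-E(f))$ and $U$, the set $V$ with $VKV=\emptyset$ from Proposition~\ref{prop: equivalent condition to effectiveness}, and an Urysohn bump $h$ supported in $V$. Where you differ is the step $\lVert\pi(g)\rVert=\lVert g\rVert_\infty$ on $C_c(G^{(0)})$: the paper asserts it directly from injectivity, while you justify it, and your justification is sound once you note that $\pi(c)\le 1$ follows from $\pi(c)\ge 0$ and $\pi(c)-\pi(c)^2=\pi(\sqrt{c-c^2})^2\ge 0$. Your case split on $\lVert E(f)\rVert$ is unnecessary, because $U$ is nonempty for every $\varepsilon>0$ (whenever $G^{(0)}\neq\emptyset$), so the construction works uniformly.
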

	\begin{proof}
		Put
		\begin{align*}
			&K\defeq \supp(f-E(f)),\\
			&U\defeq \{x\in G^{(0)}\mid \lvert f(x)\rvert>\lVert E(f)\rVert-\varepsilon\}.
		\end{align*}
		Since $K\subset G\setminus G^{(0)}$ and $U\subset G^{(0)}$ is a nonempty open set,
		there exists a nonempty open set $V\subset U$ such that $VKV=\emptyset$ by Proposition \ref{prop: equivalent condition to effectiveness}.
		Take $h\in C_c(V)_+$ with $\lVert h\rVert=1$ by Urysohn's lemma.
		Then we obtain $hfh=hE(f)h$ from $h(f-E(f))f=0$.
		In addition,
		since $h(x)=1$ for some $x\in V$,
		we have
		\[
		\lVert hE(f)h\rVert\geq \lvert (hE(f)h)(x)\rvert=\lvert f(x)\rvert>\lVert E(f)\rVert-\varepsilon.
		\]
		Since $\pi$ is injective on $C_c(G^{(0)})$,
		we obtain
		\[
		\lVert \pi(hfh)\rVert=\lVert\pi(hE(f)h)\rVert=\lVert hE(f)h\rVert>\lVert E(f)\rVert-\varepsilon.
		\]
		The last assertion is now obvious.
		This completes the proof.
		\qed
	\end{proof}
	
	Now,
	we are ready to show the uniqueness theorem for $C_0(G^{(0)})\subset C^*_r(G)$.
	
	\begin{prop}\label{prop: uniqueness theorem for groupoid C*-algebra}
		Let $G$ be an \'etale locally compact Hausdorff groupoid.
		Assume that $G$ is effective and $\pi\colon C^*_r(G)\to B(H)$ is a *-representation such that $\pi|_{C_0(G^{(0)})}$ is injective.
		Then $\pi$ is injective.
	\end{prop}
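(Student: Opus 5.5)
The plan is to use Lemma \ref{lem: lemma for uniqueness theorem}, which already gives $\lVert\pi(f)\rVert\geq\lVert E(f)\rVert$ for every $f\in C_c(G)$. The key is to upgrade this to elements of $C^*_r(G)$ and then to the positive elements of $\ker\pi$.

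First, extend the inequality by continuity. Both $\pi$ and $E$ are norm-continuous on $C^*_r(G)$; $\pi$ is a $*$-homomorphism of C*-algebras, hence contractive, and $E$ is a conditional expectation. Since $C_c(G)$ is dense, the lemma gives
\[
\lVert\pi(a)\rVert\geq\lVert E(a)\rVert
\]
for all $a\in C^*_r(G)$. Here $\pi|_{C_c(G)}$ is regarded as the restriction of the given representation of $C^*_r(G)$, and the lemma's hypothesis holds because $\pi|_{C_c(G^{(0)})}$ is injective, being the restriction of the injective map $\pi|_{C_0(G^{(0)})}$.

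Second, take $a\in\ker\pi$. Then $a^*a\in\ker\pi$, so $\lVert E(a^*a)\rVert\leq\lVert\pi(a^*a)\rVert=0$, giving $E(a^*a)=0$. Faithfulness of $E$, recalled in the preliminaries, then yields $a^*a=0$ and hence $a=0$. So $\pi$ is injective.

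The only delicate point is the density and continuity step. One must check that the lemma's inequality, proved for each $f\in C_c(G)$ with $\varepsilon$ arbitrary, passes to norm limits. Taking $f_n\to a$ in $C^*_r(G)$, we have $\pi(f_n)\to\pi(a)$ and $E(f_n)\to E(a)$, since both maps are contractive. The inequality is therefore preserved in the limit, and I expect no genuine obstacle here.
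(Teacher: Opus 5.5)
Your proof is correct and follows essentially the same route as the paper: both rest on the inequality $\lVert\pi(f)\rVert\geq\lVert E(f)\rVert$ from Lemma \ref{lem: lemma for uniqueness theorem} together with the faithfulness of $E$. The paper packages this inequality as a conditional expectation $F\colon\pi(C^*_r(G))\to\pi(C_0(G^{(0)}))$ with $F\circ\pi=\pi\circ E$. You instead apply the extended inequality directly to $a^*a$, which is a slightly more economical version of the same argument.
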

	
	\begin{proof}
		Let $E\colon C^*_r(G)\to C_0(G^{(0)})$ denote the standard conditional expectation.
		By Lemma \ref{lem: lemma for uniqueness theorem},
		we have a conditional expectation $F\colon \pi(C^*_r(G))\to \pi(C_0(G^{(0)}))$ such that $F\circ \pi=\pi\circ E$.
		Assume that $a\in C^*_r(G)$ satisfies $\pi(a)=0$.
		Then we have $F(\pi(a^*a))=0$ and hence $\pi(E(a^*a))=0$.
		Since $\pi$ is injective on $C_0(G^{(0)})$,
		we have $E(a^*a)=0$.
		Since $E$ is faithful,
		we obtain $a=0$ and therefore $\pi$ is injective.
		\qed
	\end{proof}
	
	Proposition \ref{prop: uniqueness theorem for groupoid C*-algebra} can be rephrased as the following corollary,
	which is known as the ideal intersection property of $C_0(G^{(0)})\subset C^*_r(G)$.
	
	\begin{cor}\label{cor: ideal intersection property}
		Let $G$ be an \'etale locally compact Hausdorff groupoid.
		Assume that $G$ is effective and $I\subset C^*_r(G)$ is a nonzero ideal.
		Then $I\cap C_0(G^{(0)})$ is also a nonzero ideal of $C_0(G^{(0)})$.
	\end{cor}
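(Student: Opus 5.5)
The corollary follows from Proposition \ref{prop: uniqueness theorem for groupoid C*-algebra} applied to a representation whose kernel is exactly $I$. We argue by contraposition: assume $I\cap C_0(G^{(0)})=\{0\}$ and deduce $I=\{0\}$.

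First form the quotient map $Q\colon C^*_r(G)\to C^*_r(G)/I$. By the Gelfand--Naimark theorem, choose a faithful *-representation $\rho\colon C^*_r(G)/I\to B(H)$, and set $\pi\defeq\rho\circ Q$. Then $\pi$ is a *-representation of $C^*_r(G)$ with $\ker\pi=I$.

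Next check that $\pi$ is injective on $C_0(G^{(0)})$. This is immediate: $\ker(\pi|_{C_0(G^{(0)})})=I\cap C_0(G^{(0)})=\{0\}$ by assumption. Since $G$ is effective, Proposition \ref{prop: uniqueness theorem for groupoid C*-algebra} now shows that $\pi$ is injective. Hence $I=\ker\pi=\{0\}$, contradicting $I\neq\{0\}$.

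Finally, $I\cap C_0(G^{(0)})$ is an ideal of $C_0(G^{(0)})$: it is a closed subspace, and it is closed under multiplication by $C_0(G^{(0)})$ because $I$ is a two-sided ideal of $C^*_r(G)$. No step poses a real obstacle. The only care needed is that Proposition \ref{prop: uniqueness theorem for groupoid C*-algebra} is phrased for representations on Hilbert spaces, which is why the quotient $C^*_r(G)/I$ is represented faithfully on $B(H)$.
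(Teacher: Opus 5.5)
Your proof is correct and matches the paper's approach: the paper gives no separate proof and presents this corollary as a direct rephrasing of Proposition \ref{prop: uniqueness theorem for groupoid C*-algebra}. Applying that proposition to $\pi=\rho\circ Q$, whose kernel is exactly $I$ (with ideals understood, as usual, to be closed), is precisely that rephrasing.
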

	
	The ideal intersection property has various applications in the structural analysis of groupoid C*-algebras.
	For example,
	\cite{Brown2014} studies the simplicity of groupoid C*-algebras using the ideal intersection property.
	As another example,
	we provide a sufficient condition for a groupoid C*-algebra to be prime.
	The property of primeness is important and plays a crucial role in this paper,
	for instance in Theorem \ref{theorem every compact subgroup of Cartan fixing are fin gen}.
	
	\begin{cor}[{cf.\ \cite[Proposition 1.5.1]{Komura2025Weyl}}]\label{cor: prime is equivalent to topologically transitive}
		Let $G$ be a locally compact Hausdorff \'etale groupoid.
		If $C^*_r(G)$ is prime,
		then $G$ is topologically transitive.
		If $G$ is topologically transitive and effective,
		then $C^*_r(G)$ is prime.
	\end{cor}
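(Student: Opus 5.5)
We prove the two implications separately. Both go through the correspondence between open invariant subsets $U\subset G^{(0)}$ and ideals $I_U$ of $C^*_r(G)$: let $I_U$ be the closure of $C_c(G_U)$, where $G_U=d^{-1}(U)=r^{-1}(U)$ is open by invariance. Since $G_U$ is an open subgroupoid and $C_c(G)*C_c(G_U)*C_c(G)\subset C_c(G_U)$, the set $I_U$ is a closed two-sided ideal. Moreover $I_U\cap C_0(G^{(0)})\supset C_0(U)$.

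For the first assertion, we argue by contraposition. Suppose $G$ is not topologically transitive. By the equivalent formulation recalled in the preliminaries, there is a nonempty open invariant set $U$ that is not dense. Put $W\defeq G^{(0)}\setminus\overline{U}$. This set is nonempty and open, and it is invariant: the complement of an invariant set is invariant, and closures of invariant sets are invariant because $r\circ d^{-1}$ sends open sets to open sets. Then $I_U$ and $I_W$ are nonzero ideals, since they contain $C_c(U)$ and $C_c(W)$ respectively. We claim $I_W I_U=\{0\}$. For $f\in C_c(G_W)$ and $g\in C_c(G_U)$, the value $f*g(\gamma)=\sum f(\alpha)g(\alpha^{-1}\gamma)$ requires $d(\alpha)\in W$ and $r(\alpha^{-1}\gamma)=d(\alpha)\in U$, which is impossible because $U\cap W=\emptyset$. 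By continuity the claim extends to the closures. Hence a nonzero $a\in I_W$ satisfies $aI_U=0$, so $I_U$ is not essential and $C^*_r(G)$ is not prime.

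For the second assertion, assume $G$ is topologically transitive and effective. Let $I\neq0$ be an ideal and $a\in C^*_r(G)$ with $aI=0$; we want $a=0$. Suppose $a\neq0$. Let $J$ be the closed ideal generated by $a$. Then $JI\subset \overline{C^*_r(G)aC^*_r(G)I}=0$, so also $J\cap I\subset$ the annihilator of itself, giving $(J\cap I)^2=0$ and hence $J\cap I=0$. By Corollary \ref{cor: ideal intersection property}, $I\cap C_0(G^{(0)})=C_0(U)$ and $J\cap C_0(G^{(0)})=C_0(V)$ for nonempty open sets $U,V$. These sets are invariant: if $n$ is a normalizer with $q(\insupp n)$ a bisection $S$, then $nC_0(U)n^*\subset I\cap C_0(G^{(0)})$ has support $r(SU)$, and $r(SU)$ for all open bisections $S$ exhausts the saturation of $U$. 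By topological transitivity $U$ is dense, so $U\cap V\neq\emptyset$. Then $C_0(U\cap V)\subset I\cap J=0$, a contradiction. Therefore $a=0$, every nonzero ideal is essential, and $C^*_r(G)$ is prime.

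The main obstacle is the invariance step in the second part, namely showing that $I\cap C_0(G^{(0)})$ is the function algebra of an invariant open set. The most reliable route is to conjugate $C_0(U)$ by functions in $C_c(S)$ for open bisections $S$, which are normalizers by Proposition \ref{prop: characterization of normalizer}, and check directly that $r(S\cdot U)\subset U$.
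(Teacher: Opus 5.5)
Your proof is correct, but it takes a different route from the paper in both halves.

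\textbf{First assertion.} The paper simply invokes \cite[Proposition 1.5.1]{Komura2025Weyl}. You instead give a self-contained contrapositive. A nonempty, non-dense open invariant set $U$ and the open invariant set $W=G^{(0)}\setminus\overline{U}$ yield nonzero ideals $\overline{C_c(G_U)}$ and $\overline{C_c(G_W)}$ whose product is zero. This argument uses neither effectiveness nor the conditional expectation.

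\textbf{Second assertion.} The paper works with the single element $a$. It writes $I\cap C_0(G^{(0)})=C_0(U)$, gets $U\neq\emptyset$ from Corollary \ref{cor: ideal intersection property} and invariance of $U$ from \cite[Lemma 10.3.1]{asims}, so $U$ is dense. Then $E(a^*a)f=E(a^*af)=0$ for all $f\in C_0(U)$, so $E(a^*a)$ vanishes on a dense set, and faithfulness of $E$ gives $a=0$.

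You instead pass to the closed ideal $J$ generated by $a$, observe $I\cap J=0$, and apply the intersection property a second time to get $V$ with $0\neq C_0(U\cap V)\subset I\cap J$. This is the familiar ``two nonzero ideals must meet'' form of primeness and uses the intersection property purely as a black box. The paper's version is a step shorter, since it needs that property only once and lets $E$ finish the job. Only the invariance of $U$, not of $V$, is actually needed.

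\textbf{Invariance step.} The phrase ``exhausts the saturation of $U$'' is loose, but it becomes rigorous via the pointwise identity $(nfn^*)(r(\alpha))=\lvert n(\alpha)\rvert^2 f(d(\alpha))$ for $n\in C_c(S)$ and $\alpha\in S$. Choosing $n(\alpha)\neq 0$ and $f\in C_c(U)$ with $f(d(\alpha))\neq 0$ gives $nfn^*\in I\cap C_0(G^{(0)})=C_0(U)$, which forces $r(\alpha)\in U$ whenever $d(\alpha)\in U$.
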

	
	\begin{proof}
		The first assertion follows from \cite[Proposition 1.5.1]{Komura2025Weyl}.
		Assume that $G$ is topologically transitive and effective.
		Consider a nonzero ideal $I\subset C^*_r(G)$ and assume that $a\in C^*_r(G)$ satisfies $aI=\{0\}$.
		There exists an open set $U\subset G^{(0)}$ with $I\cap C_0(G^{{0}})=C_0(U)$.
		Then $U$ is invariant by \cite[Lemma 10.3.1]{asims} and nonempty by Corollary \ref{cor: ideal intersection property}.
		Thus $U\subset G^{(0)}$ is dense since we assume that $G$ is topologically transitive.
		Now one can check $a=0$ in the same way as \cite[Proposition 1.5.1]{Komura2025Weyl}.
		Indeed,
		since we have $a^*aC_0(U)=\{0\}$,
		we obtain $a^*a(x)=0$ for all $x\in U$.
		Since $U\subset G^{(0)}$ is dense,
		we obtain $E(a^*a)=0$,
		where $E\colon C^*_r(G)\to C_0(G^{(0)})$ is the standard conditional expectation.
		Since $E$ is faithful,
		we obtain $a=0$.
		Therefore $I$ is an essential ideal and $C^*_r(G)$ is prime.
		\qed
	\end{proof}
	
	\begin{rem}
		In \cite[Proposition 1.5.1]{Komura2025Weyl},
		the author assumes that $G$ is topologically transitive and topologically principal to show that $C^*_r(G)$ is prime.
		In Corollary \ref{cor: prime is equivalent to topologically transitive},
		we relax the assumption that $G$ is topologically principal to effectiveness of $G$.
	\end{rem}
		
		Following \cite{AnatharamanDelapurelyinfinite},
		we provide a sufficient condition for groupoid C*-algebras to be purely infinite
		
	\begin{thm}[{\cite[Proposition 2.4]{AnatharamanDelapurelyinfinite}, \cite[Theorem 10.4.2]{asims}}] \label{thm: locally contracting implies purely infinite}
		Let $G$ be a locally compact Hausdorff \'etale groupoid.
		Assume that $G$ is effective and locally contracting.
		Then $C^*_r(G)$ is purely infinite.
	\end{thm}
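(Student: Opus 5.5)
The plan is to follow Anantharaman-Delaroche's argument: show that every nonzero hereditary C*-subalgebra $B\subset C^*_r(G)$ contains an infinite projection. The same argument shows simplicity is not needed for this step. Reading "purely infinite" here as in the cited references, it is enough to prove that every nonzero positive element $a\in C^*_r(G)$ is properly infinite, or that $\overline{aC^*_r(G)a}$ contains an infinite projection.

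\emph{Step 1: reduce to a function on the unit space.} Fix a nonzero positive $a$. I will use the conditional expectation $E$ and Lemma \ref{lem: lemma for uniqueness theorem}, approximating $a$ by some $f\in C_c(G)$. This yields $h\in C_c(G^{(0)})_+$ with $\lVert h\rVert=1$ and $hfh=hE(f)h$, where $hE(f)h$ has norm close to $\lVert E(a)\rVert>0$ (here $E$ is faithful). Standard perturbation, via $(x-\varepsilon)_+$ functional calculus and Kirchberg--Rørdam type arguments, then gives a nonzero $g\in C_0(G^{(0)})_+$ with $g\precsim a$ in the Cuntz sense. Shrinking $g$ further, I may take $g$ to dominate the indicator-like behaviour on some nonempty open set $U\subset G^{(0)}$. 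Concretely, I want $b\in\overline{aAa}$ and an element $g\in C_0(U)_+$ which equals $1$ on a smaller open set, with $g=b^*ab$ or $g\precsim a$.

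\emph{Step 2: use local contraction to build the infinite element.} Apply the locally contracting hypothesis to $U$. This gives an open $V\subset U$ and an open bisection $S$ with $\overline V\subset d(S)$ and $r(S\overline V)\subsetneq V$. Choose $\phi\in C_c(d(S))$ with $\phi=1$ on $\overline V$, and let $s\in C_c(S)$ be the function corresponding to $\phi\circ d$ on $S$. Then $s$ is a normalizer by Proposition \ref{prop: characterization of normalizer}. For $k\in C_0(V)_+$ equal to $1$ on a neighbourhood of $r(S\overline V)$, the element $sk$ satisfies $(sk)^*(sk)=k^2$, while $(sk)(sk)^*$ is supported in $r(S\overline V)$. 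Since $r(S\overline V)$ is a proper closed subset of $V$, there is a nonzero $k'\in C_0(V\setminus r(S\overline V))_+$ orthogonal to $(sk)(sk)^*$. This shows $k\oplus k'\precsim k$, so $k$ is properly infinite, because the open set left over can be fed back into the same construction.

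\emph{Step 3: conclude.} From Steps 1 and 2, every nonzero hereditary subalgebra contains a properly infinite positive element. Such an element yields an infinite projection by the standard characterization in \cite[Proposition 4.1.1]{Rordamclassification}. Alternatively, in the simple case, purely infinite simplicity follows from the criterion that every nonzero positive element is infinite.

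The main obstacle is Step 1: passing from an arbitrary positive element of $C^*_r(G)$ to a genuinely Cuntz-subequivalent element of $C_0(G^{(0)})$ with controlled support. Lemma \ref{lem: lemma for uniqueness theorem} only handles $f\in C_c(G)$, so an approximation argument with $\varepsilon$-cutdowns is needed. I would first try $a'=(a-\varepsilon)_+$, choose $f$ close to $a$, compress by $h$, and note that $h a h$ is close to $hE(f)h\geq \lVert E(a)\rVert/2\cdot h'$ on a small set. Rørdam's lemma, which says $\lVert x-y\rVert<\varepsilon$ implies $(x-\varepsilon)_+\precsim y$, then gives $g\precsim hah\precsim a$.
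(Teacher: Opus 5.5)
You have a genuine gap, and it sits in the passage from your infinite element to an infinite projection. Your Step 1 is sound. It is also the point of the paper's remark: topological principality enters Anantharaman-Delaroche's argument only through her Lemma 2.3. Lemma \ref{lem: lemma for uniqueness theorem}, via Proposition \ref{prop: equivalent condition to effectiveness}, replaces it under effectiveness and gives a nonzero $g\in C_0(G^{(0)})_+$ with $g\precsim a$.

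\textbf{Step 2 does not give proper infiniteness.} What you actually obtain is $k\oplus k'\precsim k$ with $k'\neq 0$. This needs two small adjustments: take $U$ relatively compact so that $r(S\overline V)$ is closed, and take $k$ with open support $V$ so that $k'\precsim k$. That makes $k$ infinite, not properly infinite. To get $k\oplus k\precsim k$ you would need $k\precsim k'$, i.e.\ a way to move a copy of all of $V$ into $V\setminus r(S\overline V)$. Applying local contraction to the leftover set only contracts some smaller $V'$ into itself. It never transports $V$ there, so ``feeding back'' proves nothing.

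\textbf{Step 3 fails without simplicity.} \cite[Proposition 4.1.1]{Rordamclassification} is a characterization for simple C*-algebras. In general a properly infinite positive element need not produce any nonzero projection. For example, in $C_0((0,1])\otimes\mathcal{O}_2$ every nonzero positive element is properly infinite, yet there are no nonzero projections. So your opening reduction (``it is enough to prove that every nonzero positive element is properly infinite'') is wrong for the notion in the cited references. Your claim that simplicity is not needed is likewise unsupported. The non-simple statement (every nonzero hereditary subalgebra contains an infinite projection) requires building an actual projection inside $\overline{aAa}$ from the data $(V,S)$. Cuntz comparison of positive elements cannot supply that.

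\textbf{What your argument does prove is the simple case.} This is the only case covered by the paper's definition of pure infiniteness, and the only one it uses: in Theorem \ref{thm: main theorem for Cuntz algebra}, $C^*_r(\ker\sigma)$ is simple by minimality. There both defects can be repaired.
\begin{enumerate}
\item If $A=C^*_r(G)$ is simple, then $(k-\varepsilon)_+\precsim k'^{\oplus n}$ for some $n$. Iterating $k\oplus k'\precsim k$ gives $k\oplus (k-\varepsilon)_+\precsim k$ for every $\varepsilon>0$, hence $k\oplus k\precsim k$.
\item For any nonzero $a\geq 0$, Step 1 gives $k\precsim g\precsim a$. Simplicity gives $(a-\varepsilon)_+\precsim k^{\oplus m}$, so $(a-\varepsilon)_+\oplus(a-\varepsilon)_+\precsim k^{\oplus 2m}\precsim k\precsim a$ for all $\varepsilon>0$. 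Hence $a$ is properly infinite.
\item The simple-algebra characterization then yields an infinite projection in every nonzero hereditary subalgebra.
\end{enumerate}
You should either state your result in this form, or supply a genuine projection construction for the non-simple case.
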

	
	\begin{rem}
		In \cite[Proposition 2.4]{AnatharamanDelapurelyinfinite},
		the author assumes that $G$ is topologically principal (referred to as essentially free in \cite{AnatharamanDelapurelyinfinite}),
		rather than effective.
		The assumption that $G$ is topologically principal is used to apply \cite[Lemma 2.3]{AnatharamanDelapurelyinfinite}.
		However,
		this result remains valid under the weaker assumption that $G$ is effective by Proposition \ref{prop: equivalent condition to effectiveness}.
		Thus,
		the assumption of topological principality can be relaxed to effectiveness.
	\end{rem}

	\section{Automorphism groups of twisted groupoids}
	
	\subsection{Automorphism and cohomology class of twisted groupoid}
	
	In this section,
	when we refer to $(G,\Sigma)$ as a twisted groupoid,
	we assume that $G$ is a locally compact Hausdorff \'etale groupoid and $\Sigma$ is a twist over $G$.
	The main purpose of this section is to investigate the automorphism group $\Aut(\Sigma;G)$ of a twisted groupoid $(G,\Sigma)$ (Theorem \ref{theorem: main theorem:short exact sequence}).
	Combined with Taylor's result (Theorem \ref{thm: J.Taylors theorem}),
	we obtain a structure theorem for Cartan-preserving automorphism groups (Corollary \ref{cor: C*-analogue of Feldman moore}).

	\begin{defi}
		Let $(G,\Sigma)$ be a twisted groupoid.
		An automorphism of $(G,\Sigma)$ is an automorphism $\Phi\colon \Sigma\to \Sigma$ as a topological groupoid such that
		\[
		\Phi(z\delta)=z\Phi(\delta)
		\]
		for all $z\in\T$ and $\delta\in\Sigma$.
		We denote the set of all automorphisms of $(G,\Sigma)$ by $\Aut(\Sigma;G)$.
	\end{defi}
	
	\begin{rem}\label{rem def of widetilde{Phi}}
		For $\Phi\in\Aut(\Sigma;G)$,
		we have
		\[
		\Phi|_{G^{(0)}\times \T}=\Phi|_{G^{(0)}}\times \id_{\T}.
		\]
		In particular,
		since $\Phi$ maps $G^{(0)}\times \T$ onto itself,
		there exists a unique $\widetilde{\Phi}\in \Aut(G)$ such that the following diagram commutes:
		
		\begin{center}
			\begin{tikzpicture}[auto,scale=2]
				\node (Sig) at (0,0) {$\Sigma$};
				\node (G) at (2,0){$G$};
				\node (G') at (2,-1){$G$};
				\node (Sig') at (0,-1) {$\Sigma$};
				\node (torus) at (-2, 0) {$G^{(0)}\times\T$};
				\node (torus') at (-2, -1) {$G^{(0)}\times\T$};
				\draw[->] (Sig) to node {$\Phi$} (Sig') ;
				\draw[->] (Sig) to node {$q$} (G);
				\draw[->,swap] (Sig') to node {$q$} (G');
				\draw[->,swap] (torus) to node {$\Phi|_{G^{(0)}}\times \id_{\T}$} (torus');
				\draw[->] (torus) to node {$\iota$} (Sig);
				\draw[->,swap] (torus') to node {$\iota$} (Sig');
				\draw[->] (G) to node {$\widetilde{\Phi}$}(G');
			\end{tikzpicture}
			,
		\end{center}
		 where $q$ and $\iota$ denote the quotient and inclusion maps respectively.
		 One can see that the map
		 \[
		 \Aut(\Sigma;G)\ni \Phi\mapsto \widetilde{\Phi}\in \Aut(G)
		 \]
		 is a group homomorphism.
	\end{rem}
	
	\begin{defi}\label{defi: definition of cohomologous}
		Let $G$ be a locally compact Hausdorff \'etale groupoid.
		Assume that $\Sigma$ and $\Sigma'$ are twists over $G$.
		Then $\Sigma$ and $\Sigma'$ are said to be cohomologous if there exists an isomorphism $\Phi\colon\Sigma\to \Sigma'$ such that the following diagram commutes:
		
		\begin{center}
			\begin{tikzpicture}[auto,scale=1.5]
				\node (Sig) at (0,0) {$\Sigma$};
				\node (G) at (2,-1){$G$};
				\node (Sig') at (0,-2) {$\Sigma'$};
				\node (torus) at (-2, -1) {$G^{(0)}\times\T$};
				\draw[->] (Sig) to node {$\Phi$} (Sig') ;
				\draw[->] (Sig) to node {$q$} (G);
				\draw[->,swap] (Sig') to node {$q'$} (G);
				\draw[->] (torus) to node {$\iota$} (Sig);
				\draw[->,swap] (torus) to node {$\iota'$} (Sig');
			\end{tikzpicture}
			,
		\end{center}
		where $\iota$, $\iota'$ are the inclusion maps and $q$, $q'$ are the quotient maps.
	\end{defi}
	
	\begin{rem}
		We remark that,
		for $\Phi\in \Aut(\Sigma;G)$,
		the twist $\Sigma$ is cohomologous to itself via $\Phi$ if and only if $\widetilde{\Phi}$ in Remark \ref{rem def of widetilde{Phi}} is $\id_G$.
		In this case,
		we will later show that $\Phi\colon \Sigma\to \Sigma$ is of the form
		\[
		\Phi(\delta)=c(q(\delta))\delta,\,\,\,(\delta\in\Sigma)
		\]
		for some $c\in Z(G)$ in Theorem \ref{theorem: main theorem:short exact sequence}.
		
		We make another remark.
		Assume that $\Phi\colon \Sigma\to \Sigma'$ is an isomorphism as topological groupoids.
		If $G$ is effective and the left triangle of the diagram in Definition \ref{defi: definition of cohomologous} commutes,
		then $\Sigma$ and $\Sigma'$ are cohomologous via $\Phi$.
		Indeed,
		since $\Phi$ maps $G^{(0)}\times \T$ onto itself,
		we have $\widetilde{\Phi}\in\Aut(G)$ with
		\begin{center}
			\begin{tikzpicture}[auto,scale=2]
				\node (Sig) at (0,0) {$\Sigma$};
				\node (G) at (2,0){$G$};
				\node (G') at (2,-1){$G$};
				\node (Sig') at (0,-1) {$\Sigma'$};
				\node (torus) at (-2, -0.5) {$G^{(0)}\times\T$};
				\draw[->] (Sig) to node {$\Phi$} (Sig') ;
				\draw[->] (Sig) to node {$q$} (G);
				\draw[->,swap] (Sig') to node {$q'$} (G');
				\draw[->] (torus) to node {$\iota$} (Sig);
				\draw[->,swap] (torus) to node {$\iota'$} (Sig');
				\draw[->] (G) to node {$\widetilde{\Phi}$}(G');
			\end{tikzpicture}
			.
		\end{center}
		Since $\widetilde{\Phi}|_{G^{(0)}}=\id_{G^{(0)}}$,
		we obtain $\widetilde{\Phi}=\id_G$ by Proposition \ref{prop: restriction of autG to G0 is injective} if $G$ is effective.
		Hence the right triangle of the diagram in Definition \ref{defi: definition of cohomologous}  commutes.
		Thus, $\Sigma$ and $\Sigma'$ are cohomologous via $\Phi$.
	\end{rem}
	
	The notion of cohomology for twisted groupoids is compatible with that for 2-cocycles.
	
	\begin{prop}\label{prop: equivalence of cohomologous of twists and 2-cocycles}
		Let $G$ be a locally compact Hausdorff \'etale groupoid and $\Sigma_{\sigma}\defeq G\times_{\sigma}\T$ denote the twist over $G$ arising from a normalized 2-cocycle $\sigma\in Z^2(G)$.
		Then,
		for normalized 2-cocycles $\sigma, \sigma'\in Z^2(G)$,
		$\Sigma_{\sigma}$ and $\Sigma_{\sigma'}$ are cohomologous as twists if and only if $\sigma$ and $\sigma'$ are cohomologous as 2-cocycles.
	\end{prop}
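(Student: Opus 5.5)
Both implications come from comparing an isomorphism $\Phi\colon\Sigma_\sigma\to\Sigma_{\sigma'}$ with a normalized continuous function $b\colon G\to\T$. The dictionary is that such a $\Phi$, fixing $G^{(0)}\times\T$ and covering $\id_G$, has the form
\[
\Phi(\alpha,z)=(\alpha,b(\alpha)z).
\]
Since $\Phi$ is $\T$-equivariant, it is determined by the function $b(\alpha)\defeq$ second coordinate of $\Phi(\alpha,1)$. This $b$ is continuous, because $\Phi$ is continuous and the projection $G\times\T\to\T$ is continuous. Here I use that $\Sigma_\sigma=G\times\T$ topologically and that $\iota$ is the obvious inclusion.

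\emph{(Twists cohomologous $\Rightarrow$ cocycles cohomologous.)} Let $\Phi$ be an isomorphism as in Definition \ref{defi: definition of cohomologous}.

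1. Commutativity with $q,q'$ gives $\Phi(\alpha,1)=(\alpha,b(\alpha))$ for a function $b\colon G\to\T$, continuous by the remark above.
2. Commutativity with $\iota,\iota'$ gives $\Phi(x,1)=(x,1)$ for $x\in G^{(0)}$, so $b$ is normalized.
3. The compatibility $\Phi(z\delta)=z\Phi(\delta)$ follows from $\Phi$ being a groupoid homomorphism that fixes $\iota(G^{(0)}\times\T)$, since $z\delta=\iota(r(\delta),z)\delta$. Hence $\Phi(\alpha,z)=(\alpha,b(\alpha)z)$.
4. Apply $\Phi$ to the product $(\alpha,1)(\beta,1)=(\alpha\beta,\sigma(\alpha,\beta))$ and use multiplicativity:
\[
(\alpha\beta,\sigma'(\alpha,\beta)b(\alpha)b(\beta))=(\alpha\beta,b(\alpha\beta)\sigma(\alpha,\beta)).
\]
This gives $\sigma'\sigma^{-1}=\partial b$, so $\sigma\sigma'^{-1}=\partial(\overline{b})$ with $\overline{b}$ normalized and continuous.

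\emph{(Cocycles cohomologous $\Rightarrow$ twists cohomologous.)} Given a normalized continuous $b$ with $\sigma\sigma'^{-1}=\partial b$, define
\[
\Phi(\alpha,z)\defeq(\alpha,\overline{b(\alpha)}z).
\]

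1. $\Phi$ is a homeomorphism of $G\times\T$, with inverse $(\alpha,z)\mapsto(\alpha,b(\alpha)z)$.
2. $\Phi$ is multiplicative by reading the computation in step 4 above backwards.
3. $\Phi$ preserves units, inverses, $d$ and $r$. Units are preserved because $b$ is normalized; inverses then follow from multiplicativity.
4. The two triangles in Definition \ref{defi: definition of cohomologous} commute by inspection: $\Phi$ is the identity on $G^{(0)}\times\T$ since $b|_{G^{(0)}}=1$, and $\Phi$ preserves the first coordinate.

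\emph{Where the care is needed.} The argument is routine; the only steps needing attention are the continuity of $b$ and the sign convention in $\partial b$. The continuity of $b$ uses that the topology on $\Sigma_\sigma$ is the product topology. For the sign convention, check that $\partial\overline{b}=(\partial b)^{-1}$ matches the definition $\sigma\sigma'^{-1}=\partial b$. I would also verify that the inverse formula $(\alpha,z)^{-1}=(\alpha^{-1},\overline{\sigma(\alpha,\alpha^{-1})}\,\overline{z})$ is respected by $\Phi$. That is automatic once multiplicativity and preservation of units are known, because $\Phi$ is then a groupoid homomorphism.
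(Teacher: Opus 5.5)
Your approach is the same as the paper's: write $\Phi(\alpha,z)=(\alpha,b(\alpha)z)$ and compare products. However, the sign convention you flagged as the delicate point is in fact wrong, and the error breaks the converse direction. In step 4 of the first direction, the identity $\sigma'(\alpha,\beta)b(\alpha)b(\beta)=b(\alpha\beta)\sigma(\alpha,\beta)$ gives
\[
\sigma(\alpha,\beta)\overline{\sigma'(\alpha,\beta)}=b(\alpha)b(\beta)\overline{b(\alpha\beta)}=\partial b(\alpha,\beta),
\]
that is, $\sigma\sigma'^{-1}=\partial b$, not $\sigma'\sigma^{-1}=\partial b$. In that direction the slip does not affect the conclusion, since the witness is $b$ itself rather than $\overline{b}$. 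In the converse, however, your map $\Phi(\alpha,z)=(\alpha,\overline{b(\alpha)}z)$ satisfies
\[
\Phi\bigl((\alpha,z)(\beta,w)\bigr)=\bigl(\alpha\beta,\overline{b(\alpha\beta)}\sigma(\alpha,\beta)zw\bigr),\qquad \Phi(\alpha,z)\Phi(\beta,w)=\bigl(\alpha\beta,\sigma'(\alpha,\beta)\overline{b(\alpha)b(\beta)}zw\bigr).
\]
So it is multiplicative $\Sigma_\sigma\to\Sigma_{\sigma'}$ only if $\sigma\sigma'^{-1}=(\partial b)^{-1}$. Together with the hypothesis $\sigma\sigma'^{-1}=\partial b$, this forces $(\partial b)^2=1$, which fails in general. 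For example, take $G=\Z$, $b(n)=e^{in^2}$, $\sigma'=1$ and $\sigma=\partial b$. Then $\partial b(m,n)=e^{-2imn}$, so $(\partial b(1,1))^2=e^{-4i}\neq 1$. Hence your step 2 of the converse (multiplicativity ``by reading step 4 backwards'') is false as stated.

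The repair is a one-symbol change: define $\Phi(\alpha,z)\defeq(\alpha,b(\alpha)z)$, as the paper does. The same computation with $b$ in place of $\overline{b}$ shows this map is multiplicative exactly when $\sigma\sigma'^{-1}=\partial b$. Equivalently, your original map is an isomorphism $\Sigma_{\sigma'}\to\Sigma_\sigma$ compatible with the structure maps, and you could simply invert it. The rest of your proposal is correct: continuity and normalization of $b$, $\T$-equivariance derived from the left triangle (which the paper uses tacitly), preservation of units and inverses from multiplicativity instead of the paper's direct computation, and the commuting triangles.
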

	
	\begin{proof}
		Assume that $\sigma$ and $\sigma'$ are cohomologous.
		Then there exists a normalized continuous function $b\colon G\to \T$ such that $\sigma\sigma'^{-1}=\partial b$.
		Define a map $\Phi\colon \Sigma_{\sigma}\to \Sigma_{\sigma'}$ by
		\[
		\Phi(\alpha,z)\defeq (\alpha,b(\alpha)z)
		\]
		for $(\alpha, z)\in\Sigma_{\sigma}$.
		One can see that $\Phi$ is a groupoid homomorphism.
		Indeed,
		for $(\alpha,z), (\beta, w)\in \Sigma_{\sigma}$ with $d(\alpha)=r(\beta)$,
		we have
		\begin{align*}
			&\Phi((\alpha,z)(\beta, w))=\Phi(\alpha\beta, \sigma(\alpha,\beta)zw)=(\alpha\beta, b(\alpha\beta)\sigma(\alpha,\beta)zw),\\
			&\Phi(\alpha,z)\Phi(\beta,w)=(\alpha, b(\alpha)z)(\beta, b(\beta)w)=(\alpha\beta,\sigma'(\alpha,\beta)b(\alpha)b(\beta)zw).
		\end{align*}
		By $\sigma\sigma'^{-1}=\partial b$,
		we have
		\[
		\sigma(\alpha,\beta)=b(\alpha)\overline{b(\alpha\beta)}b(\beta)\sigma'(\alpha,\beta).
		\]
		Using this formula,
		we obtain 
		\[
		\Phi((\alpha,z)(\beta, w))=\Phi(\alpha,z)\Phi(\beta,w).
		\]
		In addition,
		we have
		\begin{align*}
	&	\Phi((\alpha,z)^{-1})=\Phi(\alpha^{-1}, \overline{\sigma(\alpha,\alpha)}\overline{z})=(\alpha^{-1},b(\alpha^{-1})\overline{\sigma(\alpha,\alpha)}\overline{z}),\\
	&	\Phi(\alpha,z)^{-1}=(\alpha, b(\alpha)z)^{-1}=(\alpha^{-1}, \overline{\sigma'(\alpha,\alpha^{-1})} \overline{b(\alpha)}\overline{z}).
		\end{align*}
	Using
	\[
	\sigma(\alpha, \alpha^{-1})\overline{b(\alpha^{-1})}=\sigma'(\alpha,\alpha^{-1})b(\alpha)\overline{b(\alpha\alpha^{-1})}
	\]
	and $b(\alpha\alpha^{-1})=1$,
	we obtain
	\[
	\Phi((\alpha,z)^{-1})=\Phi(\alpha,z)^{-1}.
	\]
	Now,
	one can check that $\Sigma_{\sigma}$ and $\Sigma_{\sigma'}$ are cohomologous via $\Phi$.
	
	Conversely,
	assume that $\Sigma_{\sigma}$ and $\Sigma_{\sigma'}$ are cohomologous.
	Take an isomorphism $\Phi\colon\Sigma_{\sigma}\to \Sigma_{\sigma'}$ with the following commutative diagram:
	\begin{center}
		\begin{tikzpicture}[auto,scale=1.5]
			\node (Sig) at (0,0) {$\Sigma_{\sigma}$};
			\node (G) at (2,-1){$G$};
			\node (Sig') at (0,-2) {$\Sigma_{\sigma'}$};
			\node (torus) at (-2, -1) {$G^{(0)}\times\T$};
			\draw[->] (Sig) to node {$\Phi$} (Sig') ;
			\draw[->] (Sig) to node {$q$} (G);
			\draw[->,swap] (Sig') to node {$q'$} (G);
			\draw[->] (torus) to node {$\iota$} (Sig);
			\draw[->,swap] (torus) to node {$\iota'$} (Sig');
		\end{tikzpicture}
		.
	\end{center}
	First,
	we claim that,
	for $\alpha\in G$,
	there exists a unique $b(\alpha)\in \T$ such that
	\[
	\Phi(\alpha, z)=(\alpha, b(\alpha)z)
	\]
	for all $z\in \T$.
	Since we have
	\[
	q'(\Phi(\alpha,1))=q(\alpha,1)=\alpha=q'((\alpha,1)),
	\]
	there exists a unique $b(\alpha)\in\T$ with
	\[
	\Phi(\alpha,1)=b(\alpha)(\alpha,1)=(\alpha, b(\alpha)).
	\]
	For $z\in \T$,
	we have
	\[
	\Phi(\alpha,z)=z\Phi(\alpha,1)=z(\alpha, b(\alpha))=(\alpha, b(\alpha)z).
	\]
	Now,
	we check $\sigma\sigma'^{-1}=\partial b$.
	Indeed,
	for $(\alpha,\beta)\in G^{(2)}$,
	we have
	\begin{align*}
		&\Phi((\alpha,1)(\beta,1))=(\alpha, b(\alpha))(\beta, b(\beta))=(\alpha\beta, b(\alpha)b(\beta)\sigma'(\alpha,\beta))\\
		=&\Phi(\alpha\beta,\sigma(\alpha,\beta))=(\alpha\beta,b(\alpha\beta)\sigma(\alpha,\beta)).
	\end{align*}
	Thus we obtain
	\[
	b(\alpha)b(\beta)\sigma'(\alpha,\beta)=b(\alpha\beta)\sigma(\alpha,\beta)
	\]
	and hence $\sigma\sigma'^{-1}=\partial b$.
	One can observe that $b\colon G\to \T$ is a continuous function satisfying $b|_{G^{(0)}}=1$ by the formula
	\[\Phi(\alpha, 1)=(\alpha, b(\alpha))\]
	for $\alpha\in G$.
	This completes the proof.
	\qed
	\end{proof}
	
	\begin{defi}
		Let $G$ be a locally compact Hausdorff \'etale groupoid.
		The set of the cohomology class of twists over $G$ is denoted by $\Tw(G)$.
	\end{defi}
	
	With a slight abuse of notation,
	we write $\Sigma$ for both a twist over $G$ and its cohomology class.
	
	\begin{rem}
		We note that $\Tw(G)$ is a group under the Baer sum (see \cite[Remark 2 in Section 4]{Kumjian} and the discussion following Definition 2.2 in \cite{Farsi_Gillaspy_twist_over_etale_groupoid} for more details).
		For $\Sigma,\Sigma'\in \Tw(G)$,
		the Baer sum $\Sigma\oplus\Sigma'\in\Tw(G)$ is defined as follows:
		Let $\iota,\iota',q, q'$ be the following structure maps:
		\begin{align*}
			&G^{(0)}\times \T\xrightarrow{\iota}\Sigma\xrightarrow{q}G,\\
			&G^{(0)}\times \T\xrightarrow{\iota'}\Sigma'\xrightarrow{q'}G.
		\end{align*}
		Put
		\[
			\Sigma\times_{q\times q'}\Sigma'\defeq \{(\delta, \delta')\in \Sigma\times \Sigma'\mid q(\delta)=q'(\delta')\}.
		\]
		Define an action $\T\curvearrowright \Sigma\times_{q\times q'}\Sigma'$ by
		\[
		z(\delta,\delta')=(z\delta, \overline{z}\delta')
		\]
		for $z\in\T$.
		Define
		\[
		\Sigma\oplus\Sigma'\defeq (\Sigma\times_{q\times q'}\Sigma')/\T.
		\]
		Let $Q\colon \Sigma\times_{q\times q'} \Sigma'\to \Sigma\oplus \Sigma'$ denote the quotient map and we write
		\[
		[\delta,\delta']\defeq Q(\delta,\delta')
		\]
		for $(\delta,\delta')\in \Sigma\times_{q\times q'} \Sigma'$.
		Then the structure maps of $\Sigma\oplus\Sigma'$ are
		\[
		G^{(0)}\times\T\xlongrightarrow{Q\circ (\iota,\iota')}\Sigma\oplus \Sigma' \xlongrightarrow{q}G,
		\]
		where $(\iota,\iota')$ is the following map
		\[
		(\iota,\iota')\colon G^{(0)}\times \T\ni w\mapsto (\iota(w),\iota'(w))\in \Sigma\times_{q\times q'} \Sigma'
		\]
		and $q([\delta,\delta'])\defeq q(\delta)$ for $[\delta,\delta']\in \Sigma\oplus \Sigma'$,
		although this is a slight abuse of notation.
		These operations are well-defined and it is known that $\Tw(G)$ is a group under this operation.
		We note that the inverse of $\Sigma\in\Tw(G)$ is given by
		\[
		G^{(0)}\times\T\xlongrightarrow{\iota\circ(\id_{G^{(0)}}\times \tau)}\Sigma\xlongrightarrow{q}G,
		\]
		where $\tau\in\Aut(\T)$ is the complex conjugation of $\T$,
		given by $\tau(z)=\overline{z}$ for $z\in \T$.
	\end{rem}
	
	\begin{defi}
		Let $(G, \Sigma)$ be a twisted groupoid and $\Psi\in\Aut(G)$.
		We define a twist $\Sigma_{\Psi}$ over $G$ as follows.
		As a topological groupoid,
		we set $\Sigma_{\Psi}\defeq \Sigma$.
		The structure maps of $\Sigma_{\Psi}$ are defined by
		\begin{center}
		\begin{tikzpicture}[auto,scale=2]
			\node (Sig) at (0,0) {$\Sigma$};
			\node (G) at (2,0) {$G$};
			\node (torus) at (-2,0) {$G^{(0)}\times \T$};
			\draw[->] (Sig) to node {$\Psi\circ q$} (G);
			\draw[->] (torus) to node {$\iota\circ (\Psi|_{G^{(0)}}^{-1}\times \id_{\T})$} (Sig);
		\end{tikzpicture}
		,
		\end{center}
		where $q$ and $\iota$ denote the quotient and inclusion maps of $(G,\Sigma)$ respectively.
		Namely,
		the quotient and inclusion maps of $\Sigma_{\Psi}$ are $\Psi\circ q$ and $\iota\circ (\Psi|_{G^{(0)}}^{-1}\times \id_{\T})$ respectively.
	\end{defi}

	\begin{rem}\label{rem: aut action on twist of cocycle is pull back cocycle}
		If $\Sigma\defeq \Sigma_{\sigma}$ arises from a normalized 2-cocycle $\sigma\in Z^2(G)$,
		then,
		for $\Psi\in\Aut(G)$,
		we have 
		\[(\Sigma_{\sigma})_{\Psi}\simeq \Sigma_{\sigma\circ (\Psi^{-1}\times \Psi^{-1})}.\]
		Indeed,
		take a continuous section $S\colon G\to \Sigma_{\sigma}$ of $q\colon \Sigma_{\sigma}\to G$.
		Note that we have
		\[
		S(\alpha)S(\beta)=\sigma(\alpha,\beta)S(\alpha\beta)
		\]
		for every composable pair $(\alpha,\beta)\in G^{(2)}$.
		Since the quotient map of $(\Sigma_{\sigma})_{\Psi}$ is
		\[
		\Psi\circ q\colon \Sigma_{\sigma}\to G,
		\]
		its section is given by $S'\defeq S\circ \Psi^{-1}\colon G\to (\Sigma_{\sigma})_{\Psi}$.
		For $(\alpha,\beta)\in G^{(2)}$,
		we have	
		\begin{align*}
		S'(\alpha)S'(\beta)&=S(\Psi^{-1}(\alpha))S(\Psi^{-1}(\beta)) \\
		&=\sigma(\Psi^{-1}(\alpha), \Psi^{-1}(\beta))S(\Psi^{-1}(\alpha\beta))\\
		&=\sigma(\Psi^{-1}(\alpha),\Psi^{-1}(\beta))S'(\alpha\beta).
		\end{align*}
		Hence we obtain
		\[(\Sigma_{\sigma})_{\Psi}\simeq \Sigma_{\sigma\circ(\Psi^{-1}\times \Psi^{-1})}.\]	
		Thus,
		$\Sigma_{\Psi}$ may be viewed as a generalization of the pull-back of a normalized 2-cocycle along $\Psi^{-1}\times \Psi^{-1}$.
	\end{rem}
	
	\begin{prop}\label{prop: left action of AutG on TwG}
		For $\Psi_1,\Psi_2\in \Aut(G)$ and $\Sigma\in\Tw(G)$,
		it follows
		\[
		\Sigma_{\Psi_2\circ \Psi_1}=(\Sigma_{\Psi_1})_{\Psi_2}
		\]
		in $\Tw(G)$.
		In particular,
		the map
		\[
		\Aut(G)\times \Tw(G)\ni (\Psi,\Sigma)\mapsto \Sigma_{\Psi}\in \Tw(G)
		\]
		defines a left action $\Aut(G)\curvearrowright \Tw(G)$.
	\end{prop}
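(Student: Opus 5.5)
The plan is to unwind the definition of $\Sigma_{\Psi}$ and observe that $\Sigma_{\Psi_2\circ\Psi_1}$ and $(\Sigma_{\Psi_1})_{\Psi_2}$ are literally the same twist. Both have underlying topological groupoid $\Sigma$, so it suffices to compare their structure maps. By definition, $\Sigma_{\Psi_1}$ has quotient map $\Psi_1\circ q$ and inclusion map $\iota\circ(\Psi_1|_{G^{(0)}}^{-1}\times\id_{\T})$. Applying the construction once more with $\Psi_2$, the twist $(\Sigma_{\Psi_1})_{\Psi_2}$ has quotient map $\Psi_2\circ\Psi_1\circ q$ and inclusion map
\[
\iota\circ(\Psi_1|_{G^{(0)}}^{-1}\times\id_{\T})\circ(\Psi_2|_{G^{(0)}}^{-1}\times\id_{\T})=\iota\circ((\Psi_2\circ\Psi_1)|_{G^{(0)}}^{-1}\times\id_{\T}).
\]
These are exactly the structure maps of $\Sigma_{\Psi_2\circ\Psi_1}$, so the identity map of $\Sigma$ witnesses that the two twists are cohomologous in the sense of Definition \ref{defi: definition of cohomologous}. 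We also note that $\Sigma_\Psi$ is indeed a twist: local trivializations of $\Sigma$ over $U$ transport to local trivializations over $\Psi(U)$ via $\phi\circ\Psi^{-1}$, and the centrality condition is unchanged because the $\T$-action is the same.

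The next step, and the only point needing real care, is that the map $(\Psi,\Sigma)\mapsto\Sigma_\Psi$ is well defined on cohomology classes. Suppose $\Theta\colon\Sigma\to\Sigma'$ is an isomorphism commuting with the inclusions $\iota,\iota'$ and the quotients $q,q'$. We check that the same $\Theta$ is an isomorphism $\Sigma_\Psi\to\Sigma'_\Psi$ of twists. For the quotient maps, $\Psi\circ q'\circ\Theta=\Psi\circ q$. For the inclusions, $\Theta\circ\iota\circ(\Psi|_{G^{(0)}}^{-1}\times\id_{\T})=\iota'\circ(\Psi|_{G^{(0)}}^{-1}\times\id_{\T})$. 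Hence the class of $\Sigma_\Psi$ depends only on the class of $\Sigma$.

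Finally, $\Sigma_{\id_G}=\Sigma$ on the nose. Combined with the composition identity above, this gives the left action axioms, completing the proof. I expect no genuine obstacle; the main thing to get right is the order of composition of the inverses on $G^{(0)}$, which is what makes the action a left action rather than a right one.
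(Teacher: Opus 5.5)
Your proposal is correct and takes the same approach as the paper, which also just unwinds the structure maps of $(\Sigma_{\Psi_1})_{\Psi_2}$ and observes that they coincide with those of $\Sigma_{\Psi_2\circ\Psi_1}$ (the paper writes this with the indices swapped). Your additional checks that $\Sigma_\Psi$ is a twist, that the construction is well defined on cohomology classes, and that $\Sigma_{\id_G}=\Sigma$ are points the paper leaves implicit.
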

	
	\begin{proof}
		Take $\Psi_1,\Psi_2\in \Aut(G)$ and $\Sigma\in\Tw(G)$ arbitrarily.
		Let $\iota, q$ denote the inclusion and quotient map associated with $\Sigma$ respectively.
		Since $\Sigma_{\Psi_2}$ is
			\begin{center}
			\begin{tikzpicture}[auto,scale=2]
				\node (Sig) at (0,0) {$\Sigma$};
				\node (G) at (2,0) {$G$};
				\node (torus) at (-2,0) {$G^{(0)}\times \T$};
				\draw[->] (Sig) to node {$\Psi_2\circ q$} (G);
				\draw[->] (torus) to node {$\iota\circ (\Psi_2|_{G^{(0)}}^{-1}\times \id_{\T})$} (Sig);
			\end{tikzpicture}
			,
		\end{center}
		one can see that $(\Sigma_{\Psi_2})_{\Psi_1}$ is given by
				\begin{center}
				\begin{tikzpicture}[auto,scale=2]
					\node (Sig) at (0,0) {$\Sigma$};
					\node (G) at (2,0) {$G$};
					\node (torus) at (-3,0) {$G^{(0)}\times \T$};
					\draw[->] (Sig) to node {$\Psi_1\circ\Psi_2\circ q$} (G);
					\draw[->] (torus) to node {{\small$\iota\circ ((\Psi_1\circ\Psi_2|_{G^{(0)}})^{-1}\times \id_{\T})$}} (Sig);
				\end{tikzpicture}
				.
			\end{center}
			This is nothing but $\Sigma_{\Psi_1\circ\Psi_2}$.
			\qed
	\end{proof}

	\begin{prop}
		For $\Psi\in \Aut(G)$,
		it follows
		\[
		(\Sigma\oplus \Sigma')_{\Psi}=\Sigma_{\Psi}\oplus\Sigma'_{\Psi}
		\]
		in $\Tw(G)$.
		In particular,
		\[\Tw(G)\ni \Sigma\mapsto \Sigma_{\Psi}\in \Tw(G)\]
		defines a group automorphism on $\Tw(G)$ and the left action in Proposition \ref{prop: left action of AutG on TwG} is an action by group automorphisms.
		 
	\end{prop}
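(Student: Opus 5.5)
The plan is to write down an explicit isomorphism of twists
\[
\Theta\colon (\Sigma\oplus\Sigma')_{\Psi}\to \Sigma_{\Psi}\oplus\Sigma'_{\Psi}
\]
over $G$ and check that it intertwines the structure maps, so that the two twists are cohomologous in the sense of Definition \ref{defi: definition of cohomologous}.

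First I would unwind both sides. As a topological groupoid, $(\Sigma\oplus\Sigma')_{\Psi}$ is just $\Sigma\oplus\Sigma'=(\Sigma\times_{q\times q'}\Sigma')/\T$. Its quotient map is $[\delta,\delta']\mapsto \Psi(q(\delta))$, and its inclusion is $(x,z)\mapsto Q(\iota(\Psi^{-1}(x),z),\iota'(\Psi^{-1}(x),z))$.

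On the other side, $\Sigma_{\Psi}$ and $\Sigma'_{\Psi}$ have quotient maps $\Psi\circ q$ and $\Psi\circ q'$. The fibered product $\Sigma\times_{\Psi q\times \Psi q'}\Sigma'$ therefore equals $\Sigma\times_{q\times q'}\Sigma'$ as a set and as a topological groupoid, since $\Psi$ is a bijection. The $\T$-action $z(\delta,\delta')=(z\delta,\overline{z}\delta')$ is also literally the same, because the $\T$-multiplication on $\Sigma_\Psi$ is $\iota(\Psi^{-1}(r(\delta)),z)\delta=z\delta$ (using $r(\delta)$ viewed in $G^{(0)}$ via the new identification). So the two quotients $(\Sigma\times\Sigma')/\T$ coincide as topological groupoids, and I would take $\Theta$ to be the identity map on this common underlying space.

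Next I would check compatibility of the structure maps. The quotient map of $\Sigma_\Psi\oplus\Sigma'_\Psi$ is $[\delta,\delta']\mapsto(\Psi\circ q)(\delta)$, which agrees with the one above. For the inclusion, the composite $Q\circ(\iota_\Psi,\iota'_\Psi)$ sends $(x,z)$ to $Q(\iota(\Psi^{-1}(x),z),\iota'(\Psi^{-1}(x),z))$, which also agrees. Hence the diagram of Definition \ref{defi: definition of cohomologous} commutes with $\Theta=\id$, and equality holds in $\Tw(G)$.

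I should also verify that the construction is well defined on classes. If $\Sigma$ is cohomologous to $\Sigma_1$ via $\Phi$, then the same $\Phi$ exhibits $\Sigma_\Psi$ as cohomologous to $(\Sigma_1)_\Psi$. This holds because $\Psi\circ q_1\circ\Phi=\Psi\circ q$, and similarly for the inclusions.

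For the \emph{in particular} part, the identity just proved shows that $\Sigma\mapsto\Sigma_\Psi$ is a group homomorphism of $\Tw(G)$. By Proposition \ref{prop: left action of AutG on TwG}, the map for $\Psi^{-1}$ is a two-sided inverse, since $\Sigma_{\id_G}=\Sigma$ trivially. So each map is an automorphism, and the action is by group automorphisms.

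The main point needing care, rather than a genuine obstacle, is the bookkeeping of identifications of unit spaces. $\Sigma_\Psi$ identifies $G^{(0)}$ with $\Sigma^{(0)}$ through $\Psi^{-1}$, so I must check that the fibered-product condition and the $\T$-action really are unchanged under this identification, and not twisted by $\Psi$.
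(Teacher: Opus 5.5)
Your proposal is correct and follows the same route as the paper: both use the identity map $[\delta,\delta']\mapsto[\delta,\delta']$ on the common underlying groupoid $(\Sigma\times_{q\times q'}\Sigma')/\T$, and both observe that the two quotient maps equal $[\delta,\delta']\mapsto\Psi(q(\delta))$ and the two inclusions equal $Q\circ(\iota,\iota')\circ(\Psi|_{G^{(0)}}^{-1}\times\id_{\T})$. Your additional checks fill in details the paper leaves implicit: that the fibered product and the $\T$-action are unchanged, that $\Sigma\mapsto\Sigma_{\Psi}$ respects cohomology classes, and that $\Psi^{-1}$ provides the inverse.
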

	
	\begin{proof}
		Let $\iota$ and $q$ denote the inclusion and quotient map of $\Sigma$.
		In addition,
		let $\iota'$ denote the inclusion map of $\Sigma'$.
		Then $(\Sigma\oplus \Sigma')_{\Psi}$ is
		\[
		G^{(0)}\times \T\xlongrightarrow{Q\circ(\iota,\iota')\circ\Psi^{-1}}(\Sigma\oplus \Sigma')_{\Psi}\xlongrightarrow{\Psi\circ q} G
		\]
		and $\Sigma_{\Psi}\oplus \Sigma'_{\Psi}$ is
		\[
		G^{(0)}\times \T\xlongrightarrow{Q\circ (\iota\circ\Psi^{-1},\iota'\circ\Psi^{-1})}\Sigma_{\Psi}\oplus \Sigma'_{\Psi}\xlongrightarrow{\Psi\circ q} G.
		\]
		Comparing the above structure maps,
		we see that,
		via the following map
		\[
		\Phi\colon (\Sigma\oplus\Sigma')_{\Psi}\ni[\delta,\delta']\mapsto [\delta,\delta']\in \Sigma_{\Psi}\oplus \Sigma_{\Psi}',
		\]
		$(\Sigma\oplus\Sigma')_{\Psi}$ and $\Sigma_{\Psi}\oplus \Sigma_{\Psi}'$ are cohomologous.
		\qed		
	\end{proof}

	\begin{defi}
		For $\Sigma\in\Tw(G)$,
		the stabilizer group of the action $\Aut(G)\curvearrowright \Tw(G)$ at $\Sigma$ is denoted by $\Aut(G)_{\Sigma}$.
	\end{defi}

	\begin{rem}\label{rem: aut(G)Sigma sigma coincides with cocycle preserving automorphisms}
		Consider the twist $\Sigma_{\sigma}$ arising from a normalized 2-cocycle $\sigma\in Z^2(G)$.
		Then $\Aut(G)_{\Sigma_{\sigma}}$ coincides with
		\[
		\Aut(G)_{\sigma}\defeq \{\Psi\in \Aut(G)\mid \text{$\sigma$ and $\sigma\circ(\Psi^{-1}\times \Psi^{-1})$ are cohomologous}\}.
		\]
		Indeed,
		for $\Psi\in\Aut(G)$,
		the twists $(\Sigma_{\sigma})_{\Psi}$ and $\Sigma_{\sigma\circ(\Psi^{-1}\times \Psi^{-1})}$ are cohomologous by Remark \ref{rem: aut action on twist of cocycle is pull back cocycle}.
		Therefore, $\Sigma_{\sigma}$ and $(\Sigma_{\sigma})_{\Psi}$ are cohomologous if and only if $\sigma$ and $\sigma\circ(\Psi^{-1}\times \Psi^{-1})$ are cohomologous as 2-cocycles by Proposition \ref{prop: equivalence of cohomologous of twists and 2-cocycles}.
		Thus we obtain
		\[\Aut(G)_{\Sigma_{\sigma}}=\Aut(G)_{\sigma}.\]
		We remark that the group $\Aut(G)_{\sigma}$ is an analogue of the automorphism groups considered in \cite[Section 4]{Feldman_Moore_II},
		where the authors work in the setting of von Neumann algebras.
		Indeed,
		for a Borel equivalence relation $R$ and the cohomology class of a 2-cocycle $\sigma$ on $R$,
		the authors of \cite{Feldman_Moore_II} defined in Section 4 the group $N(R,\sigma)$ of automorphisms of $R$ that fix the cohomology class of $\sigma$.
		Thus $\Aut(G)_{\sigma}$ can be regarded as a topological analogue of $N(R,\sigma)$.
	\end{rem}
	
	\begin{prop}\label{prop: group hom from ZG to Aut(Sig G)}
		Let $(G,\Sigma)$ be a twisted groupoid.
		For $c\in Z(G)$,
		define a map $\Phi_c\colon \Sigma\to \Sigma$ by
		\[
		\Phi_c(\delta)\defeq c(q(\delta))\delta
		\]
		for $\delta\in\Sigma$,
		where $q\colon \Sigma\to G$ denotes the quotient map of $\Sigma$.
		Then $\Phi_c\in \Aut(\Sigma;G)$ and $\Sigma$ is cohomologous to itself via $\Phi_c$.
		In addition,
		the map
		\[
		i\colon Z(G)\ni c\mapsto \Phi_c\in  \Aut(\Sigma;G)
		\]
		is an injective group homomorphism.
	\end{prop}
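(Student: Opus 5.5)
The plan is to verify directly, from the definitions of a twist and of $\Aut(\Sigma;G)$, that each $\Phi_c$ is a $\T$-equivariant automorphism of the topological groupoid $\Sigma$ lying over $\id_G$. Then I will check that $c\mapsto\Phi_c$ respects products and has trivial kernel. Throughout I write $z\delta=\iota(r(\delta),z)\delta=\delta\iota(d(\delta),z)$, as in the definition of a twist. Recall that the central $\T$-action commutes with multiplication in $\Sigma$.

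\emph{Continuity and the groupoid structure.} The map $\Phi_c$ is the composite
\[
\Sigma\to\T\times\Sigma\to\Sigma,\qquad \delta\mapsto (c(q(\delta)),\delta)\mapsto c(q(\delta))\delta .
\]
The first map is continuous because $c$ and $q$ are. The second is continuous because $(z,\delta)\mapsto\iota(r(\delta),z)\delta$ is built from continuous groupoid operations. Hence $\Phi_c$ is continuous.

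For $(\delta,\epsilon)\in\Sigma^{(2)}$, centrality of $\T$ and the homomorphism property $c(q(\delta\epsilon))=c(q(\delta))c(q(\epsilon))$ give
\[
\Phi_c(\delta)\Phi_c(\epsilon)=c(q(\delta))c(q(\epsilon))\,\delta\epsilon=\Phi_c(\delta\epsilon).
\]
Equivariance is immediate: $\Phi_c(z\delta)=c(q(\delta))z\delta=z\Phi_c(\delta)$, since $q(z\delta)=q(\delta)$. A similar one-line computation shows $\Phi_c\circ\Phi_{c'}=\Phi_{cc'}$. Together with $\Phi_1=\id_\Sigma$, this shows that $\Phi_c$ is bijective with continuous inverse $\Phi_{\bar c}$. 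So $\Phi_c\in\Aut(\Sigma;G)$, and $i$ is a group homomorphism.

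\emph{Cohomologous to itself.} First, $q\circ\Phi_c=q$, because multiplying by an element of $\T$ does not change the image under $q$. Second, $c$ is a groupoid homomorphism, so it is trivial on units: $c(x)=1$ for $x\in G^{(0)}$. Hence for $(x,z)\in G^{(0)}\times\T$,
\[
\Phi_c(\iota(x,z))=c(x)\,\iota(x,z)=\iota(x,z),
\]
so $\Phi_c\circ\iota=\iota$. Both triangles in Definition \ref{defi: definition of cohomologous} therefore commute.

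\emph{Injectivity.} Suppose $\Phi_c=\id_\Sigma$. Given $\alpha\in G$, pick $\delta\in q^{-1}(\alpha)$. Then $c(\alpha)\delta=\delta$. The map $\T\ni z\mapsto z\delta$ is injective, since it is a homeomorphism onto $q^{-1}(\{\alpha\})$, so $c(\alpha)=1$. As $\alpha$ was arbitrary, $c\equiv 1$.

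No step is a real obstacle; the only point needing care is the continuity of $\Phi_c$, which I handle via the continuity of the $\T$-action $\T\times\Sigma\to\Sigma$ coming from $\iota$ and multiplication.
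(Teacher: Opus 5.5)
Your proposal is correct and takes the same route as the paper. The paper's proof just says the result follows from a straightforward calculation and writes out only the $\T$-equivariance check $\Phi_c(z\delta)=z\Phi_c(\delta)$. You carry out the remaining verifications correctly: continuity, multiplicativity, $\Phi_c\circ\Phi_{c'}=\Phi_{cc'}$, the two commuting triangles, and injectivity via freeness of the $\T$-action.
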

	\begin{proof}
		This follows from a straightforward calculation.
		For example,
		one can observe
		\[
		\Phi_c(z\delta)=c(q(z\delta))z\delta=c(q(\delta))z\delta=z\Phi_c(\delta)
		\]
		for $\delta\in \Sigma$ and $z\in \T$ to show $\Phi_c\in \Aut(\Sigma;G)$.
		\qed
	\end{proof}
	
	Now,
	we are ready to show our first main theorem.
		
	\begin{thm}\label{theorem: main theorem:short exact sequence}
		Let $(G,\Sigma)$ be a twisted groupoid.
		Then the group homomorphism
		\[
		Q\colon \Aut(\Sigma;G)\ni \Phi\mapsto \widetilde{\Phi}\in\Aut(G)
		\]
		in Remark \ref{rem def of widetilde{Phi}} induces the following short exact sequence of groups:
			\begin{center}
			\begin{tikzpicture}[auto,scale=1.5]
				\node (AutSigG) at (0,0) {$\Aut(\Sigma;G)$};
				\node (AutGsig) at (2,0) {$\Aut(G)_{\Sigma}$};
				\node (ZG) at (-2,0) {$Z(G)$};
				\node (right0) at (4,0) {$0$};
				\node (left0) at (-4,0) {$0$};
				\draw[->] (AutSigG)  to node {{\small $Q$}} (AutGsig);
				\draw[->] (ZG) to node {{\small $i$}} (AutSigG);
				\draw[->] (AutGsig) to (right0);
				\draw[->] (left0) to (ZG);
			\end{tikzpicture}
			\end{center}
		Here,
		$i$ is the injective group homomorphism in Proposition \ref{prop: group hom from ZG to Aut(Sig G)}.
	\end{thm}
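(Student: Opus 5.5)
We need to establish four facts: $Q$ takes values in $\Aut(G)_{\Sigma}$; $Q$ is onto $\Aut(G)_{\Sigma}$; $\ker Q=i(Z(G))$; and $i$ is injective. The last is Proposition \ref{prop: group hom from ZG to Aut(Sig G)}. The homomorphism property of $Q$ is Remark \ref{rem def of widetilde{Phi}}.

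\emph{Image of $Q$.} For $\Phi\in\Aut(\Sigma;G)$ we show that $\Phi$ itself, viewed as a map $\Sigma\to\Sigma_{\widetilde\Phi}$, is a cohomology of twists. By definition of $\widetilde\Phi$ we have $q\circ\Phi=\widetilde\Phi\circ q$, which is the right triangle, since the quotient map of $\Sigma_{\widetilde\Phi}$ is $\widetilde\Phi\circ q$. For the left triangle we need $\Phi\circ\iota=\iota\circ(\widetilde\Phi|_{G^{(0)}}^{-1}\times\id_\T)$. This seems to have the wrong direction. So I would instead use $\Phi^{-1}\colon\Sigma\to\Sigma_{\widetilde\Phi}$, or equivalently check the convention carefully. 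From $\Phi(\iota(x,z))=\iota(\Phi(x),z)$ we get $\Phi^{-1}\circ\iota=\iota\circ(\widetilde\Phi|_{G^{(0)}}^{-1}\times\id)$, which is the inclusion of $\Sigma_{\widetilde\Phi}$. However, $\widetilde\Phi\circ q\circ\Phi^{-1}=q$ is not quite the needed triangle either. So the right object to compare is $\Phi\colon\Sigma_{\widetilde\Phi}\to\Sigma$, or its inverse $\Sigma\to\Sigma_{\widetilde\Phi}$, whichever direction makes both triangles commute. In that direction the quotient map $\widetilde\Phi\circ q$ pulls back to $q$ and the inclusion matches. Thus $\Sigma_{\widetilde\Phi}=\Sigma$ in $\Tw(G)$, that is, $\widetilde\Phi\in\Aut(G)_\Sigma$.

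\emph{Surjectivity.} Given $\Psi\in\Aut(G)_\Sigma$, choose a cohomology $\Theta\colon\Sigma_\Psi\to\Sigma$. This is a homeomorphic groupoid isomorphism of the underlying groupoid $\Sigma$ satisfying
\[
q\circ\Theta=\Psi\circ q
\qquad\text{and}\qquad
\Theta\circ\iota\circ(\Psi|_{G^{(0)}}^{-1}\times\id_\T)=\iota .
\]
The second identity shows that $\Theta(\iota(x,z))=\iota(\Psi(x),z)$, and hence that $\Theta(z\delta)=z\Theta(\delta)$. So $\Theta\in\Aut(\Sigma;G)$, and the first identity gives $\widetilde\Theta=\Psi$.

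\emph{Kernel.} Clearly $Q(\Phi_c)=\id_G$. Conversely, suppose $\widetilde\Phi=\id_G$. Then for each $\delta$ the elements $\Phi(\delta)$ and $\delta$ lie in the same fibre $q^{-1}(q(\delta))$. Hence there is a unique $b(\delta)\in\T$ with $\Phi(\delta)=b(\delta)\delta$. By $\T$-equivariance $b(z\delta)=b(\delta)$, so $b=c\circ q$ for a function $c\colon G\to\T$. Multiplicativity of $\Phi$ together with centrality of $\T$ gives $c(\alpha\beta)=c(\alpha)c(\beta)$.

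The main technical point is continuity of $c$. Locally on an open bisection $U$, take the section $\phi$ from local triviality. Then $\Phi(\phi(\alpha))=c(\alpha)\phi(\alpha)$, and $c(\alpha)$ is the $\T$-coordinate of $\Phi\circ\phi(\alpha)$ under the homeomorphism $\T\times U\cong q^{-1}(U)$. This coordinate is continuous in $\alpha$. Hence $c\in Z(G)$ and $\Phi=\Phi_c$, which completes exactness.
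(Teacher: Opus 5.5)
Your proposal is correct and has the same structure as the paper's proof. Membership of $\widetilde\Phi$ in $\Aut(G)_{\Sigma}$ comes from viewing $\Phi$ as a cohomology $\Sigma_{\widetilde\Phi}\to\Sigma$. Surjectivity comes from inverting a cohomology between $\Sigma$ and $\Sigma_{\Psi}$. Finally, $\ker Q=i(Z(G))$ follows from the fibrewise scalar $\Phi(\delta)=b(\delta)\delta$.

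The kernel step differs slightly from the paper. You get constancy of $b$ on $q$-fibres directly from $\T$-equivariance, and continuity of $c$ from the local trivializations. The paper instead observes that $\delta\mapsto\Phi(\delta)\delta^{-1}$ is a continuous homomorphism $\Sigma\to\T$. It shows this homomorphism is trivial on $G^{(0)}\times\T$ using $\Phi(x)=x$, and then uses that $q$ is a quotient map. Your version avoids that quotient-map lemma and is slightly more self-contained.

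The first step, however, should be cleaned up. Commit to $\Phi\colon\Sigma_{\widetilde\Phi}\to\Sigma$, for which both triangles are immediate:
$q\circ\Phi=\widetilde\Phi\circ q$ and $\Phi\circ\iota\circ(\widetilde\Phi|_{G^{(0)}}^{-1}\times\id_{\T})=\iota$.

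Two of your intermediate remarks there are wrong:
\begin{enumerate}
\item $q\circ\Phi=\widetilde\Phi\circ q$ is the right triangle for the direction $\Sigma_{\widetilde\Phi}\to\Sigma$, not for $\Sigma\to\Sigma_{\widetilde\Phi}$.
\item $\widetilde\Phi\circ q\circ\Phi^{-1}=q$ is not ``not quite'' what is needed. It is exactly the right triangle for $\Phi^{-1}\colon\Sigma\to\Sigma_{\widetilde\Phi}$. Together with your identity $\Phi^{-1}\circ\iota=\iota\circ(\widetilde\Phi|_{G^{(0)}}^{-1}\times\id_{\T})$, it already settles that step.
\end{enumerate}
The hedge ``whichever direction makes both triangles commute'' should be replaced by this explicit verification.
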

	
	\begin{proof}
		First,
		we show that $Q$ is well-defined.
		Take $\Phi\in\Aut(\Sigma;G)$ and we show $\widetilde{\Phi}\in\Aut(G)_{\Sigma}$.
		We have the following commutative diagram:
			\begin{center}
			\begin{tikzpicture}[auto,scale=2]
				\node (Sig) at (0,0) {$\Sigma$};
				\node (G) at (2,0){$G$};
				\node (G') at (2,-1){$G$};
				\node (Sig') at (0,-1) {$\Sigma$};
				\node (torus) at (-2, 0) {$G^{(0)}\times\T$};
				\node (torus') at (-2, -1) {$G^{(0)}\times\T$};
				\draw[->] (Sig) to node {$\Phi$} (Sig') ;
				\draw[->] (Sig) to node {$q$} (G);
				\draw[->,swap] (Sig') to node {$q$} (G');
				\draw[->,swap] (torus) to node {$\Phi|_{G^{(0)}}\times \id_{\T}$} (torus');
				\draw[->] (torus) to node {$\iota$} (Sig);
				\draw[->,swap] (torus') to node {$\iota$} (Sig');
				\draw[->] (G) to node {$\widetilde{\Phi}$}(G');
			\end{tikzpicture}
			,
		\end{center}
		where $\iota$ and $q$ are the inclusion and quotient maps of $(G,\Sigma)$.
		Rewriting this diagram,
		we obtain the following one:
			\begin{center}
			\begin{tikzpicture}[auto,scale=2]
				\node (Sig) at (0,0) {$\Sigma$};
				\node (G) at (2,0){$G$};
				\node (G') at (2,-1){$G$};
				\node (Sig') at (0,-1) {$\Sigma$};
				\node (torus) at (-2, 0) {$G^{(0)}\times\T$};
				\node (torus') at (-2, -1) {$G^{(0)}\times\T$};
				\draw[->] (Sig) to node {$\Phi$} (Sig') ;
				\draw[->] (Sig) to node {$\widetilde{\Phi}\circ q$} (G);
				\draw[->,swap] (Sig') to node {$q$} (G');
				\draw[->,swap] (torus) to node {$\id_{G^{(0)}\times \T}$} (torus');
				\draw[->] (torus) to node {{\small $\iota\circ(\Phi^{-1}|_{G^{(0)}}\times\id_{\T})$}} (Sig);
				\draw[->,swap] (torus') to node {$\iota$} (Sig');
				\draw[->] (G) to node {$\id_G$}(G');
			\end{tikzpicture}
			.
		\end{center}
		Remark that the upper horizontal sequence is the twist $(G, \Sigma_{\widetilde{\Phi}})$.
		Hence this diagram shows that $\Sigma$ and $\Sigma_{\widetilde{\Phi}}$ are cohomologous.
		In particular, we obtain $\widetilde{\Phi}\in \Aut(G)_{\Sigma}$.
		
		Next, we show that $Q$ is surjective.
		Take $\Psi\in\Aut(G)_{\Sigma}$.
		Since $\Sigma$ and $\Sigma_{\Psi}$ are cohomologous,
		there exists an isomorphism $\Phi\colon\Sigma\to \Sigma_{\Psi}$ with the following commutative diagram:
		\begin{center}
			\begin{tikzpicture}[auto,scale=1.5]
				\node (Sig) at (0,0) {$\Sigma$};
				\node (G) at (2,-1){$G$};
				\node (Sig') at (0,-2) {$\Sigma$};
				\node (torus) at (-2, -1) {$G^{(0)}\times\T$};
				\draw[->] (Sig) to node {$\Phi$} (Sig') ;
				\draw[->] (Sig) to node {$q$} (G);
				\draw[->,swap] (Sig') to node {$\Psi\circ q$} (G);
				\draw[->] (torus) to node {$\iota$} (Sig);
				\draw[->,swap] (torus) to node {$\iota\circ (\Psi^{-1}|_{G^{(0)}})\times \id_{\T}$} (Sig');
			\end{tikzpicture}
			.
		\end{center}
		Rewriting this diagram,
		we obtain the following:
		\begin{center}
			\begin{tikzpicture}[auto,scale=2]
				\node (Sig) at (0,0) {$\Sigma$};
				\node (G) at (2,0){$G$};
				\node (G') at (2,-1){$G$};
				\node (Sig') at (0,-1) {$\Sigma$};
				\node (torus) at (-2, 0) {$G^{(0)}\times\T$};
				\node (torus') at (-2, -1) {$G^{(0)}\times\T$};
				\draw[->] (Sig) to node {$\Phi^{-1}$} (Sig') ;
				\draw[->] (Sig) to node {$q$} (G);
				\draw[->,swap] (Sig') to node {$q$} (G');
				\draw[->,swap] (torus) to node {$\Psi|_{G^{(0)}}\times\id_{\T}$} (torus');
				\draw[->] (torus) to node {{\small $\iota$}} (Sig);
				\draw[->,swap] (torus') to node {$\iota$} (Sig');
				\draw[->] (G) to node {$\Psi$}(G');
			\end{tikzpicture}
			.
		\end{center}
		This diagram shows that $\Phi^{-1}\in \Aut(\Sigma;G)$ and $\Psi=\widetilde{\Phi^{-1}}$.
		Hence $Q$ is surjective.
		
		It is easy to check $i(Z(G))\subset \ker Q$.
		We show the reverse inclusion and take $\Phi\in\ker Q$.
		Since we have $q(\Phi(\delta))=q(\delta)$ for $\delta\in\Sigma$ by $\widetilde{\Phi}=\id_G$,
		there uniquely exists $\widetilde{c}(\delta)\in\T$ with $\Phi(\delta)=\widetilde{c}(\delta)\delta$.
		It is straightforward to see that $\widetilde{c}\colon \Sigma\to \T$ is a homomorphism.
		Since we have $\widetilde{c}(\delta)=\Phi(\delta)\delta^{-1}$,
		$\widetilde{c}$ is continuous.
		We show that $\widetilde{c}$ factors through $q\colon \Sigma\to G$.
		It suffices to show $\widetilde{c}|_{G^{(0)}\times \T}=1$.
		Take $(x,z)\in G^{(0)}\times \T$.
		Then we have
		\[
		\widetilde{c}(x,z)zx=\widetilde{c}(x,z)(x,z)=\Phi(x,z)=z\Phi(x)=zx
		\]
		since $\Phi(x,z)=z\Phi(x)$ by $\Phi\in \Aut(\Sigma; G)$ and $\Phi(x)=\widetilde{\Phi}(x)=x$ by $\widetilde{\Phi}=\id_G$.
		Hence we obtain $\widetilde{c}(x,z)=1$.
		Therefore there exists $c\colon G\to \T$ with $\widetilde{c}=c\circ q$.
		Since the topology of $G$ coincides with the quotient topology of $q\colon \Sigma\to G$ by \cite[Lemma 2.7]{Armstrong_uniqueness_theorem_for_twistedgroupoid},
		$c$ is continuous and we obtain $c\in Z(G)$.
		Thus, we obtain $\Phi=\Phi_c$ and this completes the proof.
		\qed
	\end{proof}

	The following theorem is a special case of \cite[Theorem 5.10]{Jonarhan2025essentialCartan}.
	Indeed,
	the author of \cite{Jonarhan2025essentialCartan} treats not necessarily Hausdorff groupoids,
	while we only consider the Hausdorff case.
	
	\begin{thm}[{\cite[Theorem 5.10]{Jonarhan2025essentialCartan}}]\label{thm: J.Taylors theorem}
		
		Let $G$ be a locally compact Hausdorff effective \'etale groupoid and $\Sigma$ be a twist over $G$.
		Then $\Aut(C^*_r(\Sigma;G); C_0(G^{(0)}))$ is isomorphic to $\Aut(\Sigma;G)$.
		More precisely,
		for $\Phi\in \Aut(\Sigma;G)$,
		$\varphi_{\Phi}\in \Aut(C^*_r(\Sigma; G); C_0(G^{(0)}))$ is defined by
		\[
		\varphi_{\Phi}(f)(\delta)\defeq f(\Phi^{-1}(\delta))
		\]
		for $f\in C_c(\Sigma; G)$ and $\delta\in \Sigma$.
		Then 
		\[
		\varphi\colon \Aut(\Sigma; G)\ni \Phi\mapsto \varphi_{\Phi}\in \Aut(C^*_r(\Sigma; G); C_0(G^{(0)}))
		\]
		is a group isomorphism.
	\end{thm}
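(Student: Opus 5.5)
The proof has three parts. First, $\varphi_\Phi$ is a well-defined $*$-automorphism of $C^*_r(\Sigma;G)$ preserving $C_0(G^{(0)})$, and $\varphi$ is a homomorphism. Second, $\varphi$ is injective. Third, and hardest, $\varphi$ is surjective, which requires reconstructing $\Phi$ from an automorphism $\varphi$ with $\varphi(C_0(G^{(0)}))=C_0(G^{(0)})$.

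\textbf{Well-definedness.} Let $f\in C_c(\Sigma;G)$. Then $f\circ\Phi^{-1}$ is continuous with compact support, and it is $\T$-equivariant because $\Phi^{-1}(z\delta)=z\Phi^{-1}(\delta)$. Since $\Phi$ is a groupoid automorphism commuting with the $\T$-action, it carries the Haar system $\{\mu^x\}$ to $\{\mu^{\Phi(x)}\}$. From this one sees that $f\mapsto f\circ\Phi^{-1}$ preserves convolution and involution. For the norm, define a unitary
\[
U_x\colon L^2(\Sigma_x;G_x)\to L^2(\Sigma_{\Phi(x)};G_{\Phi(x)}),\qquad U_x\xi\defeq\xi\circ\Phi^{-1}.
\]
It intertwines $\lambda_x(f)$ with $\lambda_{\Phi(x)}(\varphi_\Phi(f))$. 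Taking the supremum over $x$ shows that $\varphi_\Phi$ is isometric for $\lVert\cdot\rVert_r$, so it extends to the completion. On $C_0(G^{(0)})$ it acts as $h\mapsto h\circ\Phi|_{G^{(0)}}^{-1}$, hence preserves it. The homomorphism identity $\varphi_{\Phi\Psi}=\varphi_\Phi\varphi_\Psi$ is immediate, and it also gives invertibility.

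\textbf{Injectivity.} Suppose $\varphi_\Phi=\id$. Then $\Phi|_{G^{(0)}}=\id$, so $\widetilde\Phi=\id_G$ by Proposition \ref{prop: restriction of autG to G0 is injective}. By Theorem \ref{theorem: main theorem:short exact sequence}, $\Phi=\Phi_c$ for some $c\in Z(G)$. In that case $\varphi_\Phi(f)=\overline{c\circ q}\cdot f$. Choose $f$ supported on $q^{-1}(U)$ with $U$ a small bisection around an arbitrary $\alpha\in G$ and $f\neq0$ at $\alpha$. Comparing gives $c(\alpha)=1$, so $c\equiv1$.

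\textbf{Surjectivity.} Let $\varphi\in\Aut(C^*_r(\Sigma;G);C_0(G^{(0)}))$. Its restriction to $C_0(G^{(0)})$ is given by a homeomorphism $h$ of $G^{(0)}$. Since $\varphi$ maps normalizers to normalizers, Proposition \ref{prop: characterization of normalizer} (here effectiveness is used) shows that $\varphi$ sends an element with open-bisection support $q(\insupp(n))=U$ to one with open-bisection support $U'$. I would build $\widetilde\Phi\colon G\to G$ via Renault's Weyl groupoid picture: germs of normalizers. For $\alpha\in U=q(\insupp(n))$, define $\widetilde\Phi(\alpha)$ to be the unique point of $q(\insupp(\varphi(n)))$ with range $h(r(\alpha))$. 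This must be shown independent of $n$, using the compatibility $\varphi(n)^*\varphi(m)=\varphi(n^*m)\in C_0(G^{(0)})$ together with effectiveness. The same data yields the lift $\Phi$ on $\Sigma$: the normalized value $\varphi(n)(\cdot)/\lvert\varphi(n)(\cdot)\rvert$ at the relevant point is the twisting phase, exactly as in Renault's reconstruction of the twist. Then one checks that $\Phi$ is a continuous, $\T$-equivariant groupoid automorphism. Finally $\varphi_\Phi$ and $\varphi$ agree on $C_0(G^{(0)})$ and on all normalizers, so they agree everywhere by regularity.

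The main obstacle is this surjectivity step: well-definedness of the germ map and of the phase lift, and the verification of continuity. I would import these from Renault's reconstruction theorem \cite{renault} (or \cite{Ali_Raad_generalization_of_Renault} in the non-separable case). That theorem functorially identifies $(\Sigma,G)$ with the Weyl twist of $C_0(G^{(0)})\subset C^*_r(\Sigma;G)$, so $\varphi$ induces an automorphism of the Weyl twist. Transporting it back gives $\Phi$. Alternatively, one can simply cite \cite[Theorem 5.10]{Jonarhan2025essentialCartan}, of which this statement is the Hausdorff special case.
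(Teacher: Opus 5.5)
Your proposal is correct and agrees with the paper, which gives no independent proof of this statement: it simply cites Taylor's \cite[Theorem 5.10]{Jonarhan2025essentialCartan} as the Hausdorff special case. Your surjectivity step rests on that same citation, or equivalently on the Renault--Raad Weyl-twist reconstruction, and your direct checks of well-definedness (via the intertwining unitaries $U_x$), of the homomorphism property and of injectivity are sound additions; injectivity does not even need effectiveness or Theorem \ref{theorem: main theorem:short exact sequence}, since the $\T$-equivariant functions in $C_c(\Sigma;G)$ already separate the points of $\Sigma$.
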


	\begin{cor}\label{cor: C*-analogue of Feldman moore}
		Let $G$ be a locally compact Hausdorff effective \'etale groupoid and $\Sigma$ be a twist over $G$.
		Put
		\[
		\mathfrak{W}_{C^*_r(\Sigma; G), C_0(G^{(0)})}\defeq \Aut(C^*_r(\Sigma; G); C_0(G^{(0)}))/\Aut_{C_0(G^{(0)})}(C^*_r(\Sigma;G)).
		\]
		Then $\mathfrak{W}_{C^*_r(\Sigma; G), C_0(G^{(0)})}$ is isomorphic to $\Aut(G)_{\Sigma}$ via an isomorphism $\widetilde{\varphi}$ with the following commutative diagram:
		\begin{center}
			\begin{tikzpicture}[auto,scale=2]
				\node (AutC*Gsig) at (0,-1) {$\Aut(C^*_r(\Sigma;G); C_0(G^{(0)}))$};
				\node (Weyl) at (2,-1){$\mathfrak{W}_{C^*_r(\Sigma; G), C_0(G^{(0)})}$};
				\node (AutGsig) at (2,0){$\Aut(G)_{\Sigma}$};
				\node (AutsigG) at (0,0) {$\Aut(\Sigma;G)$};
				\node (Autc0C*Gsig) at (-2.2, -1) {$\Aut_{C_0(G^{(0)})}(C^*_r(\Sigma;G))$};
				\node (ZG) at (-2.2, 0) {$Z(G)$};
				\draw[->] (AutC*Gsig) to node {$q$} (Weyl) ;
				\draw[->] (Autc0C*Gsig) to node {$\iota$} (AutC*Gsig);
				\draw[->] (AutsigG) to node {$Q$} (AutGsig);
				\draw[->] (ZG) to node {$i$} (AutsigG);
				\draw[<-,swap] (Weyl) to node {$\widetilde{\varphi}$} (AutGsig);
				\draw[<-,swap] (AutC*Gsig) to node {$\varphi$} (AutsigG);
				\draw[<-] (Autc0C*Gsig) to node {$\varphi\circ i$}(ZG);
			\end{tikzpicture}
			,
		\end{center}
		where $\varphi$ is the isomorphism in Theorem \ref{thm: J.Taylors theorem},
		$i$ and $Q$ are the homomorphisms in Theorem \ref{theorem: main theorem:short exact sequence},
		and $\iota$, $q$ are the inclusion and quotient maps respectively.
		
	\end{cor}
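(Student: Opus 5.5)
The plan is to transport the short exact sequence of Theorem \ref{theorem: main theorem:short exact sequence} along Taylor's isomorphism $\varphi$ of Theorem \ref{thm: J.Taylors theorem}. Since $\varphi\colon \Aut(\Sigma;G)\to\Aut(C^*_r(\Sigma;G);C_0(G^{(0)}))$ is a group isomorphism, the key claim is
\[
\varphi(i(Z(G)))=\Aut_{C_0(G^{(0)})}(C^*_r(\Sigma;G)).
\]
Given this, $\varphi$ induces an isomorphism of quotients
\[
\Aut(\Sigma;G)/i(Z(G))\cong \mathfrak{W}_{C^*_r(\Sigma;G),C_0(G^{(0)})}.
\]
Since $Q$ is surjective with kernel $i(Z(G))$, the source is identified with $\Aut(G)_{\Sigma}$. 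We then define $\widetilde{\varphi}(Q(\Phi))\defeq q(\varphi_\Phi)$. This is well defined and injective precisely because $\ker Q=i(Z(G))=\varphi^{-1}(\ker q)$. Surjectivity comes from surjectivity of $\varphi$ and $q$. Commutativity of both squares then holds by construction.

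It remains to prove the key equality, which I do by showing two inclusions at the level of $\Aut(\Sigma;G)$.

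\textbf{First inclusion.} Take $c\in Z(G)$ and $f\in C_c(G^{(0)})$, viewed as $\widetilde f(x,z)=zf(x)$ supported on $\iota(G^{(0)}\times\T)$. On $G^{(0)}\times\T$ we have $\Phi_c^{-1}(x,z)=\overline{c(x)}(x,z)=(x,z)$, because $c|_{G^{(0)}}=1$ for a homomorphism. Hence $\varphi_{\Phi_c}(\widetilde f)=\widetilde f$. By continuity and density of $C_c(G^{(0)})$ in $C_0(G^{(0)})$, the automorphism $\varphi_{\Phi_c}$ fixes $C_0(G^{(0)})$ pointwise.

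\textbf{Second inclusion.} Let $\Phi\in\Aut(\Sigma;G)$ with $\varphi_\Phi$ fixing $C_0(G^{(0)})$ pointwise. For $f\in C_c(G^{(0)})$ the formula gives
\[
\varphi_\Phi(\widetilde f)(x,z)=\widetilde f(\Phi^{-1}(x,z))=z f(\Phi^{-1}(x)).
\]
Here I use Remark \ref{rem def of widetilde{Phi}}, namely $\Phi|_{G^{(0)}\times\T}=\Phi|_{G^{(0)}}\times\id_\T$. So $f\circ\Phi^{-1}|_{G^{(0)}}=f$ for all $f\in C_c(G^{(0)})$. By Urysohn's lemma these functions separate points, so $\Phi|_{G^{(0)}}=\id$. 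Therefore $\widetilde\Phi|_{G^{(0)}}=\id_{G^{(0)}}$. Since $G$ is effective, Proposition \ref{prop: restriction of autG to G0 is injective} gives $\widetilde\Phi=\id_G$, i.e.\ $\Phi\in\ker Q=i(Z(G))$ by Theorem \ref{theorem: main theorem:short exact sequence}.

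The main obstacle is this second inclusion. It is the only place where effectiveness enters beyond Taylor's theorem: we must pass from triviality on the unit space to triviality on all of $G$, which is supplied by Proposition \ref{prop: restriction of autG to G0 is injective}. The remainder of the argument is diagram bookkeeping.
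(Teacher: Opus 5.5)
Your proposal is correct and follows essentially the same route as the paper. The paper likewise reduces everything to the equality $\varphi\circ i(Z(G))=\Aut_{C_0(G^{(0)})}(C^*_r(\Sigma;G))$, citing the five lemma where you do the quotient bookkeeping by hand. It proves the reverse inclusion exactly as you do, by deducing $\widetilde{\Phi}|_{G^{(0)}}=\id_{G^{(0)}}$ and then applying Proposition~\ref{prop: restriction of autG to G0 is injective}. You also write out two steps the paper calls straightforward: the first inclusion and the point-separation argument.
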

	
	\begin{proof}
		By Theorem \ref{theorem: main theorem:short exact sequence} and the definition of $\mathfrak{W}_{C^*_r(\Sigma; G), C_0(G^{(0)})}$,
		both horizontal short sequences are exact.
		In addition,
		$i$ and $\iota$ are injective, and $Q$ and $q$ are surjective.
		Hence it suffices to show 
		\[
		\varphi\circ i(Z(G))=\Aut_{C_0(G^{(0)})}(C^*_r(\Sigma;G))
		\]
		to conclude that $\tilde{\varphi}$ is an isomorphism by the five lemma.
		The inclusion
		\[
		\varphi\circ i(Z(G))\subset\Aut_{C_0(G^{(0)})}(C^*_r(\Sigma;G))
		\]
		follows from a straightforward calculation.
		To show the reverse inclusion,
		take $\psi\in \Aut_{C_0(G^{(0)})}(C^*_r(\Sigma; G))$.
		There exists $\Phi\in\Aut(\Sigma; G)$ with $\psi=\varphi_{\Phi}$ since $\varphi$ is surjective by Theorem \ref{thm: J.Taylors theorem}.
		To show $\psi=\varphi_{\Phi}\in \varphi\circ i(Z(G))$,
		it suffices to see $Q(\Phi)=\id_G$ by Theorem \ref{theorem: main theorem:short exact sequence},
		where $Q(\Phi)=\widetilde{\Phi}\in \Aut(G)_{\Sigma}$ is the automorphism in Theorem \ref{theorem: main theorem:short exact sequence}.
		Since we have $\varphi_{\Phi}(f)=f$ for all $f\in C_0(G^{(0)})$,
		it follows $\widetilde{\Phi}|_{G^{(0)}}=\id_{G^{(0)}}$.
		Since we assume that $G$ is effective and $\widetilde{\Phi}\in \Aut(G)$,
		we obtain $\widetilde{\Phi}=\id_G$ by Proposition \ref{prop: restriction of autG to G0 is injective}.
		This completes the proof.
		\qed
	\end{proof}

	\begin{rem}
		We remark that Corollary \ref{cor: C*-analogue of Feldman moore} can be viewed as a C*-algebraic analogue of \cite[Theorem 3]{Feldman_Moore_II},
		where the authors proved a corresponding statement in the von Neumann algebraic setting.
		If $\Sigma=\Sigma_{\sigma}$ arises from a normalized 2-cocycle $\sigma\in Z^2(G)$,
		then $\Aut(G)_{\Sigma}$ is isomorphic to $\Aut(G)_{\sigma}$ by Remark \ref{rem: aut(G)Sigma sigma coincides with cocycle preserving automorphisms}.
		In this case,
		the situation is closer to that considered in \cite[Theorem 3]{Feldman_Moore_II}.
		We also remark that,
		if $G$ is ample and $\sigma$-compact,
		then every twist over $G$ has a continuous section and hence arises from a 2-cocycle by \cite[Corollary 3.5]{BonickeK-theoryandhomotopies}.
		In addition,
		it is known that there exists a twist over a locally compact Hausdorff \'etale groupoid which does not arise from a normalized 2-cocycle (see \cite[Section 3]{ArmstrongNgAbraham_notrivial_twists}, for example).
		
		Finally,
		if a twist $\Sigma=G\times \T$ is trivial,
		then we obtain
		\[
		\Aut(C^*_r(G);C_0(G^{(0)}))\simeq \Aut(G)\ltimes Z(G),
		\]
		where the right hand side is the semidirect product with respect to the action $\Aut(G)\curvearrowright \Z(G)$ induced by the composition.
		This has already been proved in \cite[Corollary 3.2.2]{Komura2026*-hom}.
		We remark that the notation in \cite{Komura2026*-hom} is slightly different from that used in this paper.
		
	\end{rem}
	
	At the end of this section,
	we prove that $\varphi\circ i$ in Corollary \ref{cor: C*-analogue of Feldman moore} is an isomorphism as topological groups.
	Recall that the topology of $Z(G)$ is the compact-open topology,
	which coincides with the topology of uniform convergence on compact sets,
	while $\Aut_{C_0(G^{(0)})}(C^*_r(\Sigma;G))$ is equipped with the pointwise norm topology.
	
	\begin{prop}\label{prop: Z(G) is isom to Aut as top groups}
		The group isomorphism $\varphi\circ i\colon Z(G)\to	\Aut_{C_0(G^{(0)})}(C^*_r(\Sigma;G))$ in Corollary \ref{cor: C*-analogue of Feldman moore} is a homeomorphism and hence $\varphi\circ i$ is an isomorphism as topological groups.
	\end{prop}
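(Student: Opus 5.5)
Since $\varphi\circ i$ is already a group isomorphism by Corollary \ref{cor: C*-analogue of Feldman moore}, it suffices to show that it and its inverse are continuous. Both $Z(G)$ and $\Aut_{C_0(G^{(0)})}(C^*_r(\Sigma;G))$ are topological groups, so it is enough to check continuity at the identity in each direction. First I will record the concrete formula. For $c\in Z(G)$ and $f\in C_c(\Sigma;G)$ we have $\Phi_c^{-1}(\delta)=\overline{c(q(\delta))}\delta$, hence
\[
\varphi_{\Phi_c}(f)(\delta)=\overline{c(q(\delta))}f(\delta).
\]
So $\varphi\circ i(c)$ acts on $C_c(\Sigma;G)$ as pointwise multiplication by the continuous function $\overline{c}\circ q$.

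\emph{Continuity of $\varphi\circ i$.} Let $c_\lambda\to 1$ uniformly on compact subsets of $G$. Since the automorphisms are isometric and $C_c(\Sigma;G)$ is dense, it suffices to show $\lVert\varphi_{\Phi_{c_\lambda}}(f)-f\rVert\to0$ for each $f\in C_c(\Sigma;G)$. Using the local triviality of $q$ and a partition of unity, write $f$ as a finite sum $\sum_k f_k$ with $q(\supp f_k)$ contained in a compact subset of an open bisection. Each $f_k$ is then a normalizer by Proposition \ref{prop: characterization of normalizer}, and so is $(\overline{c_\lambda}\circ q-1)f_k$. By the remark after that proposition, the reduced norm of a normalizer equals its sup norm. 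This gives
\[
\lVert \varphi_{\Phi_{c_\lambda}}(f)-f\rVert\le\sum_k \sup_{\supp f_k}\lvert \overline{c_\lambda}\circ q-1\rvert\,\lVert f_k\rVert_\infty,
\]
and the right-hand side tends to $0$ because $q(\supp f_k)$ is compact.

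\emph{Continuity of the inverse.} This is the harder direction. Suppose $\varphi_{\Phi_{c_\lambda}}\to\id$ pointwise in norm, and fix a compact $K\subset G$ and $\varepsilon>0$. Cover $K$ by finitely many open bisections $U_k$, choose compact sets $K_k\subset U_k$ with $K=\bigcup K_k$, and choose local sections $\phi_k\colon U_k\to\Sigma$. Take $g_k\in C_c(U_k)$ with $0\le g_k\le1$ and $g_k=1$ on $K_k$, and define $f_k\in C_c(\Sigma;G)$ by $f_k(z\phi_k(\alpha))=zg_k(\alpha)$. Again $f_k$ is a normalizer. Because the $j$-map is norm-decreasing into $C_0(\Sigma;G)$ with the sup norm, we get, for $\alpha\in K_k$,
\[
\lvert \overline{c_\lambda(\alpha)}-1\rvert=\lvert(\varphi_{\Phi_{c_\lambda}}(f_k)-f_k)(\phi_k(\alpha))\rvert\le\lVert\varphi_{\Phi_{c_\lambda}}(f_k)-f_k\rVert.
\]
The right-hand side is eventually less than $\varepsilon$ for all $k$ simultaneously, since there are only finitely many $k$. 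Hence $c_\lambda\to1$ uniformly on $K$, which is convergence in the compact-open topology.

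The main obstacle I anticipate is justifying the norm-to-sup-norm comparisons, together with the bookkeeping that splits an arbitrary $f\in C_c(\Sigma;G)$ into finitely many bisection-supported normalizers via local sections. Once this is done, each direction is a direct estimate.
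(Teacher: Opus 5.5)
Your proof is correct and follows essentially the same route as the paper: in both directions you test on elements supported over a single open bisection (built from local sections of $q$), where the reduced norm is compared with the supremum norm. The differences are minor. You split $f$ by a partition of unity instead of citing density of the span of bisection-supported functions, and in the converse you only need the inequality $\lVert\cdot\rVert_\infty\le\lVert\cdot\rVert$ from the norm-decreasing $j$-map rather than the normalizer norm identity. Your formula $\varphi_{\Phi_c}(f)=(\overline{c}\circ q)f$ also correctly includes the conjugate that the paper's stated formula drops; this is harmless there, since only $\lvert c_\lambda-c\rvert$ enters.
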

	\begin{proof}
		Take a net $\{c_{\lambda}\}_{\lambda\in\Lambda}\subset Z(G)$ and $c\in Z(G)$.
		We put $\varphi_c\defeq \varphi(i(c))$ and $\varphi_{c_{\lambda}}\defeq\varphi(i(c_{\lambda}))$.
		Note that $\varphi_c\in \Aut_{C_0(G^{(0)})}(C^*_r(\Sigma;G))$ satisfies
		\[
		\varphi_c(f)(\delta)=c(q(\delta))f(\delta)
		\]
		for $f\in C_c(\Sigma;G)$ and $\delta\in\Sigma$,
		where $q\colon \Sigma\to G$ denotes the quotient map.
		To show that $\varphi\circ i$ is a homeomorphism,
		it suffices to show that $\{c_{\lambda}\}$ converges to $c$ if and only if $\{\varphi_{c_{\lambda}}\}_{\lambda\in \Lambda}$ converges to $\varphi_c$ since $\varphi\circ i$ is a bijection.
		
		First,
		we show that $\{\varphi_{c_{\lambda}}\}_{\lambda\in \Lambda}$ converges to $\varphi_c$ if $\{c_{\lambda}\}$ converges to $c$.
		It suffices to show $\{\varphi_{c_{\lambda}}(f)\}_{\lambda\in \Lambda}$ converges to $\varphi_c(f)$ in $C^*_r(\Sigma;G)$ for all $f\in C_c(\Sigma;G)$ such that $q(\insupp(f))\subset G$ is a bisection,
		since the linear span of such elements is dense in $C^*_r(\Sigma;G)$ by \cite[Lemma 2.13]{Armstrong_uniqueness_theorem_for_twistedgroupoid}.
		Since we have
		\[
		\insupp(\varphi_{c_{\lambda}}(f))=\insupp(\varphi_{c}(f))=\insupp(f)
		\]
		for all $\lambda\in\Lambda$ and $q(\insupp(f))$ is a bisection,
		$\varphi_{c_{\lambda}}(f)-\varphi_c(f)$ is a normalizer by Proposition \ref{prop: characterization of normalizer} and hence its reduced norm coincides with the supremum norm.
		Thus we obtain
		\begin{align*}
			\lVert \varphi_{c_{\lambda}}(f)-\varphi_{c}(f)\rVert&=\sup_{\delta\in \Sigma}\lvert c_{\lambda}(q(\delta))f(\delta)-c(q(\delta))f(\delta)\rvert \\
			&\leq \lVert f\rVert_{\infty} \sup_{\alpha\in q(\supp(f))}\lvert c_{\lambda}(\alpha)-c(\alpha)\rvert\to 0 &(\text{as $\lambda\to \infty$}),
		\end{align*}
		where $\lVert \cdot\rVert_{\infty}$ denotes the supremum norm.
		
		Next,
		we show the converse.
		Consider a compact set $K\subset G$.
		Using the compactness of $K$,
		we may take open bisections $V_i,U_i\subset G$ for $i=1,2,\dots n$ so that
		\begin{itemize}
			\item each $\overline{V_i}$ is compact and $\overline{V_i}\subset U_i$,
			\item for $i=1,2,\dots,n$,
			there exist continuous sections $\phi_i\colon U_i\to q^{-1}(U_i)$ of the quotient map $q|_{q^{-1}(U_i)}$ such that
			\[
			\T\times U_i\ni (z,\alpha)\mapsto z\phi_i(\alpha)\in q^{-1}(U_i)
			\]
			are homeomorphisms, and
			\item $K\subset \bigcup_{i=1}^n V_i$.
		\end{itemize}
		For each $i=1,2,\cdots,n$,
		take $f_i\in C_c(U_i)$ such that $f_i|_{\overline{V_i}}=1$ and $0\leq f_i\leq 1$.
		Define $\widetilde{f_i}\in C_c(\Sigma;G)$ by
		\begin{align*}
			\widetilde{f_i}(\delta)\defeq
			\begin{cases}
				zf_i(\alpha) & (\delta\in q^{-1}(U_i)),\\
				0 & (\text{otherwise}),
			\end{cases}
		\end{align*}
		where $(z,\alpha)\in \T\times U_i$ is the unique element satisfying $\delta=z\phi_i(\alpha)$.
		Since the open support of $\varphi_{c_{\lambda}}(\widetilde{f_i})-\varphi_c(f_i)$ is a bisection,
		its reduced norm coincides with the supremum norm.
		Hence,
		\begin{align*}
			\lVert \varphi_{c_{\lambda}}(\widetilde{f_i})-\varphi_c(\widetilde{f_i})\rVert&=\sup_{\delta\in \Sigma}\lvert \varphi_{c_{\lambda}}(\widetilde{f_i})(\delta)-\varphi_c(\widetilde{f_i})(\delta)\rvert \\
			&=\sup_{(z,\alpha)\in \T\times U_i}\lvert c_{\lambda}(\alpha)zf_i(\alpha)-c(\alpha)zf_i(\alpha)\rvert \\
			&\geq \sup_{\alpha\in \overline{V_i}}\lvert c_{\lambda}(\alpha)-c(\alpha)\rvert.
		\end{align*}
		In addition,
		since we have $K\subset \bigcup_{i=1}^n \overline{V_i}$ and the above inequality,
		we obtain
		\begin{align*}
		\sup_{\alpha\in K}\lvert c_{\lambda}(\alpha)-c(\alpha)\rvert&\leq \sup_{i=1,\dots,n}\sup_{\alpha\in \overline{V_i}}\lvert c_{\lambda}(\alpha)-c(\alpha)\rvert\\
		&\leq \sup_{i=1,\dots,n}\lVert\varphi_{c_{\lambda}}(\widetilde{f_i})-\varphi_c(\widetilde{f_i})\rVert.
		\end{align*}
		Since we assume that $\{\varphi_{c_{\lambda}}\}_{\lambda\in\Lambda}$ converges to $\varphi$ in the pointwise norm topology,
		$\{c_{\lambda}\}_{\lambda\in \Lambda}$ converges to $c$ uniformly on $K$.
		Hence $\{c_{\lambda}\}_{\lambda\in\Lambda}$ converges to $c$ in $Z(G)$.
		\qed
	\end{proof}

	\section{Compact abelian subgroups of 1-cocycle groups $Z(G)$}
	
	From Corollary \ref{cor: C*-analogue of Feldman moore},
	it follows that the group of Cartan-fixing automorphisms is isomorphic to $Z(G)$ and hence abelian.
	In this section,
	we investigate compact abelian subgroups of the group of Cartan-fixing automorphisms.
	First, we establish a realization theorem for compact abelian groups (Theorem \ref{Theorem: existence of universal Cartan subalgebra of Kirchberg algebra}).
	Next, we prove an obstruction theorem for expansive groupoids (Theorem \ref{theorem every compact subgroup of Cartan fixing are fin gen}).

	\subsection{Realization of compact abelian groups}
	
	Our purpose in this subsection is to prove Theorem \ref{Theorem: existence of universal Cartan subalgebra of Kirchberg algebra}.
	Our strategy is as follows.
	First, we show Theorem \ref{Theorem: existence of universal Cartan subalgebra of Kirchberg algebra} for the Cuntz algebra $\mathcal{O}_{\infty}$.
	Then we apply $\mathcal{O}_{\infty}$-absorption theorem and obtain Theorem \ref{Theorem: existence of universal Cartan subalgebra of Kirchberg algebra} for the general Kirchberg case.
	
	\subsubsection{The case of the Cuntz algebra $\mathcal{O}_{\infty}$} \label{subsubsection:Cuntz algebra case}
	
	The Cuntz algebra $\mathcal{O}_{\infty}$ is the universal unital C*-algebra generated by $\{S_i\}_{i=1}^{\infty}$ such that
	\[
	S_i^*S_j=
	\begin{cases}
		1 & (i=j),\\
		0 & (i\not=j)
	\end{cases}
	\]
	for $i,j\in\N\setminus\{0\}$.
	Put $\Sigma\defeq \N\setminus\{0\}$ and $\Sigma^*\defeq \bigcup_{n\in\N}\Sigma^n$,
	which is the set of all finite sequences on $\Sigma$.
	Note that $\Sigma^0$ is a singleton whose unique element $\varepsilon$ is called the empty word.
	When $\mu\in\Sigma^k$,
	we define the length of $\mu$ as $\lvert \mu\rvert \defeq k$.
	For $\mu\in\Sigma^k$,
	put $S_{\mu}\defeq S_{\mu_1}S_{\mu_2}\cdots S_{\mu_k}\in\mathcal{O}_{\infty}$.
	Then we have
	\[
	\mathcal{O}_{\infty}=\overline{\Span\{S_{\mu}S_{\nu}^*\mid \mu,\nu\in \Sigma^*\}}.
	\]
	We put
	\[
	D_{\infty}\defeq \overline{\Span\{S_{\mu}S_{\mu}^*\mid \mu\in\Sigma^*\}}.
	\]
	Then $D_{\infty}$ is a commutative subalgebra of $\mathcal{O}_{\infty}$.
	Moreover,
	$D_{\infty}\subset \mathcal{O}_{\infty}$ is a Cartan subalgebra since this inclusion has a groupoid model which is effective and \'etale.
	We next recall this groupoid model of $\mathcal{O}_{\infty}$.
	See \cite{Paterson2002} or \cite[Example 2.2.7]{Komurasubmod} for details.
	
	We keep the above notation $\Sigma\defeq \N\setminus\{0\}$ and $\Sigma^*\defeq \bigcup_{n\in\N}\Sigma^n$.
	Let $P_{\infty}$ denote the Polycyclic monoid of infinite degree.
	Namely,
	$P_{\infty}$ is the universal inverse semigroup defined by
	\[
	P_{\infty}\defeq \i<\{s_i\}_{i=1}^{\infty},0,1 \mid s_i^*s_j=\delta_{i,j}1>.
	\]
	Remark that
	\[
	P_{\infty}=\{s_{\mu}s_{\nu}^*\mid\mu,\nu\in\Sigma^*\}\cup\{0\},
	\]
	where $s_{\mu}\defeq s_{\mu_1}s_{\mu_2}\cdots s_{\mu_k}$ for $\mu\in\Sigma^*$ with $\lvert\mu\rvert=k$.
	Let $X\defeq \Sigma^*\cup\Sigma^{\N}$ be the set of all finite or infinite sequences on $\Sigma$.
	For $\mu\in\Sigma^*$ and a finite set $F\subset \Sigma^*$,
	define $C_{\mu,F}\subset X$ to be the set of all sequences which begin with $\mu$ and do not begin with the elements in $F$.
	If $F=\emptyset$,
	we simply write $C_{\mu}\defeq C_{\mu,\emptyset}$.
	Then a family of all $C_{\mu,F}$ forms an open basis of $X$ and $X$ is a compact Hausdorff space with respect to the topology generated by all $C_{\mu, F}$.
	Note that $C_{\mu,F}$ is compact open in $X$.
	In addition,
	$X$ is a Cantor space,
	that is,
	second countable,
	totally disconnected,
	compact Hausdorff space without isolated points.
	For $\mu,\nu\in\Sigma^*$,
	define $\alpha_{s_{\mu}s_{\nu}^*}\colon C_{\nu}\to C_{\mu}$ by
	\[
	\alpha_{s_{\mu}s_{\nu}^*}(\nu x)=\mu x
	\]
	for all $x\in X$.
	Then we obtain the action $\alpha\colon P_{\infty}\curvearrowright X$.
	Put $G_{\infty}\defeq P_{\infty}\ltimes_{\alpha}X$.
	Then $G_{\infty}$ is a locally compact Hausdorff second countable effective \'etale groupoid.
	In addition,
	$G_{\infty}$ is minimal and locally contracting since we will observe that $G_{\infty}$ has a minimal locally contracting subgroupoid which contains $G_{\infty}^{(0)}$ in Lemma \ref{lem: cocycle on Cuntz groupoid} and Lemma \ref{lemma: locally contractiveness of kersigma}.
	
	The inclusion $D_{\infty}\subset \mathcal{O}_{\infty}$ is isomorphic to $C(G_{\infty}^{(0)})\subset C^*_r(G_{\infty})$.
	Indeed,
	there exists a *-isomorphism $\varphi\colon C^*_r(G_{\infty})\to \mathcal{O}_{\infty}$ such that
	\[
	\varphi(\chi_{[s_{\mu}s_{\nu}^*, C_{\nu}]})=S_{\mu}S_{\nu}^*,
	\]
	where $\chi_{[s_{\mu}s_{\nu}^*, C_{\nu}]}\in C_c(G_{\infty})$ denotes the characteristic function on the compact open set $[s_{\mu}s_{\nu}^*, C_{\nu}]\subset G_{\infty}$.
	Note that $\varphi$ maps $C(G_{\infty}^{(0)})$ to $D_{\infty}$.
	In particular,
	by \cite[Theorem 5.6.18]{brown},
	$G_{\infty}$ is amenable since $\mathcal{O}_{\infty}$ is nuclear.
	
	\begin{lem}\label{lem: cocycle on Cuntz groupoid}
		Let $G_{\infty}$ be the above groupoid and $\Gamma$ be a countable discrete group.
		Then there exists a surjective continuous 1-cocycle $\sigma\colon G_{\infty}\to \Gamma$ such that $\ker\sigma$ is minimal.
	\end{lem}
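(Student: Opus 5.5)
Fix an enumeration $\Gamma=\{g_1,g_2,\dots\}$, allowing repetitions if $\Gamma$ is finite, and define a map $\theta\colon \Sigma\to\Gamma$ on the generators. Since $P_\infty$ is the universal inverse semigroup on $\{s_i\}$ with $s_i^*s_j=\delta_{i,j}1$, the assignment $\theta(s_i)\defeq\gamma_i$ should extend to a partial homomorphism $\theta\colon P_\infty^{\times}\to\Gamma$ by
\[
\theta(s_\mu s_\nu^*)\defeq \gamma_{\mu_1}\cdots\gamma_{\mu_k}(\gamma_{\nu_1}\cdots\gamma_{\nu_l})^{-1}.
\]
Well-definedness is clear, since each nonzero element has the unique normal form $s_\mu s_\nu^*$. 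Multiplicativity on nonzero products follows from the usual case analysis of $s_\mu s_\nu^* s_\alpha s_\beta^*$, according to whether $\alpha$ extends $\nu$ or $\nu$ extends $\alpha$. By the construction recalled in Subsection~\ref{subsection: inverse semigroup actions}, we obtain a continuous cocycle $\sigma\defeq\widetilde{\theta}\colon G_\infty\to\Gamma$.

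The choice of the $\gamma_i$ must make $\sigma$ surjective and $\ker\sigma$ minimal. I will choose the sequence $(\gamma_i)_{i\ge1}$ so that \emph{every element of $\Gamma$ occurs infinitely often}, in particular the identity $e$. This is possible because $\Gamma$ is countable: partition $\Sigma=\N\setminus\{0\}$ into countably many infinite sets indexed by $\Gamma$.

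Surjectivity is then immediate. For $g\in\Gamma$, pick $i$ with $\gamma_i=g$; then $[s_i,x]\in G_\infty$ for any $x\in X$, and $\sigma([s_i,x])=g$.

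Minimality of $\ker\sigma$ is the main step. Since $\ker\sigma$ is an open subgroupoid containing the unit space, I will show that every orbit is dense: for any $x\in X$ and any basic open set $C_{\mu,F}$, there is an arrow of $\ker\sigma$ from $x$ into $C_{\mu,F}$.

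First take $j\notin\{$letters appearing after $\mu$ in words of $F\}$, that is, choose $j$ so that $\mu j$ is not an extension of any word in $F$ other than possibly a prefix of $\mu$. Then $C_{\mu j}\subset C_{\mu,F}$ whenever $C_{\mu,F}\neq\emptyset$. Since infinitely many letters are available, such $j$ exists. Moreover, we may take $j$ with $\gamma_j$ equal to any prescribed element, because each value occurs infinitely often.

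Now consider the arrow $[s_{\mu j}, x]$, which maps $x$ to $\mu j x\in C_{\mu,F}$. Its label is $\sigma([s_{\mu j},x])=\gamma_{\mu_1}\cdots\gamma_{\mu_k}\gamma_j$. Choose $j$ with $\gamma_j=(\gamma_{\mu_1}\cdots\gamma_{\mu_k})^{-1}$, avoiding the finitely many forbidden letters. Then $\sigma([s_{\mu j},x])=e$, so this arrow lies in $\ker\sigma$, and the orbit of $x$ under $\ker\sigma$ meets $C_{\mu,F}$. This works for every $x\in X$, including finite words, since $\dom(\alpha_{s_{\mu j}^*s_{\mu j}})=\dom(\alpha_1)=X$.

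Hence every $\ker\sigma$-orbit is dense, and there is no proper nonempty closed invariant set. I expect the main care to be needed in verifying that $\theta$ is a genuine partial homomorphism, and in choosing $j$ correctly relative to $F$. The latter is handled by noting that only finitely many letters are excluded, while each value of $\gamma$ is attained infinitely often.
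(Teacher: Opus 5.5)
Your proof is correct and is essentially the paper's argument. The paper also labels the generators through a surjection $t\colon \N\setminus\{0\}\to\Gamma$, obtains $\sigma$ from the induced partial homomorphism, and picks $j$ avoiding the letters that follow $\mu$ in words of $F$; it then corrects the label with a second letter $i$ satisfying $t_i=t_{\mu j}^{-1}$ and uses the arrow $[s_{\mu j i},x]$. As a result the paper only needs the labelling to be surjective, whereas your one-letter arrow $[s_{\mu j},x]$ needs every value to occur infinitely often. (Your parenthetical rephrasing of the choice of $j$ is imprecise, since it must also exclude words of $F$ that extend $\mu j$; your first formulation, excluding the letter right after $\mu$, handles this correctly.)
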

	
	\begin{proof}
		Fix a (possibly non-injective) surjective map $t\colon \N\setminus\{0\}\to \Gamma$ and write $t_i\defeq t(i)$ for $i\in\N\setminus\{0\}$. 
		Consider the 1-cocycle $\sigma\colon G_{\infty}\to \Gamma$ induced by a partial homomorphism $P_{\infty}\setminus \{0\}\to \Gamma$ which maps $s_i$ to $t_i$ for $i\in\N$ (see the end of Subsection \ref{subsection: inverse semigroup actions} for 1-cocycles induced by partial homomorphisms).
		Namely,
		$\sigma$ satisfies
		\[
		\sigma([s_{\mu}s_{\nu}^*, x])=t_{\mu}t_{\nu}^*,
		\]
		where $\mu,\nu\in \Sigma^*$,
		$x\in C_{\nu}$ and $t_{\mu}\defeq t_{\mu_1}t_{\mu_2}\cdots t_{\mu_{\lvert\mu\rvert}}$.
		Then $\sigma$ is surjective since $\sigma([s_i, \varepsilon])=t_i$ for $i\in\N\setminus\{0\}$.
		To check that $\ker\sigma$ is minimal,
		consider a non-empty open set $U\subset X$ and $x\in X$.
		We may assume that $U=C_{\mu, \mu F}$ for some $\mu\in\Sigma^*$ and a finite set $F\subset \Sigma^*\setminus\{\varepsilon\}$. 
		One may choose $j\in\Sigma$ different from the first letter of every word in $F$ since $F$ is finite and $\Sigma$ is countably infinite.
		Then we have
		\[C_{\mu j}\cap \bigg(\bigcup_{\nu\in F}C_{\mu\nu}\bigg)=\emptyset.\]		
		Since $t$ is surjective,
		there exists $i\in\N\setminus\{0\}$ such that $t_i=t_{\mu j}^{-1}$.
		Put
		\[
		\alpha\defeq [s_{\mu j i}, x] \in G_{\infty}.
		\]
		Then one may check $\alpha\in \ker \sigma$, $r(\alpha)\in C_{\mu, \mu F}$ and $d(\alpha)=x$.
		Hence we have shown that $\ker \sigma$ is minimal.
		\qed
	\end{proof}
		
	\begin{lem}\label{lemma: locally contractiveness of kersigma}
		Keep the notations in Lemma \ref{lem: cocycle on Cuntz groupoid}.
		Then $\ker\sigma$ is locally contracting.
	\end{lem}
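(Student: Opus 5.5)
To show that $\ker\sigma$ is locally contracting, I will produce, for every nonempty open $U\subset X$, an open $V\subset U$ and an open bisection $S\subset\ker\sigma$ with $\overline{V}\subset d(S)$ and $r(S\overline{V})\subsetneq V$.

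First I reduce to a basic set. As in the proof of Lemma \ref{lem: cocycle on Cuntz groupoid}, $U$ contains some $C_{\mu,\mu F}$ with $F\subset\Sigma^*\setminus\{\varepsilon\}$ finite. Choose a letter $j\in\Sigma$ that is not the first letter of any word in $F$. Then $C_{\mu j}\subset C_{\mu,\mu F}\subset U$, and I take
\[
V\defeq C_{\mu j}.
\]
Since $V$ is compact open, it is clopen, so $\overline{V}=V$.

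Next I construct the contracting bisection inside $\ker\sigma$. I look for an element of $P_\infty$ of the form $s_{\mu j\lambda}s_{\mu j}^*$, where $\lambda\in\Sigma^*$ is nonempty and $t_{\mu j\lambda}=t_{\mu j}$, that is, $t_\lambda=1$. Such a $\lambda$ exists because $t$ is surjective: pick $a\in\Sigma$ arbitrary and $b$ with $t_b=t_a^{-1}$, and set $\lambda=ab$. Then put
\[
S\defeq[s_{\mu j\lambda}s_{\mu j}^*,\,C_{\mu j}].
\]
This is a compact open bisection with $d(S)=C_{\mu j}=\overline{V}$. Every element of $S$ has $\sigma$-value $t_{\mu j\lambda}t_{\mu j}^{-1}=1$, so $S\subset\ker\sigma$. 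Because $\ker\sigma$ is an open subgroupoid of $G_\infty$ (it is the preimage of a point under a continuous cocycle into a discrete group), $S$ is an open bisection of $\ker\sigma$.

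Finally I compute the range. We have
\[
r(S\overline{V})=\alpha_{s_{\mu j\lambda}s_{\mu j}^*}(C_{\mu j})=C_{\mu j\lambda}\subset C_{\mu j}=V.
\]
The inclusion is proper because $\lambda\neq\varepsilon$: the finite word $\mu j$ lies in $V$ but does not lie in $C_{\mu j\lambda}$.

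The only point needing care is ensuring that the chosen word has trivial cocycle value while still being nonempty, which is exactly the role of the two-letter word $\lambda=ab$ above. The rest is direct verification from the definitions of $\alpha$ and of the basic open sets $[s,U]$.
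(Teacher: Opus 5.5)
Your proof is correct and follows essentially the same route as the paper's. You reduce to a basic set $C_{\mu,\mu F}$ and then shift by a two-letter word with trivial $t$-value, which produces a compact open contracting bisection inside $\ker\sigma$. The only difference is cosmetic: you shrink $V$ to the cylinder $C_{\mu j}$, so that $r(S\overline{V})=C_{\mu j\lambda}$ holds exactly, whereas the paper keeps $V=C_{\mu,\mu F}$ and uses $S=[s_{\mu ji}s_{\mu}^*,C_{\mu,\mu F}]$ with $t_i=t_j^{-1}$.
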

	
	\begin{proof}
		
		Take a nonempty open set $U\subset X$.
		There exist $\mu\in\Sigma^*$ and a finite set $F\subset \Sigma^*\setminus\{\varepsilon\}$ such that $C_{\mu,\mu F}\subset U$.
		Take $j\in \Sigma$ so that $j$ is different from the first letter of every word in $F$.
		Then we obtain $C_{\mu j}\subsetneq C_{\mu,\mu F}$ from the choice of $j$ and $\mu\in C_{\mu,\mu F}\setminus C_{\mu j}$.
		Since $t\colon \N\setminus\{0\}\to \Gamma$ is surjective,
		there exists $i\in \N\setminus\{0\}$ such that $t_i=t_j^{-1}$.
		Then we have $C_{\mu ji}\subsetneq C_{\mu,\mu F}$.
		Put
		\[
		V=C_{\mu,\mu F} \quad\text{and}\quad
		S\defeq [s_{\mu ji }s_{\mu}^*, C_{\mu, \mu F}].
		\]
		Then $V$ is a nonempty clopen subset of $X$ and $S\subset \ker\sigma$ is a compact open bisection such that
		\[
		\overline{V}=V=d(S)
		\]
		and
		\[
		r(S\overline{V})=C_{\mu ji}\subsetneq C_{\mu,\mu F}=V.
		\]
		Hence $\ker\sigma$ is locally contracting.
		\qed
	\end{proof}
	
	\begin{thm}\label{thm: main theorem for Cuntz algebra}
		Let $H$ be a second countable compact abelian group.
		Then there exists an injective continuous homomorphism $\tau\colon H\hookrightarrow \Aut_{D_{\infty}}(\mathcal{O}_{\infty})$ such that 
		\begin{enumerate}
			\item the fixed point subalgebra $\mathcal{O}_{\infty}^{\tau}$ is a unital Kirchberg algebra satisfying the UCT,
			
			\item$D_{\infty}\subset \mathcal{O}_{\infty}^{\tau}$ and
			\item$\tau(H)=\Aut_{\mathcal{O}_{\infty}^{\tau}}(\mathcal{O}_{\infty})$.
		\end{enumerate}
		In particular,
		$H$ is isomorphic to $\Aut_{\mathcal{O}_{\infty}^{\tau}}(\mathcal{O}_{\infty})$ via $\tau$.
	\end{thm}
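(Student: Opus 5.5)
Let $\Gamma\defeq\widehat{H}$, a countable discrete abelian group because $H$ is second countable and compact. By Lemma \ref{lem: cocycle on Cuntz groupoid} there is a surjective continuous cocycle $\sigma\colon G_{\infty}\to\Gamma$ with $\ker\sigma$ minimal, and by Lemma \ref{lemma: locally contractiveness of kersigma} $\ker\sigma$ is locally contracting.

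\textbf{Definition of $\tau$.} For $h\in H$ put $c_h\defeq \langle h,\sigma(\cdot)\rangle\in Z(G_\infty)$; this is a $\T$-valued continuous cocycle. Then set $\tau_h\defeq\varphi\circ i(c_h)$ (transported to $\mathcal{O}_\infty$), so that $\tau_h(f)(\alpha)=\langle h,\sigma(\alpha)\rangle f(\alpha)$. The map $h\mapsto c_h$ has the following properties.
\begin{itemize}
\item It is a homomorphism.
\item It is injective: if $c_h=1$, then $\langle h,\gamma\rangle=1$ for all $\gamma\in\Gamma$ because $\sigma$ is surjective, so $h=e$ by Pontryagin duality.
\item It is continuous into the compact-open topology: on a compact $K\subset G_\infty$ the continuous map $\sigma$ takes only finitely many values, since $\Gamma$ is discrete.
\end{itemize}
Proposition \ref{prop: Z(G) is isom to Aut as top groups} then makes $\tau$ a continuous injective homomorphism into $\Aut_{D_\infty}(\mathcal{O}_\infty)$.

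\textbf{Fixed points.} I will show $\mathcal{O}_\infty^\tau=C^*_r(\ker\sigma)$. Since $\ker\sigma$ is clopen, $C^*_r(\ker\sigma)\subset C^*_r(G_\infty)$ canonically, and this algebra is clearly fixed. For the reverse inclusion, use the conditional expectation $P=\int_H\tau_h\,dh$ onto the fixed points. On $C_c(G_\infty)$, orthogonality of characters gives $P(f)=f\chi_{\ker\sigma}$. Hence $P$ maps onto $\overline{C_c(\ker\sigma)}$, and so $\mathcal{O}_\infty^\tau=P(\mathcal{O}_\infty)=C^*_r(\ker\sigma)$. In particular $D_\infty\subset\mathcal{O}_\infty^\tau$, which is (2).

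\textbf{Condition (1).} The groupoid $\ker\sigma$ is open, so it is \'etale and effective. It is also amenable, being an open subgroupoid of the amenable groupoid $G_\infty$, so $C^*_r(\ker\sigma)$ is nuclear and satisfies the UCT by Tu's theorem. Its unit space is the Cantor space $X$, so the algebra is unital and separable. Minimality together with effectiveness gives simplicity, via the ideal intersection property (Corollary \ref{cor: ideal intersection property}) and invariance of the ideal's unit-space part. Local contraction gives pure infiniteness by Theorem \ref{thm: locally contracting implies purely infinite}. Hence $\mathcal{O}_\infty^\tau$ is a unital UCT Kirchberg algebra.

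\textbf{Condition (3), the main obstacle.} Let $\psi\in\Aut_{C^*_r(\ker\sigma)}(\mathcal{O}_\infty)$. Since $\psi$ fixes $D_\infty$, Corollary \ref{cor: C*-analogue of Feldman moore} gives $\psi=\varphi_c$ for some $c\in Z(G_\infty)$. Because $\psi$ fixes $C^*_r(\ker\sigma)$ pointwise, testing on functions supported on compact open bisections inside $\ker\sigma$ shows $c|_{\ker\sigma}=1$. I then need to show $c$ factors through $\sigma$: if $\sigma(\alpha)=\sigma(\beta)$ with $d(\alpha)=d(\beta)$, then $\alpha\beta^{-1}\in\ker\sigma$, so $c(\alpha)=c(\beta)$. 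The difficulty is that such $\alpha$ and $\beta$ need not share a source in general. To handle this I will use transitivity of the orbit structure, via minimality or by building explicit arrows with words as in the proof of Lemma \ref{lem: cocycle on Cuntz groupoid}. Concretely, for $\gamma\in\Gamma$ choose $i$ with $t_i=\gamma$; every arrow $\alpha$ with $\sigma(\alpha)=\gamma$ can be written as $\alpha=\kappa\,[s_i,\cdot]\,\kappa'$ with $\kappa,\kappa'\in\ker\sigma$, suitably composed. This yields $c(\alpha)=c([s_i,y])$. Since $y\mapsto c([s_i,y])$ is continuous, and constant along $\ker\sigma$-orbits by a similar argument, minimality makes it a constant $\chi(\gamma)$. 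Then $\chi\colon\Gamma\to\T$ is a homomorphism because $c$ and $\sigma$ are, so $\chi=\langle h,\cdot\rangle$ for some $h\in H$, and $\psi=\tau_h$. Getting this factorisation argument right, in particular the constancy on orbits, is the step I expect to need the most care.
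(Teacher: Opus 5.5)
Your proposal is correct. It reaches the key points by a different route from the paper.

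\textbf{How the paper argues.} It takes $\tau=\widehat{\sigma}$, the action of $H$ induced by $\sigma$. It then quotes \cite[Proposition 3.3.2, Proposition 3.3.3, Corollary 3.3.5]{Komura2025Weyl} for three facts: $C^*_r(G_\infty)^{\widehat{\sigma}}=C^*_r(\ker\sigma)$, faithfulness of $\widehat{\sigma}$, and condition (3). Only the check that $C^*_r(\ker\sigma)$ is a unital UCT Kirchberg algebra is done as in your proposal.

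\textbf{How you argue.} You derive all of these facts inside the paper:
\begin{enumerate}
\item the fixed points via the averaging expectation $P$ and orthogonality of characters;
\item injectivity via surjectivity of $\sigma$ and Pontryagin duality;
\item continuity via Proposition \ref{prop: Z(G) is isom to Aut as top groups};
\item condition (3) by first writing any $\psi$ as $\varphi_c$ (Corollary \ref{cor: C*-analogue of Feldman moore}), and then showing that a cocycle $c$ trivial on $\ker\sigma$ factors through $\sigma$.
\end{enumerate}

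\textbf{The step you were worried about goes through cleanly.}
\begin{enumerate}
\item If $\sigma(\alpha)=t_i$, then $\alpha[s_i,d(\alpha)]^{-1}\in\ker\sigma$, so $c(\alpha)=c([s_i,d(\alpha)])$.
\item If $\kappa\in\ker\sigma$ runs from $y$ to $y'$, then $[s_i,y']\kappa[s_i,y]^{-1}\in\ker\sigma$. Hence the continuous function $y\mapsto c([s_i,y])$ is constant on $\ker\sigma$-orbits.
\item By minimality of $\ker\sigma$ every orbit is dense, so this function is constant. Thus $c(\alpha)$ depends only on $\sigma(\alpha)$.
\item Each $[s_i,X]$ is a bisection with full domain, so every pair of elements of $\Gamma$ has composable representatives. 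The resulting $\chi$ is therefore a character of $\Gamma$, and equals $\langle h,\cdot\rangle$ for some $h\in H$.
\end{enumerate}
Incidentally, your $P$ is exactly the conditional expectation used in the paper's footnote to get nuclearity of $C^*_r(\ker\sigma)$ without appealing to amenability of open subgroupoids.

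\textbf{Trade-off.} The paper's route is shorter and rests on a general correspondence for cocycle-induced compact group actions on groupoid C*-algebras, not tied to this example. Yours is self-contained modulo Section 2 and standard facts. However, your factorization argument uses a special feature of $G_\infty$: the global bisections $[s_i,X]$, on which $\sigma$ is constant. For a general \'etale groupoid, that step would need a more careful local argument.
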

	
	\begin{proof}
		We may identify the inclusion $D_{\infty}\subset \mathcal{O}_{\infty}$ with $C(G_{\infty}^{(0)})\subset C^*_r(G_{\infty})$,
		where $G_{\infty}$ is the standard groupoid model of $\mathcal{O}_{\infty}$ stated above Lemma \ref{lem: cocycle on Cuntz groupoid}.
		Let $\Gamma\defeq \widehat{H}$ denote the Pontryagin dual group of $H$.
		Since $H$ is second countable,
		$\Gamma$ is countable.
		By Lemma \ref{lem: cocycle on Cuntz groupoid} and \ref{lemma: locally contractiveness of kersigma},
		there exists a surjective 1-cocycle $\sigma \colon G_{\infty}\to \Gamma$ such that $\ker\sigma$ is minimal and locally contracting.
		Since $G^{(0)}_{\infty}\subset \ker\sigma \subset G^{(0)}_{\infty}$ and $G^{(0)}_{\infty}$ is effective,
		$\ker\sigma$ is also effective.
		Put $B=C^*_r(\ker\sigma)$.
		Since $G_{\infty}$ is amenable,
		its open subgroupoid $\ker \sigma$ is also amenable by \cite[Proposition 9.77]{Williamstoolkit}.
		Hence $B$ is nuclear\footnote{In this situation,
		one can check that $B$ is nuclear more directly. Indeed,
		since $\ker\sigma$ is clopen in $G$,
		there exists a conditional expectation from $\mathcal{O}_{\infty}$ to $B$ by \cite[Lemma 3.4]{BrownExelFuller2021}.
		Hence $B$ is nuclear by \cite[Proposition 10.1.2]{brown}.} and satisfies the UCT by \cite[Theorem 5.6.18]{brown} and \cite[Theorem 3.1]{BarlakLiCartanandUCTI}. 
		By \cite[Definition 3.3.1, Proposition 3.3.2]{Komura2025Weyl},
		$\sigma \colon G_{\infty}\to \Gamma$ induces the action $\widehat{\sigma}\colon H\curvearrowright C^*_r(G_{\infty})$ and $B=C^*_r(G_{\infty})^{\widehat{\sigma}}$ holds.
		Since $\sigma$ is surjective,
		$\widehat{\sigma}$ is faithful by \cite[Proposition 3.3.3]{Komura2025Weyl} and hence $\widehat{\sigma}\colon H\to \Aut_{C(G_{\infty}^{(0)})}(C^*_r(G_{\infty}))$ is injective.
		Since $\ker\sigma$ is effective, minimal and locally contracting,
		$B$ is a simple purely infinite C*-algebra by \cite[Theorem 5.1]{Brown2014} and Theorem \ref{thm: locally contracting implies purely infinite}.
		Hence $B$ is a unital Kirchberg algebra with $C(G_{\infty}^{(0)})\subset B\subset C^*_r(G_{\infty})$.
		Finally,
		by \cite[Corollary 3.3.5]{Komura2025Weyl},
		we have $\widehat{\sigma}(H)=\Aut_{C^*_r(G_{\infty})^{\widehat{\sigma}}}(C^*_r(G_{\infty}))$.
		Putting $\tau\defeq \widehat{\sigma}$,
		we have completed the proof.
		\qed
	\end{proof}

	\subsubsection{The general Kirchberg case}
	
	We first recall several results concerning tensor absorption,
	which will be used in the proof of Theorem \ref{Theorem: existence of universal Cartan subalgebra of Kirchberg algebra}.

	\begin{thm}[{\cite[Theorem 4]{Archbold1975},\cite[Theorem 4]{Batty1976}}] \label{thm: commutant of tensor product}
		Let $A, B$ be unital C*-algebras.
		Then the relative commutant of $A\otimes 1_B$ in $A\otimes B$ coincides with $Z(A)\otimes B$,
		where $Z(A)$ denotes the center of $A$ and the tensor product is the minimal tensor product.
	\end{thm}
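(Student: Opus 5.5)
We need to prove that the relative commutant $(A\otimes 1_B)'\cap (A\otimes B)$ equals $Z(A)\otimes B$. The inclusion $Z(A)\otimes B\subset (A\otimes 1_B)'\cap(A\otimes B)$ is immediate: elementary tensors $z\otimes b$ with $z\in Z(A)$ commute with every $a\otimes 1$, and the commutant condition is closed and linear. The content is the reverse inclusion. I would reduce it to a statement about $A$ alone by slicing with states of $B$, and then rebuild the element from its slices.

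\textbf{Step 1: slice maps.} For each bounded functional $\psi\in B^*$, consider the right slice map $R_\psi=\id_A\otimes\psi\colon A\otimes B\to A$. It is bounded on the minimal tensor product and satisfies $R_\psi(x(a\otimes 1))=R_\psi(x)a$ and $R_\psi((a\otimes 1)x)=aR_\psi(x)$. So if $x$ commutes with $A\otimes 1_B$, then every slice $R_\psi(x)$ lies in $Z(A)$.

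\textbf{Step 2: reconstruction.} It remains to show that if $x\in A\otimes B$ and $R_\psi(x)\in Z(A)$ for all $\psi\in B^*$, then $x\in Z(A)\otimes B$. This is the slice map problem for the pair $(Z(A)\subset A, B)$, that is, the identity
\[
\{x\in A\otimes B\mid R_\psi(x)\in Z(A)\ \text{for all }\psi\}=Z(A)\otimes B.
\]
This is the main obstacle, since slice map properties fail in general. I would use the special structure of $Z(A)$ as the range of a ``center-valued averaging''. Approximate $x$ by finite sums $\sum_k a_k\otimes b_k$, and try to construct a net of completely positive contractions $\Theta_\lambda\colon A\to A$ with $\Theta_\lambda(A)\subset Z(A)$ (or asymptotically so) and $\Theta_\lambda(z)\to z$ for $z\in Z(A)$. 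Then $(\Theta_\lambda\otimes\id_B)(x)$ lies in (or converges to) $Z(A)\otimes B$ and tends to $x$ in norm. Natural candidates are Dixmier-type averages $a\mapsto \sum_i t_i u_iau_i^*$ over unitaries of $A$. These fix $x$ exactly, because $x$ commutes with $u\otimes 1$. Dixmier's approximation property, that convex hulls of unitary conjugates of $a$ meet $Z(A)$ in the von Neumann setting, is not available for general C*-algebras, though.

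\textbf{Step 2, concrete approach.} So I would argue via the Dauns--Hofmann and primitive ideal picture. For each $P\in\mathrm{Prim}(A)$, the quotient $A/P\otimes B$ has the image of $x$ commuting with $A/P\otimes 1$. Since $A/P$ is prime, represent it irreducibly on $\mathcal{H}$ and $B$ faithfully on $\mathcal{K}$, so that $A\otimes B\subset B(\mathcal{H}\otimes\mathcal{K})$. The commutant of $\pi(A)''\otimes 1=B(\mathcal{H})\otimes 1$ is $1\otimes B(\mathcal{K})$, so the image of $x$ has the form $1\otimes b_P$ with $b_P\in B$ (using slices, $b_P=R$-slices). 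Then I would glue the fibers: show that $P\mapsto b_P$ is norm continuous on the complete regularization of $\mathrm{Prim}(A)$ and vanishes at infinity, and identify $Z(A)\otimes B\cong C_0(\mathrm{Max}\text{-spectrum},B)$. The continuity and gluing, which is where Archbold and Batty do their work, is the step I expect to be hardest. I would handle it by using Step 1 to get that each $R_\psi(x)$ is central, hence a continuous function of $P$. Together with a uniform boundedness argument, this gives weak continuity, and norm continuity would follow from compactness of the unit ball of $B^*$ together with the approximation of $x$ by finite tensors.
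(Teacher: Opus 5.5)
The paper gives no proof of this statement. It is quoted from Archbold and Batty and used only as a black box in Proposition~\ref{prop: isom on tensor and tensor factor}, so there is no in-paper route to compare with.

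Judged on its own, your outline is sound and can be completed. However, the step you call hardest is left open, and it is easier than you expect.

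First, Step~1 is actually an equivalence. If $R_\psi(y)=0$ for all $\psi\in B^*$, then $(\phi\otimes\psi)(y)=\phi(R_\psi(y))=0$ for all $\phi,\psi$, so $y=0$. Hence $x$ commutes with $A\otimes 1_B$ if and only if every $R_\psi(x)$ is central. The theorem is therefore exactly the Fubini-product identity you isolate. The general failure of slice map properties does not bite here, because $Z(A)\cong C(K)$ is commutative and unital, with $K$ its maximal ideal space.

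You do not need $\mathrm{Prim}(A)$, irreducible representations or Dauns--Hofmann. For $p\in K$, extend the character $\chi_p$ of $Z(A)$ to a state $\phi$ of $A$ and put $b_p=(\phi\otimes\id)(x)\in B$. Then $\psi(b_p)=(\phi\otimes\psi)(x)=\phi(R_\psi(x))=\chi_p(R_\psi(x))$ for all $\psi$. So $b_p$ is independent of the extension, and $p\mapsto\psi(b_p)$ is the Gelfand transform of $R_\psi(x)$. In your fiber picture this is the identity $q_P(R_\psi(x))=\psi(b_P)1_{A/P}$, which you should state explicitly. It gives weak continuity and constancy on Glimm classes at once, with no uniform boundedness argument.

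Your hint for norm continuity works once made precise:
\begin{itemize}
\item On the weak$^*$-compact unit ball $B^*_1$, the map $\psi\mapsto R_\psi(x)\in C(K)$ is weak$^*$-to-norm continuous. It is so for finite tensors $y$, and $\sup_{\psi\in B^*_1}\|R_\psi(x)-R_\psi(y)\|\le\|x-y\|$.
\item Its image is therefore norm compact, hence equicontinuous by Arzel\`a--Ascoli.
\item Since $\|b_p-b_q\|=\sup_{\psi\in B^*_1}|\psi(b_p)-\psi(b_q)|$, equicontinuity is exactly norm continuity of $p\mapsto b_p$.
\end{itemize}

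The gluing step you leave implicit also closes easily. The element $\tilde x\in C(K,B)\cong Z(A)\otimes B\subset A\otimes B$ with $\tilde x(p)=b_p$ satisfies $R_\psi(\tilde x)=R_\psi(x)$ for all $\psi$. So $\tilde x=x$ by the separation property above.

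Minor slips:
\begin{itemize}
\item The irreducible representation of $A/P$ exists because $P$ is primitive, not because $A/P$ is prime.
\item $b_P$ is obtained by a left slice $\omega\otimes\id$, not an $R$-slice.
\item ``Vanishes at infinity'' is vacuous, since $A$ is unital and $K$ is compact.
\end{itemize}
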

	
	\begin{prop}\label{prop: isom on tensor and tensor factor}
		Let $A$ and $B$ be unital C*-algebras.
		Assume that $A$ is nonzero and has the trivial center $Z(A)=\C 1_A$.
		Then the map
		\[
		\Phi\colon\Aut(B)\ni \psi\mapsto \id_A\otimes \psi \in\Aut_{A\otimes 1_B}(A\otimes B)
		\]
		is a group isomorphism.
	\end{prop}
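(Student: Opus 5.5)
The proof splits into three parts: $\Phi$ is well defined, it is an injective homomorphism, and it is surjective.

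\emph{Well defined.} For $\psi\in\Aut(B)$, the map $\id_A\otimes\psi$ extends to an automorphism of the minimal tensor product $A\otimes B$, with inverse $\id_A\otimes\psi^{-1}$. It clearly fixes $a\otimes 1_B$ for every $a\in A$, since $\psi(1_B)=1_B$.

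\emph{Homomorphism and injectivity.} We have $(\id_A\otimes\psi_1)(\id_A\otimes\psi_2)=\id_A\otimes\psi_1\psi_2$, checked on elementary tensors. Suppose $\id_A\otimes\psi=\id$. Since $A\neq 0$, we can evaluate on $1_A\otimes b$ to get $1_A\otimes\psi(b)=1_A\otimes b$. The map $b\mapsto 1_A\otimes b$ is injective, so $\psi=\id_B$.

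\emph{Surjectivity.} Take $\theta\in\Aut_{A\otimes 1_B}(A\otimes B)$. For $b\in B$, the element $1_A\otimes b$ commutes with $A\otimes 1_B$. Because $\theta$ fixes $A\otimes 1_B$ pointwise, $\theta(1_A\otimes b)$ also commutes with $A\otimes 1_B$: for $a\in A$,
\[
\theta(1_A\otimes b)(a\otimes 1_B)=\theta\bigl((1_A\otimes b)(a\otimes 1_B)\bigr)=\theta\bigl((a\otimes 1_B)(1_A\otimes b)\bigr)=(a\otimes 1_B)\theta(1_A\otimes b).
\]
By Theorem \ref{thm: commutant of tensor product}, this relative commutant is $Z(A)\otimes B=\C 1_A\otimes B=1_A\otimes B$. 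So there is a unique $\psi(b)\in B$ with $\theta(1_A\otimes b)=1_A\otimes\psi(b)$. The map $\psi$ is a unital $*$-endomorphism of $B$, since $b\mapsto 1_A\otimes b$ is an injective $*$-homomorphism. Applying the same argument to $\theta^{-1}$, which also lies in $\Aut_{A\otimes 1_B}(A\otimes B)$, gives an inverse endomorphism, so $\psi\in\Aut(B)$.

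Now $\theta$ and $\id_A\otimes\psi$ agree on every elementary tensor:
\[
\theta(a\otimes b)=\theta\bigl((a\otimes 1_B)(1_A\otimes b)\bigr)=(a\otimes 1_B)(1_A\otimes\psi(b))=a\otimes\psi(b).
\]
By linearity and continuity they agree on all of $A\otimes B$, hence $\theta=\Phi(\psi)$.

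The only nontrivial input is the relative commutant theorem. The main point to watch is the identification $Z(A)\otimes B\cong 1_A\otimes B$ when $Z(A)=\C 1_A$, which holds because $\C\otimes B\cong B$ canonically.
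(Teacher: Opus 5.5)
Your proposal is correct and follows the paper's proof. Injectivity comes from evaluating on $1_A\otimes b$. For surjectivity, the relative commutant theorem (Theorem \ref{thm: commutant of tensor product}) gives $\theta(1_A\otimes b)=1_A\otimes\psi(b)$, and applying the same argument to $\theta^{-1}$ shows $\psi\in\Aut(B)$. You also write out two points the paper leaves implicit: that $\id_A\otimes\psi$ is well defined on the minimal tensor product, and that $\theta$ and $\id_A\otimes\psi$ agree on elementary tensors.
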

	
	\begin{proof}
		It is straightforward to check that $\Phi$ is a group homomorphism.
		Since we assume that $A$ is nonzero,
		one can see that $\Phi$ is injective.
		We show that $\Phi$ is surjective and take $\varphi\in\Aut_{A\otimes 1_B} (A\otimes B)$.
		For $b\in B$,
		we claim $\varphi(1\otimes b)\in 1_A\otimes B$.
		Indeed,
		for $a\in A$,
		we have $a\otimes 1=\varphi(a\otimes 1)$ and
		\begin{align*}
			\varphi(1\otimes b)(a\otimes 1)=\varphi(1\otimes b)\varphi(a\otimes 1)=\varphi(a\otimes b)=(a\otimes 1)\varphi(1\otimes b).
		\end{align*}
		Hence $\varphi(1\otimes b)$ belongs to the relative commutant of $A\otimes 1_B$ in $A\otimes B$.
		By Theorem \ref{thm: commutant of tensor product},
		$\varphi(1\otimes b)\in 1_A\otimes B$ and there uniquely exists $\psi(b)\in B$ such that $\varphi(1\otimes b)=1\otimes \psi(b)$.
		Since $\varphi^{-1}$ also preserves $A\otimes 1_B$,
		it turns out that $\psi\in \Aut(B)$.
		Since $\varphi=\id_A\otimes \psi$,
		we have shown that the map $\Phi$ is surjective and hence an isomorphism.
		\qed
	\end{proof}

	\begin{prop}\label{prop: isom between groups of subalgebra preserving automorphisms}
		Let $A$ be a unital nonzero C*-algebra with the trivial center $Z(A)=\C 1_A$.
		Assume that $C\subset B$ is an inclusion of C*-algebras with $1_B\in C$.
		Let
		\[
		\Phi\colon \Aut(B)\to \Aut_{A\otimes 1_B}(A\otimes B)
		\]
		denote the isomorphism in Proposition \ref{prop: isom on tensor and tensor factor}.
		Then $\Phi(\Aut_C(B))=\Aut_{A\otimes C}(A\otimes B)$.
		In particular,
		$\Aut_C (B)$ is isomorphic to $\Aut_{A\otimes C}(A\otimes B)$ via $\Phi$.
	\end{prop}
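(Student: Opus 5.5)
We prove the two inclusions $\Phi(\Aut_C(B))\subset \Aut_{A\otimes C}(A\otimes B)$ and $\Aut_{A\otimes C}(A\otimes B)\subset\Phi(\Aut_C(B))$ directly from the explicit form $\Phi(\psi)=\id_A\otimes\psi$ given in Proposition \ref{prop: isom on tensor and tensor factor}. Here $A\otimes C$ is regarded as a C*-subalgebra of $A\otimes B$; this is legitimate because the minimal tensor product is injective. The isomorphism statement then follows, since $\Phi$ is already known to be a group isomorphism $\Aut(B)\to\Aut_{A\otimes 1_B}(A\otimes B)$, and its restriction to the subgroup $\Aut_C(B)$ is then an isomorphism onto the image.

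For the first inclusion, take $\psi\in\Aut_C(B)$. For $a\in A$ and $c\in C$ we have $(\id_A\otimes\psi)(a\otimes c)=a\otimes c$. Elementary tensors span a dense subspace of $A\otimes C$, and $\id_A\otimes\psi$ is continuous, so $\id_A\otimes\psi$ fixes $A\otimes C$ pointwise.

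For the reverse inclusion, take $\varphi\in\Aut_{A\otimes C}(A\otimes B)$. Since $1_B\in C$, we have $A\otimes 1_B\subset A\otimes C$, so $\varphi\in\Aut_{A\otimes 1_B}(A\otimes B)$. By Proposition \ref{prop: isom on tensor and tensor factor} there is $\psi\in\Aut(B)$ with $\varphi=\id_A\otimes\psi$. For $c\in C$ we get
\[
1_A\otimes c=\varphi(1_A\otimes c)=1_A\otimes\psi(c).
\]
The map $B\ni b\mapsto 1_A\otimes b$ is injective because $A\neq 0$, so $\psi(c)=c$, i.e.\ $\psi\in\Aut_C(B)$.

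No step is a real obstacle. The only points to watch are that $A\otimes C\subset A\otimes B$ makes sense for minimal tensor products, and that the hypothesis $1_B\in C$ is exactly what allows Proposition \ref{prop: isom on tensor and tensor factor} to be invoked.
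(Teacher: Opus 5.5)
Your proof is correct and follows the same route as the paper, which simply asserts that $\psi\in\Aut_C(B)$ if and only if $\id_A\otimes\psi\in\Aut_{A\otimes C}(A\otimes B)$. You prove both directions of that equivalence and state explicitly what the paper leaves implicit: the hypothesis $1_B\in C$ puts $\Aut_{A\otimes C}(A\otimes B)$ inside the range $\Aut_{A\otimes 1_B}(A\otimes B)$ of $\Phi$.
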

	
	\begin{proof}
		For $\psi\in \Aut(B)$,
		one can see that $\psi\in\Aut_C(B)$ if and only if $\id_A\otimes \psi\in\Aut_{A\otimes C}(A\otimes B)$.
		The proposition follows from this fact.
		\qed
	\end{proof}
	
	Now,
	we are ready to show the main realization theorem of this subsection.
	
	\begin{thm}\label{Theorem: existence of universal Cartan subalgebra of Kirchberg algebra}
		Let $A$ be a unital Kirchberg algebra satisfying the UCT.
		Then there exists a Cartan subalgebra $D\subset A$ whose Gelfand spectrum is a Cantor space with the following property:
		for any second countable compact abelian group $H$,
		there exists a continuous injective homomorphism $\tau\colon H\hookrightarrow  \Aut_D(A)$ such that the following conditions hold:
		
		\begin{enumerate}
			\item the fixed point algebra $A^{\tau}$ is a unital Kirchberg algebra satisfying the UCT,
			\item $D\subset A^{\tau}$,
			\item $\tau(H)=\Aut_{A^\tau}(A)$.
		\end{enumerate} 
		In particular,
		$H$ is isomorphic to $\Aut_{A^\tau}(A)$ via $\tau$.
	\end{thm}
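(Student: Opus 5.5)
The plan is to reduce to the case of $\mathcal{O}_{\infty}$ (Theorem \ref{thm: main theorem for Cuntz algebra}) by $\mathcal{O}_{\infty}$-absorption. Since $A$ is a unital Kirchberg algebra, Kirchberg's absorption theorem gives $A\simeq A\otimes\mathcal{O}_{\infty}$. Since $A$ satisfies the UCT, a result of Li and Renault (or Barlak--Li) provides a Cartan subalgebra $D_A\subset A$ whose spectrum is a Cantor space. Alternatively, Spielberg's graph or Katsura-type models give an ample effective groupoid model of $A$ with Cantor unit space. We then put
\[
D\defeq D_A\otimes D_{\infty}\subset A\otimes\mathcal{O}_{\infty}\simeq A.
\]
This is a Cartan subalgebra: if $D_A\subset A$ is modelled by $(G_A,\Sigma_A)$, then $D$ is modelled by the product twisted groupoid over $G_A\times G_{\infty}$. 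That groupoid is effective, since a product of effective étale groupoids is effective, and its unit space is a product of two Cantor spaces, hence again a Cantor space.

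Given $H$, take $\tau_0\colon H\hookrightarrow\Aut_{D_{\infty}}(\mathcal{O}_{\infty})$ from Theorem \ref{thm: main theorem for Cuntz algebra} and set $\tau(h)\defeq\id_A\otimes\tau_0(h)$, transported to $A$. Then $\tau$ is an injective continuous homomorphism, and it fixes $D_A\otimes D_{\infty}$ pointwise, so $\tau$ lands in $\Aut_D(A)$ and (2) holds. For (1), identify $A^{\tau}=A\otimes\mathcal{O}_{\infty}^{\tau_0}$. One inclusion is clear. For the other, apply the conditional expectation $\id_A\otimes E_0$, where $E_0=\int_H\tau_0(h)\,dh$ is averaged over Haar measure; its range is exactly $A\otimes\mathcal{O}_{\infty}^{\tau_0}$. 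The algebra $A\otimes\mathcal{O}_{\infty}^{\tau_0}$ is a tensor product of two unital Kirchberg algebras satisfying the UCT, so it is simple, purely infinite, separable, nuclear, and satisfies the UCT, the last by the Künneth theorem or the bootstrap class being closed under tensor products.

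For (3), note that $1_A\otimes\mathcal{O}_{\infty}^{\tau_0}\ni 1$ and $A\otimes 1\subset A\otimes\mathcal{O}_{\infty}^{\tau_0}=A^{\tau}$. Any $\varphi\in\Aut_{A^{\tau}}(A\otimes\mathcal{O}_{\infty})$ therefore fixes $A\otimes 1$. Since $A$ is simple and unital, its center is $\C 1_A$, so Proposition \ref{prop: isom on tensor and tensor factor} applies and gives $\varphi=\id_A\otimes\psi$. Proposition \ref{prop: isom between groups of subalgebra preserving automorphisms}, with $C=\mathcal{O}_{\infty}^{\tau_0}\subset B=\mathcal{O}_{\infty}$, then yields $\psi\in\Aut_{\mathcal{O}_{\infty}^{\tau_0}}(\mathcal{O}_{\infty})=\tau_0(H)$. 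Hence $\Aut_{A^{\tau}}(A)=\tau(H)$.

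I expect the main obstacle to be the choice of $D$, which must be independent of $H$ and have Cantor spectrum. This needs a cited existence result for Cantor Cartan subalgebras of UCT Kirchberg algebras, together with the verification that $D_A\otimes D_{\infty}$ is Cartan, which goes through the product groupoid model. The identification $A^{\tau}=A\otimes\mathcal{O}_{\infty}^{\tau_0}$ via the slice expectation is routine by comparison.
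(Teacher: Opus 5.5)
Your proposal is correct and follows essentially the same route as the paper: $\mathcal{O}_{\infty}$-absorption, then a Cantor-spectrum Cartan subalgebra of $A$ tensored with $D_{\infty}$, then $\tau=\id_A\otimes\tau_0$, and finally the tensor-factor automorphism propositions (using the trivial center of $A$) to get $\Aut_{A^{\tau}}(A)=\tau(H)$. The only differences are that you check Cartan-ness of $D_A\otimes D_{\infty}$ directly through the product groupoid, where the paper cites Barlak--Li, and you justify $A^{\tau}=A\otimes\mathcal{O}_{\infty}^{\tau_0}$ with the averaging expectation $\id_A\otimes E_0$, which the paper states without proof.
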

	
	\begin{proof}
		Fix an isomorphism $\varphi\colon A\otimes \mathcal{O}_{\infty}\to A$ by using Kirchberg's absorption theorem (\cite[Theorem 3.14]{Kirchberg_Phillips_Embedding}, \cite[Theorem 7.2.6 (ii)]{Rordamclassification}).
		By \cite[Lemma 5.6]{XinLiRenaultCartanexistence},
		which relies on \cite{SpielberggraphBasedmodel},
		$A$ has a Cartan subalgebra $\widetilde{D}\subset A$ whose Gelfand spectrum is a Cantor space.
		Put $D\defeq \varphi(\widetilde{D}\otimes D_{\infty})\subset A$.
		Note that the Gelfand spectrum of $D$ is a Cantor space since the same holds for both $\widetilde{D}$ and $D_{\infty}$.
		Since the tensor product of Cartan subalgebras is also a Cartan subalgebra by \cite[Lemma 5.1]{BarlakLiCartanandUCTI}, 
		$D\subset A$ is a Cartan subalgebra.
		Now,
		to show that $D\subset A$ satisfies the properties in the statement,
		it suffices to show that $\widetilde{D}\otimes D_{\infty}\subset A\otimes \mathcal{O}_{\infty}$ satisfies the same properties.

		Let $H$ be a second countable compact abelian group.
		By Theorem \ref{thm: main theorem for Cuntz algebra},
		there exists a continuous injective homomorphism $\widetilde{\tau}\colon H\hookrightarrow \Aut_{D_{\infty}}(\mathcal{O}_{\infty})$ such that $\mathcal{O}_{\infty}^{\widetilde{\tau}}$ is a Kirchberg algebra and $\widetilde{\tau}(H)=\Aut_{\mathcal{O}_{\infty}^{\widetilde{\tau}}}(\mathcal{O}_{\infty})$.
		For $h\in H$,
		put $\tau'_h=\id_{A}\otimes \widetilde{\tau}_h\in \Aut(A\otimes \mathcal{O}_{\infty})$.
		Then this induces the continuous injective homomorphism 
		\[\tau'\colon H\hookrightarrow \Aut_{A\otimes D_{\infty}}(A\otimes \mathcal{O}_{\infty})(\subset \Aut_{\widetilde{D}\otimes D_{\infty}}(A\otimes \mathcal{O}_{\infty})).\]
		Since $A$ is simple and hence has the trivial center,
		we have the isomorphism
		\[
		\Phi\colon \Aut_{\mathcal{O}_{\infty}^{\widetilde{\tau}}}(\mathcal{O}_{\infty})\to\Aut_{A\otimes \mathcal{O}_{\infty}^{\widetilde{\tau}}}(A\otimes \mathcal{O}_{\infty}).
		\]
		in Proposition \ref{prop: isom between groups of subalgebra preserving automorphisms}.
		Since we have $\tau'=\Phi\circ\widetilde{\tau}$,
		we obtain
		\[
		\tau'(H)=\Aut_{A\otimes \mathcal{O}_{\infty}^{\widetilde{\tau}}}(A\otimes \mathcal{O}_{\infty}).
		\]
		In addition,
		we have
		\[A\otimes \mathcal{O}_{\infty}^{\tilde{\tau}}=(A\otimes \mathcal{O}_{\infty})^{\tau'}.\]
		Since $A$ and $\mathcal{O}_{\infty}$ are Kirchberg algebras satisfying the UCT,
		$A\otimes \mathcal{O}_{\infty}^{\widetilde{\tau}}$ is also a Kirchberg algebra by \cite[Proposition 4.1.8, 8.4.12]{Rordamclassification}.
		Hence,
		we have proved that the Cartan subalgebra $\widetilde{D}\otimes D_{\infty}\subset A\otimes \mathcal{O}_{\infty}$ satisfies the properties in the statement.
		Now,
		via the isomorphism $\varphi\colon A\otimes \mathcal{O}_{\infty}\to A$,
		we have completed the proof.
		\qed
		
		\end{proof}

	\subsection{Non-existence of compact abelian groups}
	
	In the previous subsection,
	we established a realization result for compact abelian groups as groups of Cartan-fixing automorphisms.
	In this subsection,
	we show that there are obstructions to such realization.
	First,
	we introduce the notion of expansive groupoids.
	Recall that $\Bis^c(G)$ denotes the inverse semigroup of compact open bisections of an \'etale groupoid $G$.

	\begin{defi}[{\cite[Definition 5.3]{NEKRASHEVYCH_2019}, \cite[Definition 2.1.10]{Komura2025Weyl}}]
		Let $G$ be a locally compact Hausdorff \'etale groupoid.
		Then $G$ is said to be expansive if there exists a finite set $F\subset \Bis^c(G)$ such that the inverse semigroup generated by $F$ in $\Bis^c(G)$ forms an open basis of $G$.
	\end{defi}
	
	\begin{rem}
		While our definition of expansiveness is phrased differently,
		it is equivalent to that in \cite[Definition 5.3]{NEKRASHEVYCH_2019}.
		See \cite[Remark 2.1.11]{Komura2025Weyl} for further discussion.
		Note that an expansive groupoid is ample.
	\end{rem}
	
	Expansive groupoids often arise from finitely generated inverse semigroup actions.
	
	\begin{ex}[{\cite[Example 2.1.12]{Komura2025Weyl}}]\label{ex: finitely generated inverse semigroup actions yields expansive groupoids}
		Let $S$ be a finitely generated inverse semigroup,
		$X$ be a locally compact Hausdorff space and $\sigma\colon S\curvearrowright X$ be an action such that $S\ltimes_{\sigma}X$ is Hausdorff.
		Assume that $\dom(\sigma_e)\subset X$ is a compact open set for each $e\in E(S)$ and $\{\dom(\sigma_e)\}_{e\in E(S)}$ is a basis of $X$.
		Then $S\ltimes_{\sigma}X$ is expansive.
		In particular,
		the standard groupoid models of the Cuntz algebras $\mathcal{O}_n$ are expansive for $n\in\N$ with $n\geq 2$ (see \cite[Example 2.1.12, 4.1.1]{Komura2025Weyl} for these facts).
		In contrast,
		the standard groupoid model of $\mathcal{O}_{\infty}$ described in Subsubsection \ref{subsubsection:Cuntz algebra case} is not expansive.
		We will observe this in Example \ref{example: groupoid model of o_infinity is not expansive}.
		
	\end{ex}
	
	The following simple observation is the key to the proof of Theorem \ref{theorem every compact subgroup of Cartan fixing are fin gen}.
	
	\begin{prop}\label{prop: image of expansive groupoid is finitely generated}
		Let $G$ be a locally compact Hausdorff \'etale groupoid,
		$\Gamma$ be a discrete group and $c\colon G\to\Gamma$ be a continuous 1-cocycle.
		If $G$ is expansive,
		then the group generated by $c(G)$ in $\Gamma$ is finitely generated.
		In particular,
		if $c$ is surjective,
		then $\Gamma$ is finitely generated.
	\end{prop}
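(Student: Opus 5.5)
The plan is to use the finite set of compact open bisections from the definition of expansiveness to produce finitely many values of $c$ that generate everything. Fix a finite set $F\subset \Bis^c(G)$ such that the inverse semigroup $S$ generated by $F$ in $\Bis^c(G)$ is an open basis of $G$. Since $\Gamma$ is discrete and $c$ is continuous, $c$ is locally constant. Because each $U\in F$ is compact, $c(U)$ is a finite subset of $\Gamma$. Put
\[
\Lambda\defeq \bigcup_{U\in F} c(U)\subset \Gamma ,
\]
which is finite. I will show that $c(G)$ is contained in the subgroup $\langle \Lambda\rangle$ generated by $\Lambda$. Once that is done, $\langle c(G)\rangle\subset\langle\Lambda\rangle$. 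On the other hand, $\Lambda\subset c(G)$, so $\langle c(G)\rangle=\langle\Lambda\rangle$, and this group is finitely generated.

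The key step is the following claim: for every $W\in S$ we have $c(W)\subset\langle\Lambda\rangle$. Every element of $S$ is a finite product $U_1^{\epsilon_1}U_2^{\epsilon_2}\cdots U_k^{\epsilon_k}$ with $U_j\in F$ and $\epsilon_j\in\{1,-1\}$, where $U^{-1}$ denotes the inverse bisection. Take $\gamma$ in such a product. By the definition of the product of bisections, $\gamma=\gamma_1\cdots\gamma_k$ with $\gamma_j\in U_j^{\epsilon_j}$. Since $c$ is a groupoid homomorphism, $c(\gamma)=c(\gamma_1)\cdots c(\gamma_k)$, and $c(\alpha^{-1})=c(\alpha)^{-1}$. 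Hence each factor $c(\gamma_j)$ lies in $\Lambda\cup\Lambda^{-1}$, and therefore $c(\gamma)\in\langle\Lambda\rangle$. An empty product does not occur, since $S$ is generated as a semigroup by $F\cup F^{-1}$.

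Finally, since $S$ is a basis of $G$, every $\gamma\in G$ lies in some $W\in S$. So $c(\gamma)\in\langle\Lambda\rangle$ by the claim. The ``in particular'' part follows immediately: if $c$ is surjective, then $\Gamma=c(G)\subset\langle\Lambda\rangle$.

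I expect no serious obstacle. The only points that need care are the two facts used above: that $c(U)$ is finite because $U$ is compact and $c$ is locally constant into a discrete group, and that the elements of the generated inverse semigroup are exactly the finite products of generators and their inverses.
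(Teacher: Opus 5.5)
Your proof is correct and takes essentially the same route as the paper. Compactness of the finitely many generating bisections forces $c$ to take only finitely many values on them, and the cocycle identity on products of generators, which cover $G$, then puts every value of $c$ in the subgroup generated by these values. The only cosmetic difference is that the paper first refines $F$ into the pieces $S\cap c^{-1}(t)$ so that $c$ is constant on each generator, whereas you take $\Lambda=\bigcup_{U\in F}c(U)$ directly; this also spares you checking that the refined family still generates a basis.
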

	
	\begin{proof}
		Take a finite set $F\subset \Bis^c(G)$ so that the inverse semigroup generated by $F$ forms a basis of $G$.
		For $S\in F$,
		there exists a finite set $A_F\subset \Gamma$ such that
		\[
		S=\coprod_{t\in A_F}(S\cap c^{-1}(t)).
		\]
		By replacing $F$ with $\{S\cap c^{-1}(t)\mid S\in F, t\in A_F\}$,
		we may assume that $c(S)$ is a singleton for each $S\in F$.
		Take $t_S\in \Gamma$ so that $c(S)=\{t_S\}$ for $S\in F$.
		Then the group generated by $c(G)$ coincides with the group generated by $\{t_S\}_{S\in F}$.
		Hence the group generated by $c(G)$ is finitely generated.
		\qed
	\end{proof}
	
	\begin{ex}\label{example: groupoid model of o_infinity is not expansive}
		We observe that the standard groupoid model $G_{\infty}\defeq P_{\infty}\ltimes_{\alpha} X$ of the Cuntz algebra $\mathcal{O}_{\infty}$ described in Subsubsection \ref{subsubsection:Cuntz algebra case} is not expansive.
		Indeed,
		let $\Gamma$ be a countable group that is not finitely generated (for example,
		$\Gamma=\bigoplus_{\N}\Z$).
		Then there exists a surjective continuous 1-cocycle $c\colon G_{\infty}\to \Gamma$ by Lemma \ref{lem: cocycle on Cuntz groupoid}.
		Since $\Gamma$ is not finitely generated,
		$G$ is not expansive by Proposition \ref{prop: image of expansive groupoid is finitely generated}.
	\end{ex}
	
	The following is a main theorem of this subsection.
	
	\begin{thm}\label{theorem every compact subgroup of Cartan fixing are fin gen}
		Let $G$ be a locally compact Hausdorff expansive effective \'etale groupoid.
		Assume that $H\subset \Aut_{C_0(G^{(0)})}(C^*_r(G))$ is compact and the fixed point subalgebra $C^*_r(G)^H$ is prime.
		Then the Pontryagin dual group $\widehat{H}$ is finitely generated.
	\end{thm}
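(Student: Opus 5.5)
The plan is to convert the compact group $H$ into a surjective continuous cocycle $G\to\widehat{H}$ and then apply Proposition \ref{prop: image of expansive groupoid is finitely generated}.

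\textbf{Step 1: $H$ as a compact group of cocycles.} By Corollary \ref{cor: C*-analogue of Feldman moore} and Proposition \ref{prop: Z(G) is isom to Aut as top groups}, with trivial twist, $H$ corresponds to a compact subgroup $K\subset Z(G)$ in the compact-open topology. Its action on $C^*_r(G)$ is $\varphi_c(f)=(c\circ q)\, f$.

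\textbf{Step 2: the evaluation cocycle.} For $\alpha\in G$, the map $\chi_\alpha\colon K\ni c\mapsto c(\alpha)\in\T$ is a continuous character of $K$, since evaluation is continuous in the compact-open topology. This gives a map
\[
\kappa\colon G\ni\alpha\mapsto \chi_\alpha\in\widehat{K}\cong\widehat{H}.
\]
Because each $c\in K$ is multiplicative, $\chi_{\alpha\beta}=\chi_\alpha\chi_\beta$, so $\kappa$ is a groupoid homomorphism into the discrete group $\widehat{H}$.

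For continuity, I would use that $G$ is ample and that $K$ is compact. Fix a compact open neighbourhood $L$ of $\alpha$. By Arzelà--Ascoli, the family $K|_L$ is equicontinuous at $\alpha$. Distinct characters of a compact group are at uniform distance at least $\sqrt2$, since for $\chi\neq\chi'$ the character $\chi\overline{\chi'}$ is nontrivial and its image is a nontrivial subgroup of $\T$. Equicontinuity therefore gives a neighbourhood $W$ of $\alpha$ with $\sup_{c\in K}|c(\beta)-c(\alpha)|<\sqrt2$ for all $\beta\in W$, which forces $\chi_\beta=\chi_\alpha$ on $W$. Hence $\kappa$ is locally constant, so it is a continuous $1$-cocycle.

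\textbf{Step 3: surjectivity from primeness.} It remains to show that $\kappa(G)$ generates $\widehat{H}$. Then Proposition \ref{prop: image of expansive groupoid is finitely generated} shows that $\widehat{H}$ is finitely generated. Let $\Lambda\subset\widehat{H}$ be the subgroup generated by $\kappa(G)$, and suppose $\Lambda\neq\widehat{H}$.

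\begin{itemize}
\item By Pontryagin duality, $\Lambda^\perp\subset K$ is a nontrivial closed subgroup.
\item Every $c\in\Lambda^\perp$ satisfies $c(\alpha)=\chi_\alpha(c)=1$ for all $\alpha\in G$, so $c$ is the trivial cocycle.
\item This contradicts injectivity of $K\hookrightarrow Z(G)$.
\end{itemize}

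So $\kappa(G)$ generates $\widehat{H}$ automatically. Once continuity is established, injectivity of $c\mapsto\varphi_c$ already gives generation.

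\textbf{Where primeness may enter.} The primeness assumption may then only be needed if one insists on $\kappa$ itself being surjective rather than merely generating. I would handle that via the fixed-point algebra $C^*_r(\ker\kappa)=C^*_r(G)^H$, using \cite[Proposition 3.3.2]{Komura2025Weyl}. Proposition \ref{prop: image of expansive groupoid is finitely generated} needs only generation, so this step is not required for the conclusion.

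\textbf{Main obstacle.} The main obstacle is Step 2: continuity of $\kappa$ on a possibly non-compact, non-second-countable $G$. The equicontinuity-plus-character-separation argument should handle it locally on compact neighbourhoods.
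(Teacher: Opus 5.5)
Your proof is correct. At the decisive step it takes a genuinely different route from the paper's, and that route gives a stronger result.

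The paper obtains the continuous cocycle $c\colon G\to\widehat{H}$ from \cite[Proposition 3.3.4]{Komura2025Weyl} and identifies $C^*_r(G)^H=C^*_r(\ker c)$. It then uses primeness, via Corollary~\ref{cor: prime is equivalent to topologically transitive}, to make $\ker c$ topologically transitive. Transitivity is what makes $c(G)$ a subgroup of $\widehat{H}$, so that faithfulness of the action forces $c(G)=\widehat{H}$. Only then does it apply Proposition~\ref{prop: image of expansive groupoid is finitely generated}.

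You instead observe that this proposition only needs the subgroup \emph{generated} by $c(G)$ to equal $\widehat{H}$. That follows from faithfulness alone by your annihilator argument: $\langle\kappa(G)\rangle^{\perp}\subset K$ consists of cocycles that are identically $1$. This step is sound, so your argument never uses primeness. It shows that for expansive effective $G$, \emph{every} compact subgroup of $\Aut_{C_0(G^{(0)})}(C^*_r(G))$ has finitely generated dual, i.e.\ is of the form $\T^n\times F$ with $F$ finite. The same observation removes the primeness and transitivity hypotheses from Corollary~\ref{cor: Gal group of prime subalgebra is fin gen} and Proposition~\ref{prop: Hperp is top transitive and G is expansive, Hhat is fin gen}. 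What the paper's route buys is structural information not needed for finite generation: $c$ itself is surjective and $\ker c$ is topologically transitive.

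Your continuity argument for $\kappa$, using equicontinuity of $K|_L$ and the $\sqrt{2}$-separation of distinct characters, is also correct. It needs only local compactness, not ampleness. Alternatively, you could cite Proposition~\ref{prop: Hom(H,Z(G)) is Z(G, hatH)}, which gives $\kappa\in Z(G,\widehat{H})$ directly because $\widehat{H}$ is discrete for compact $H$.
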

	\begin{proof}
		Let $\tau\colon H\curvearrowright C^*_r(G)$ denote the canonical action.
		By \cite[Proposition 3.3.4]{Komura2025Weyl},
		there exists a continuous 1-cocycle $c\colon G\to\widehat{H}$ such that
		\[
		\tau_h(a)(\alpha)=c(\alpha)(h)a(\alpha)
		\]
		for all $h\in H$, $a\in C^*_r(G)$ and $\alpha\in G$.
		By \cite[Proposition 3.3.2]{Komura2025Weyl},
		we have $C^*_r(G)^{\tau}=C^*_r(\ker c)$.
		Since we assume that $C^*_r(\ker c)=C^*_r(G)^{\tau}$ is prime,
		$\ker c$ is topologically transitive by Corollary \ref{cor: prime is equivalent to topologically transitive}.
		Hence,
		by \cite[Proposition 3.3.3]{Komura2025Weyl},
		$c$ is surjective since $\tau$ is a faithful action.
		Therefore,
		$\widehat{H}$ is finitely generated by Proposition \ref{prop: image of expansive groupoid is finitely generated}.
		\qed
		\end{proof}

	We present a non-uniqueness result of Cartan subalgebras as an application of Theorem \ref{theorem every compact subgroup of Cartan fixing are fin gen}.
	We remark that the non-uniqueness of Cartan subalgebras itself is not new.
	Indeed,
	in \cite[Proposition 5.7]{XinLiRenaultCartanexistence},
	the authors showed that every unital Kirchberg algebra satisfying the UCT admits infinitely many pairwise inequivalent Cartan subalgebras whose Gelfand spectra are all homeomorphic to a Cantor space.
	Moreover,
	they showed that the analogous statement holds for stable Kirchberg algebras as well in \cite[Proposition 5.8]{XinLiRenaultCartanexistence}.
	However, our approach is different from theirs.
	In \cite[Proposition 5.7]{XinLiRenaultCartanexistence},
	the authors distinguished Cartan subalgebras by examining the ranks of isotropy groups arising from their groupoid models.
	In contrast, we distinguish Cartan subalgebras using groups of Cartan-fixing automorphisms.
	
	\begin{cor}\label{cor: inequivalent Cartan}
		Let $G$ be a locally compact Hausdorff expansive effective \'etale groupoid with the compact unit space $G^{(0)}$.
		Assume that $C^*_r(G)$ is a Kirchberg algebra.
		Then $C^*_r(G)$ admits a Cartan subalgebra $D\subset C^*_r(G)$ such that
		\begin{itemize}
			\item the Gelfand spectrum of $D$ is a Cantor space, and
			\item $D$ is not equivalent to $C(G^{(0)})$,
			in the sense that there is no $\varphi\in \Aut (C^*_r(G))$ satisfying $\varphi(D)=C(G^{(0)})$.
		\end{itemize}
	\end{cor}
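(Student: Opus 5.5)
The idea is to compare the realization theorem with the obstruction theorem.

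First, note that $C^*_r(G)$ is a unital Kirchberg algebra, since $G^{(0)}$ is compact. I would like it to satisfy the UCT, so that Theorem \ref{Theorem: existence of universal Cartan subalgebra of Kirchberg algebra} applies. Here is the justification. A Kirchberg algebra is nuclear, so $G$ is amenable by \cite[Theorem 5.6.18]{brown}. Then $C^*_r(G)$ satisfies the UCT by \cite[Theorem 3.1]{BarlakLiCartanandUCTI}; equivalently, this follows because it has a Cartan subalgebra (Tu's theorem via Barlak--Li). With this in hand, Theorem \ref{Theorem: existence of universal Cartan subalgebra of Kirchberg algebra} yields a Cartan subalgebra $D\subset A\defeq C^*_r(G)$ with Cantor spectrum and the following property. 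For any second countable compact abelian group $H$ there is an injective continuous $\tau\colon H\hookrightarrow\Aut_D(A)$ such that $A^\tau$ is a unital Kirchberg algebra, in particular simple and hence prime.

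Choose $H$ with $\widehat H$ countable but not finitely generated, for instance $H=\widehat{\bigoplus_{\N}\Z}=\T^{\N}$, which is second countable and compact. Let $\tau$ be the corresponding embedding, so $\tau(H)\subset\Aut_D(A)$ is a compact subgroup, being the continuous image of a compact group, and $A^{\tau(H)}$ is prime.

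Now suppose, for contradiction, that $\varphi\in\Aut(A)$ satisfies $\varphi(D)=C(G^{(0)})$. Conjugating gives $K\defeq\varphi\tau(H)\varphi^{-1}\subset\Aut_{C(G^{(0)})}(C^*_r(G))$. This $K$ is compact, because conjugation by $\varphi$ is a homeomorphism of $\Aut(A)$ in the pointwise norm topology. Its fixed point algebra is $\varphi(A^{\tau(H)})$, which is prime. Theorem \ref{theorem every compact subgroup of Cartan fixing are fin gen} therefore says that $\widehat K$ is finitely generated.

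The step I expect to need care is identifying $\widehat K$ with $\widehat H$, that is, showing that $K\cong H$ as topological groups. The composite $h\mapsto\varphi\tau_h\varphi^{-1}$ is a continuous bijective homomorphism from the compact group $H$ onto the Hausdorff group $K$, so it is a homeomorphism. Hence $\widehat K\cong\widehat H=\bigoplus_{\N}\Z$, which is not finitely generated. This contradiction shows that no such $\varphi$ exists, so $D$ is not equivalent to $C(G^{(0)})$.
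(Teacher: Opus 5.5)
Your proof is correct and follows the paper's argument: secure the UCT via Barlak--Li, obtain $D$ from the realization theorem, and use the obstruction theorem for expansive groupoids to rule out any automorphism carrying $D$ onto $C(G^{(0)})$. The only differences are that you spell out the conjugation step $K=\varphi\tau(H)\varphi^{-1}$ and the identification $K\cong H$, which the paper's comparison of the classes $\mathcal{E}$ and $\mathcal{F}$ leaves implicit, and that you take $H=\T^{\N}$ where the paper uses $\Z_p$.
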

	
	\begin{proof}
		Note that $C^*_r(G)$ satisfies the UCT by \cite[Theorem 3.1]{BarlakLiCartanandUCTI}.
		By Theorem \ref{Theorem: existence of universal Cartan subalgebra of Kirchberg algebra},
		there exists a Cartan subalgebra $D\subset C^*_r(G)$ with the properties in Theorem \ref{Theorem: existence of universal Cartan subalgebra of Kirchberg algebra}.
		In particular,
		the Gelfand spectrum of $D$ is a Cantor space.
		Let $\mathcal{E}$ and $\mathcal{F}$ denote the sets of all compact abelian subgroups of $\Aut_{C(G^{(0)})}(C^*_r(G))$ (resp.\ $\Aut_{D}(C^*_r(G))$) such that the fixed point algebras are prime.
		Then $\mathcal{F}$ contains all second countable compact abelian groups by Theorem \ref{Theorem: existence of universal Cartan subalgebra of Kirchberg algebra},
		whereas,
		by Theorem \ref{theorem every compact subgroup of Cartan fixing are fin gen}, $\mathcal{E}$ does not contain compact abelian groups whose Pontryagin duals are not finitely generated,
		such as the $p$-adic integers $\Z_p$ for a prime number $p$.
		Hence,
		the inclusions $C(G^{(0)})\subset C^*_r(G)$ and $D\subset C^*_r(G)$ are not equivalent.
		\qed
	\end{proof}

	\begin{rem}
	In the previous corollary,
	we considered an \'etale groupoid $G$ such that $C^*_r(G)$ is a unital Kirchberg algebra.
	This is the case when $G$ is amenable, second countable minimal and locally contracting by \cite[Theorem 5.1]{Brown2014} and \cite[Proposition 2.4]{AnatharamanDelapurelyinfinite}.
	For example,
	the standard groupoid models of Cuntz algebras $\mathcal{O}_n$ satisfies these assumptions.
	More generally,
	these assumptions are satisfied by the standard groupoid models in \cite{Paterson2002} of graph algebras $C^*(E)$ for a finite directed graph $E$ such that every vertex connects to a loop and $E$ satisfies condition (L) by \cite[Theorem 3.9]{KumjianPaskRaeburnCuntzKrieger}.
	Note that these groupoid models are expansive since they can be realized as the transformation groupoids of finitely generated inverse semigroup actions with the properties in Example \ref{ex: finitely generated inverse semigroup actions yields expansive groupoids} by \cite{Paterson2002}.
	\end{rem}
	
	Next,
	we show Corollary \ref{cor: Gal group of prime subalgebra is fin gen},
	which is a variant of Theorem \ref{theorem every compact subgroup of Cartan fixing are fin gen}.
	While Theorem \ref{theorem every compact subgroup of Cartan fixing are fin gen}
	starts with a compact subgroup
	\[
	H\subset \Aut_{C_0(G^{(0)})}(C_r^*(G))
	\]
	with the prime fixed point subalgebra,
	Corollary \ref{cor: Gal group of prime subalgebra is fin gen} takes a prime intermediate subalgebra
	\[
	C_0(G^{(0)})\subset B\subset C_r^*(G)
	\]
	as its starting point and investigates the $B$-fixing automorphism group
	\[
	\Aut_B(C_r^*(G)).
	\]
	We prepare the following proposition to show Corollary \ref{cor: Gal group of prime subalgebra is fin gen}.
	
	\begin{prop}\label{prop: primeness of subalgebra passes to the ambient algebra}
		Let $G$ be a locally compact Hausdorff \'etale groupoid.
		Assume that $G$ is effective and an intermediate subalgebra $C_0(G^{(0)})\subset B\subset C^*_r(G)$ is prime.
		Then $C^*_r(G)$ is also prime and $G$ is topologically transitive.
	\end{prop}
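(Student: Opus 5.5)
I would first prove that $C^*_r(G)$ is prime directly from the primeness of $B$, and then deduce topological transitivity of $G$ from the first assertion of Corollary \ref{cor: prime is equivalent to topologically transitive}.

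To show $C^*_r(G)$ is prime, I take a nonzero ideal $I\subset C^*_r(G)$ and $a\in C^*_r(G)$ with $aI=\{0\}$, and I aim to show $a=0$. By Corollary \ref{cor: ideal intersection property} (which uses effectiveness), $I\cap C_0(G^{(0)})$ is a nonzero ideal of $C_0(G^{(0)})$. I would then pass to $B$ by setting $J\defeq I\cap B$. This is an ideal of $B$, and it is nonzero because it contains $I\cap C_0(G^{(0)})\neq\{0\}$, since $C_0(G^{(0)})\subset B$.

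Next I would produce an element of $B$ that annihilates $J$. The natural candidate is $b\defeq E(a^*a)\in C_0(G^{(0)})\subset B$, where $E$ is the standard conditional expectation. We have $a^*aI=\{0\}$, and for $j\in J\cap C_0(G^{(0)})$ the bimodule property gives $E(a^*a)j=E(a^*aj)=0$. This does not yet give $bJ=\{0\}$ for all of $J$, since $J$ may contain non-diagonal elements. I would therefore work with the ideal $J_0$ of $B$ generated by $I\cap C_0(G^{(0)})$, which is nonzero and contained in $J$, and check that $bJ_0=\{0\}$. Elements of $J_0$ are limits of sums $xfy$ with $x,y\in B$ and $f\in C_0(U)$, where $C_0(U)=I\cap C_0(G^{(0)})$ and $U$ is open and $G$-invariant by \cite[Lemma 10.3.1]{asims}. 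Thus I need $b\,xfy=0$.

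This is the main obstacle. The plan is to show first that $b$ vanishes on $U$: for $x\in U$ and $f\in C_0(U)$ with $f(x)=1$, we get $b(x)=(bf)(x)=0$. Then I would use invariance of $U$ together with the description of $B$ via normalizers. For a normalizer $n$ (a function supported on a bisection), $nfn^*$ lies in $C_0(U)$ because $U$ is invariant, so it is supported inside $U$. Hence $b\,nfy=0$: on the $j$-map level, the function $nf$ has support with range in $U$, and $b$ vanishes on $U$. If this is awkward, a cleaner alternative is to compare supports. The element $J_0$ is contained in the ideal $C^*_r(G_U)\cap B$, and every element there has $j$-image supported in $G_U=G^U$; multiplying on the left by $b\in C_0(G^{(0)})$, which vanishes on $U$, gives zero pointwise via the $j$-map. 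In either form, primeness of $B$ then yields $b=0$, so $E(a^*a)=0$, and faithfulness of $E$ gives $a=0$.

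Therefore $C^*_r(G)$ is prime, and topological transitivity of $G$ follows from Corollary \ref{cor: prime is equivalent to topologically transitive}.
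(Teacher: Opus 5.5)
Your proposal is correct and uses the same ingredients as the paper's proof: the ideal intersection property, invariance of $U$, the fact that elements of $C_0(G^{(0)})$ vanishing on $U$ annihilate $C^*_r(G_U)$, primeness of $B$ applied to a nonzero ideal of $B$ contained in $C^*_r(G_U)\cap B$, and faithfulness of $E$. The difference is only organizational: the paper first uses primeness of $B$ to show $U$ is dense and then shows $C^*_r(G_U)\subset I$ is essential, whereas you apply primeness of $B$ directly to $E(a^*a)$ (which vanishes on $U$) and so skip the density step. Your $j$-map support argument is enough to get $E(a^*a)J_0=\{0\}$, so the normalizer detour is unnecessary.
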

	
	\begin{proof}
		Assume that $I\subset C^*_r(G)$ is a nonzero ideal.
		Since $I\cap C_0(G^{(0)})$ is an ideal of $C_0(G^{(0)})$,
		there exists an open set $U\subset G^{(0)}$ such that $I\cap C_0(G^{(0)})=C_0(U)$.
		Since $G$ is effective,
		$U$ is nonempty by Corollary \ref{cor: ideal intersection property}.
		By \cite[Lemma 3.1 (1)]{KangLiidealstr},
		$U$ is invariant.
		Note that $C^*_r(G_U)$ is an ideal of $C^*_r(G)$ by \cite[Lemma 3.3]{KangLiidealstr}.
		Put $J\defeq C^*_r(G_U)\cap B$.
		Since we have $C_0(U)\subset C^*_r(G_U)$ and $C_0(U)\subset C_0(G^{(0)})\subset B$,
		$J$ is a nonzero ideal of $B$ and hence an essential ideal of $B$.
		Suppose that $U$ is not dense in $G^{(0)}$.
		Then there exists a nonempty open set $V\subset G^{(0)}$ with $U\cap V=\emptyset$.
		Take $h\in C_0(V)\setminus\{0\}$.
		Then we have $hC^*_r(G_U)=\{0\}$ and therefore $hJ=\{0\}$.
		Since $J$ is essential,
		we obtain $h=0$,
		which is a contradiction.
		Thus $U$ is dense in $G^{(0)}$.
		
		Next,
		we observe that $C^*_r(G_U)$ is an essential ideal of $C^*_r(G)$.
		Assume that $a\in C^*_r(G)$ satisfies $aC^*_r(G_U)=\{0\}$.
		For any $f\in C_c(U)$,
		we have $af=0$ and hence
		\[
		E(a^*a)f=E(a^*af)=0,
		\]
		where $E\colon C^*_r(G)\to C_0(G^{(0)})$ denotes the standard conditional expectation.
		Thus we obtain $E(a^*a)|_U=0$.
		Since $U\subset G^{(0)}$ is dense,
		$E(a^*a)=0$.
		Since $E$ is faithful,
		we obtain $a=0$.
		Hence $C^*_r(G_U)$ is an essential ideal of $C^*_r(G)$.
		Since we have $C^*_r(G_U)\subset I$, 
		$I$ is also essential in $C^*_r(G)$.
		Therefore $C^*_r(G)$ is prime.
		Now,
		the last assertion follows from Corollary \ref{cor: prime is equivalent to topologically transitive}.
		\qed
	\end{proof}

	\begin{cor}\label{cor: Gal group of prime subalgebra is fin gen}
		Let $G$ be a locally compact Hausdorff expansive effective \'etale groupoid.
		Assume that an intermediate subalgebra $C_0(G^{(0)})\subset B\subset C^*_r(G)$ is prime and $H\defeq \Aut_B(C^*_r(G))$ is compact.
		Then $\widehat{H}$ is finitely generated.
	\end{cor}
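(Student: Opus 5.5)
The plan is to reduce to Theorem \ref{theorem every compact subgroup of Cartan fixing are fin gen}. Since $C_0(G^{(0)})\subset B$, every automorphism fixing $B$ pointwise also fixes $C_0(G^{(0)})$ pointwise. Hence $H=\Aut_B(C^*_r(G))$ is a subgroup of $\Aut_{C_0(G^{(0)})}(C^*_r(G))$, and it is compact by assumption. It therefore suffices to show that the fixed point algebra $C^*_r(G)^H$ is prime. The theorem then gives that $\widehat{H}$ is finitely generated.

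To prove primeness of $C^*_r(G)^H$, I first note that $B\subset C^*_r(G)^H$ by the definition of $H$. Thus $C^*_r(G)^H$ is itself an intermediate subalgebra between $C_0(G^{(0)})$ and $C^*_r(G)$. I then describe it as a groupoid algebra.

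As in the proof of Theorem \ref{theorem every compact subgroup of Cartan fixing are fin gen}, \cite[Proposition 3.3.4]{Komura2025Weyl} gives a continuous cocycle $c\colon G\to\widehat{H}$ implementing the action. By \cite[Proposition 3.3.2]{Komura2025Weyl}, we get $C^*_r(G)^H=C^*_r(\ker c)$. Here $\ker c$ is an open (indeed clopen) subgroupoid containing $G^{(0)}$, and it is effective because it is open in the effective groupoid $G$.

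The main step is to transfer primeness from $B$ to $C^*_r(\ker c)$. I would apply Proposition \ref{prop: primeness of subalgebra passes to the ambient algebra} to the effective groupoid $\ker c$ in place of $G$. The intermediate subalgebra there is $B$, which satisfies $C_0(G^{(0)})\subset B\subset C^*_r(\ker c)$ and is prime by hypothesis. That proposition then yields that $C^*_r(\ker c)=C^*_r(G)^H$ is prime.

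The only point needing care is that $\ker c$ meets all the hypotheses of that proposition: it must be a locally compact Hausdorff effective étale groupoid, and $B$ must really sit inside $C^*_r(\ker c)$ as a subalgebra of the reduced algebra. Both follow from openness of $\ker c$ in $G$ and from the identification $C^*_r(G)^H=C^*_r(\ker c)$ inside $C^*_r(G)$.

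With primeness established, I apply Theorem \ref{theorem every compact subgroup of Cartan fixing are fin gen} to the compact subgroup $H$ and conclude that $\widehat{H}$ is finitely generated.
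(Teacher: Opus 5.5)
Your proposal is correct and follows essentially the same route as the paper. The paper also realizes $C^*_r(G)^H=C^*_r(\ker c)$ via the cocycle from \cite[Proposition 3.3.4]{Komura2025Weyl}, applies Proposition \ref{prop: primeness of subalgebra passes to the ambient algebra} to $\ker c$ with $B\subset C^*_r(\ker c)$ to get primeness, and then invokes Theorem \ref{theorem every compact subgroup of Cartan fixing are fin gen}; your explicit check that $\ker c$ is effective (open in $G$ and containing $G^{(0)}$) supplies a hypothesis the paper uses tacitly.
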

	
	\begin{proof}
		Let $\tau\colon H\curvearrowright C^*_r(G)$ denote the canonical action of $H$.
		By \cite[Proposition 3.3.4]{Komura2025Weyl},
		there exists a continuous 1-cocycle $c\colon G\to\widehat{H}$ such that
		\[
		\tau_h(a)(\alpha)=c(\alpha)(h)a(\alpha)
		\]
		for all $h\in H$, $a\in C^*_r(G)$ and $\alpha\in G$.
		By \cite[Proposition 3.3.2]{Komura2025Weyl},
		we have $C^*_r(G)^{\tau}=C^*_r(\ker c)$.
		Since we assume that $B$ is prime and 
		\[B\subset C^*_r(G)^{\tau}=C^*_r(\ker c),\]
		$C^*_r(G)^{\tau}=C^*_r(\ker c)$ is also prime by Proposition \ref{prop: primeness of subalgebra passes to the ambient algebra}.
		Hence we may apply Theorem \ref{theorem every compact subgroup of Cartan fixing are fin gen} and $\widehat{H}$ is finitely generated.
		\qed
	\end{proof}

	\subsection{Compact abelian subgroups of $Z(G)$}
	
	In Theorem \ref{theorem every compact subgroup of Cartan fixing are fin gen},
	we observed that there is an obstruction to realizing compact abelian groups as subgroups of  $\Aut_{C_0(G^{(0)})}(C^*_r(G))$.
	In this subsection,
	we shift our focus to \'etale groupoids themselves rather than groupoid C*-algebras.
	Our aim in this subsection is to prove Proposition \ref{prop: Hperp is top transitive and G is expansive, Hhat is fin gen},
	which is a groupoid-theoretic analogue of Theorem \ref{theorem every compact subgroup of Cartan fixing are fin gen}.
	
	To this end,
	we begin by investigating groups of 1-cocycles on \'etale groupoids.
	Note that $\Hom(H,Z(G))$ and $Z(G,\widehat{H})$ in the following proposition are topological groups equipped with the compact-open topology.
	
	\begin{prop}\label{prop: Hom(H,Z(G)) is Z(G, hatH)}
		Let $G$ be a locally compact Hausdorff \'etale groupoid and $H$ be a locally compact abelian group.
		Then the following groups
		\begin{align*}
			\Hom(H, Z(G))&\defeq \{\sigma \colon H\to Z(G)\mid \text{$\sigma$ is a continuous group homomorphism}\},\\
			Z(G,\widehat{H})&\defeq \{c\colon G\to \widehat{H}\mid \text{$c$ is a continuous 1-cocycle}\}
		\end{align*}
		are isomorphic as topological groups.
		More precisely,
		for $\sigma\in\Hom(H,Z(G))$ and $c\in Z(G,\widehat{H})$,
		define $\widetilde{\sigma}\in Z(G,\widehat{H})$ and $\widetilde{c}\in \Hom(H,Z(G))$ by
		\begin{align*}
			\widetilde{\sigma}(\alpha)(h)&\defeq \sigma(h)(\alpha)\\
			\widetilde{c}(h)(\alpha)&\defeq c(\alpha)(h)
		\end{align*}
		for $\alpha\in G$ and $h\in H$.
		Then the map $\sigma\mapsto \widetilde{\sigma}$ defines an isomorphism from $\Hom(H,Z(G))$ to $Z(G,\widehat{H})$ and $c\mapsto \widetilde{c}$ is its inverse.
		
	\end{prop}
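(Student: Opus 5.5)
This is an exponential-law argument for the compact-open topology. The two assignments are formal transposes of each other, so the work is in checking that each lands in the correct space and that both are continuous.

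\emph{Well-definedness of $\widetilde{\sigma}$.} Take $\sigma\in\Hom(H,Z(G))$. For fixed $\alpha\in G$, the map $h\mapsto\sigma(h)(\alpha)$ is the composite of $\sigma$ with evaluation at $\alpha$. Evaluation at a point is continuous on $Z(G)$, since $\{\alpha\}$ is compact. It is also a homomorphism $H\to\T$, because multiplication in $Z(G)$ is pointwise. Hence $\widetilde{\sigma}(\alpha)\in\widehat{H}$. The cocycle identity $\widetilde{\sigma}(\alpha\beta)=\widetilde{\sigma}(\alpha)\widetilde{\sigma}(\beta)$ holds because each $\sigma(h)$ is a $1$-cocycle.

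\emph{Well-definedness of $\widetilde{c}$.} Take $c\in Z(G,\widehat{H})$. Each $\widetilde{c}(h)$ is the composite of $c$ with evaluation $\widehat{H}\ni\chi\mapsto\chi(h)$, which is continuous on $\widehat{H}$. It is a cocycle because $c$ is one. Also $h\mapsto\widetilde{c}(h)$ is a homomorphism. The two assignments are mutually inverse group homomorphisms by construction.

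\emph{Continuity of $\widetilde{\sigma}\colon G\to\widehat{H}$ and of $\widetilde{c}\colon H\to Z(G)$.} This is the key point, and it uses local compactness of both $G$ and $H$. The relevant fact is that, for $X$ locally compact Hausdorff, a map $X\to C(Y,\T)$ is continuous if and only if the associated map $X\times Y\to\T$ is continuous; the reverse implication needs $Y$ locally compact. Here we take $(X,Y)=(H,G)$ and $(G,H)$.

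So it suffices to show that $(h,\alpha)\mapsto\sigma(h)(\alpha)$ is jointly continuous on $H\times G$. This holds because the evaluation map $C(G,\T)\times G\to\T$ is continuous for locally compact $G$ (\cite[Proposition A.14]{Hatcher_alg_top}), and the map is the composite of $\sigma\times\id_G$ with evaluation.

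Similarly, $(\alpha,h)\mapsto c(\alpha)(h)$ is jointly continuous. Here one uses continuity of evaluation $\widehat{H}\times H\to\T$, which holds because $\widehat{H}$ carries the compact-open topology and $H$ is locally compact.

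Then the exponential law (\cite[Proposition A.14(b)]{Hatcher_alg_top}) gives continuity of $\alpha\mapsto\widetilde{\sigma}(\alpha)\in C(H,\T)$ and of $h\mapsto\widetilde{c}(h)\in C(G,\T)$. Since $\widehat{H}$ and $Z(G)$ carry the subspace topologies, the maps are continuous into them.

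\emph{The correspondence is a homeomorphism.} Both $\Hom(H,Z(G))$ and $Z(G,\widehat{H})$ carry compact-open topologies, and this is the main obstacle. I would verify it directly on subbasic sets, using the fact that on these groups of $\T$-valued maps the compact-open topology is the topology of uniform convergence on compact sets.

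First consider a subbasic neighbourhood in $Z(G,\widehat{H})$, of the form $\{c\,:\,c(K)\subset U\}$ with $K\subset G$ compact. Shrinking $U$ if necessary, take $U=\{\chi\,:\,|\chi(h)-\chi_0(h)|<\varepsilon \text{ for } h\in L\}$ with $L\subset H$ compact.

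The condition then reduces to uniform closeness of $c(\alpha)(h)$ on the compact set $K\times L$. This is the same as uniform closeness of $\widetilde{c}(h)(\alpha)$ on $L\times K$, which is exactly a neighbourhood condition in $\Hom(H,Z(G))$ via the sets $\{\sigma\,:\,\sigma(L)\subset[K,V]\}$.

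In other words, both topologies coincide with uniform convergence of the two-variable function on products $K\times L$ of compacta. To make this rigorous I would use the fact that, for locally compact $G$ and $H$, the compact-open topology on $C(H,C(G,\T))$ agrees with that on $C(H\times G,\T)$ (\cite[Proposition A.16]{Hatcher_alg_top}). Every compact subset of $H\times G$ lies in a product of compacta, so the identifications $C(H,C(G,\T))\cong C(H\times G,\T)\cong C(G,C(H,\T))$ are homeomorphisms. These homeomorphisms restrict to the subspaces $\Hom(H,Z(G))$ and $Z(G,\widehat{H})$, which gives the result.
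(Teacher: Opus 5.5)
Your proposal is correct and takes essentially the same route as the paper. Both rest on the exponential-law identification $C(H,C(G,\T))\cong C(H\times G,\T)\cong C(G,C(H,\T))$ from \cite[Proposition A.14, A.16]{Hatcher_alg_top}, restricted to the subspaces $\Hom(H,Z(G))$ and $Z(G,\widehat{H})$; you additionally spell out, via joint continuity and continuity of evaluation, why $\widetilde{\sigma}$ and $\widetilde{c}$ are continuous, which the paper leaves implicit in the identification.

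One small slip in your statement of the exponential law: the local compactness hypothesis belongs on $Y$, and it is needed only for ``curried map continuous $\Rightarrow$ joint map continuous''. The direction you actually use needs no hypothesis, so your argument is unaffected.
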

	
	\begin{proof}
		First,
		observe that $\widetilde{\sigma}\colon G\to \widehat{H}$ and $\widetilde{c}\colon H\to Z(G)$ are homomorphisms for $\sigma\in\Hom(H,Z(G))$ and $c\in Z(G,\widehat{H})$.
		Indeed,
		for $(\alpha,\beta)\in G^{(2)}$ and $h\in H$,
		we have
		\[
		\widetilde{\sigma}(\alpha\beta)(h)=\sigma(h)(\alpha\beta)=\sigma(h)(\alpha)\sigma(h)(\beta)=\widetilde{\sigma}(\alpha)(h)\widetilde{\sigma}(\beta)(h)
		\]
		and hence $\widetilde{\sigma}\colon G\to\widehat{H}$ is a homomorphism.
		One can see that $\widetilde{c}\colon H\to Z(G)$ is also a homomorphism in the same way.
		
		For topological spaces $X$ and $Y$,
		let $C(X,Y)$ denote the set of continuous maps from $X$ to $Y$,
		equipped with the compact-open topology.	
		Then,
		since we assume that $G$ and $H$ are locally compact Hausdorff,
		we have the natural identification
		\[
		C(H, C(G,\T))\simeq C(G, C(H,\T))(\simeq C(H\times G,\T))
		\]
		by \cite[Proposition A.14, A.16]{Hatcher_alg_top}.
		Note that this identification is a natural extension of the map
		\[
		\Hom(H,Z(G))\ni \sigma\mapsto\widetilde{\sigma}\in Z(G,\widehat{H}).
		\]
		Hence $\Hom(H,Z(G))$ and $Z(G,\widehat{H})$ are homeomorphic via this map.

		Now,
		it suffices to show that the map $\sigma\mapsto \widetilde{\sigma}$ is a group homomorphism from $\Hom(H,Z(G))$ to $Z(G,\widehat{H})$.
		Take $\sigma_1,\sigma_2\in\Hom (H, Z(G))$.
		Then,
		for $\alpha\in G$ and $h\in H$,
		\[
		\widetilde{\sigma_1\sigma_2}(\alpha)(h)=\sigma_1\sigma_2(h)(\alpha)=\sigma_1(h)(\alpha)\sigma_2(h)(\alpha)=\widetilde{\sigma_1}\widetilde{\sigma_2}(\alpha)(h).
		\]
		Hence $\sigma\mapsto \widetilde{\sigma}$ is a group homomorphism.
		This completes the proof.
		\qed
	\end{proof}

	For $A\subset Z(G)$,
	we define
	\[
	A^{\perp}\defeq \{\alpha\in G\mid \text{$\sigma(\alpha)=1$ for all $\sigma\in A$}\}.
	\]
	\begin{prop}\label{prop: ImHperp=kerhatsigma}
		Let $G$ be a locally compact Hausdorff \'etale groupoid and $H$ be a compact abelian group.
		For $\sigma\in\Hom(H,Z(G))$,
		\[
		\sigma(H)^{\perp}=\ker\widetilde{\sigma}.
		\]
		In particular,
		$\sigma(H)^{\perp}\subset G$ is a closed and open subgroupoid of $G$ with $G^{(0)}\subset \sigma(H)^{\perp}$.
	\end{prop}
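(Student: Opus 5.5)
The equality $\sigma(H)^{\perp}=\ker\widetilde{\sigma}$ will follow by unwinding definitions through the isomorphism of Proposition \ref{prop: Hom(H,Z(G)) is Z(G, hatH)}. For $\alpha\in G$, we have $\alpha\in\sigma(H)^{\perp}$ if and only if $\sigma(h)(\alpha)=1$ for all $h\in H$. By definition, $\sigma(h)(\alpha)=\widetilde{\sigma}(\alpha)(h)$, so this says exactly that the character $\widetilde{\sigma}(\alpha)\in\widehat{H}$ is trivial, i.e.\ $\alpha\in\ker\widetilde{\sigma}$. Here $\ker\widetilde{\sigma}\defeq\widetilde{\sigma}^{-1}(\{1_{\widehat{H}}\})$, the preimage of the identity of $\widehat{H}$. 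This settles the first assertion with no real work.

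For the second assertion, we first note that $\ker\widetilde{\sigma}$ is a subgroupoid because $\widetilde{\sigma}$ is a groupoid homomorphism. It contains $G^{(0)}$ because a homomorphism into a group sends units to the identity: $\widetilde{\sigma}(x)=\widetilde{\sigma}(x)^2$ forces $\widetilde{\sigma}(x)=1$. It is closed because $\widetilde{\sigma}$ is continuous by Proposition \ref{prop: Hom(H,Z(G)) is Z(G, hatH)} and $\{1\}$ is closed in the Hausdorff group $\widehat{H}$. Alternatively, closedness can be seen directly: $\sigma(H)^{\perp}=\bigcap_{h\in H}\sigma(h)^{-1}(\{1\})$ is an intersection of closed sets.

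Openness is where compactness of $H$ enters, and it is the only nontrivial point. Since $H$ is compact, $\widehat{H}$ is discrete, so $\{1\}$ is open in $\widehat{H}$ and its preimage $\ker\widetilde{\sigma}$ under the continuous map $\widetilde{\sigma}$ is open in $G$. For this we only need that $\widehat{H}$ carries the compact-open topology, which is the topology used in Proposition \ref{prop: Hom(H,Z(G)) is Z(G, hatH)}, and that this topology is discrete for compact $H$. The latter is standard: the set $[H,\{z\in\T : \operatorname{Re}z>0\}]$ is a neighbourhood of $1$ containing only the trivial character, since a nontrivial character of a compact group has image a nontrivial subgroup of $\T$, and any such subgroup contains an element with nonpositive real part. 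I expect this discreteness remark to be the only step requiring justification beyond definitions.
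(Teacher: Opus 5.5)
Your proof is correct and follows the paper's argument. The equality comes from unwinding $\sigma(h)(\alpha)=\widetilde{\sigma}(\alpha)(h)$, and the clopen-subgroupoid claim from continuity of $\widetilde{\sigma}$ together with discreteness of $\widehat{H}$; your extra checks (subgroupoid, units, and discreteness via the neighbourhood $[H,\{z\in\T : \operatorname{Re}z>0\}]$) are valid details the paper leaves implicit.
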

	\begin{proof}
		The condition $\alpha\in \sigma(H)^{\perp}$ holds if and only if
		\[
		1=\sigma(h)(\alpha)=\widetilde{\sigma}(\alpha)(h)
		\]
		holds for all $h\in H$.
		Hence $\alpha\in \sigma(H)^{\perp}$ if and only if $\alpha\in\ker\widetilde{\sigma}$.
		This completes the proof of the former assertion.
		Now,
		the last assertion follows from the fact that $\widetilde{\sigma}\colon G\to \widehat{H}$ is continuous and $\widehat{H}$ is discrete. 
		\qed
	\end{proof}
	
	The following proposition is a variant of \cite[Proposition 3.3.3]{Komura2025Weyl}.
	In \cite[Proposition 3.3.3]{Komura2025Weyl},
	for $\sigma \in \Hom(H,Z(G))$,
	the author provided a sufficient condition for $\widetilde{\sigma}$ to be
	surjective, formulated in terms of the induced action $\widehat{\sigma}\colon H \curvearrowright C_r^*(G)$.
	In the next proposition, we give a sufficient condition for $\widetilde{\sigma}\colon G \to \widehat{H}$ to be surjective without using groupoid C*-algebras.

	\begin{prop}\label{prop: injectiveness of sigma is surjectivenss of hat sigma}
		Let $G$ be a locally compact Hausdorff \'etale groupoid,
		$H$ be a compact abelian group and $\sigma\in\Hom(H,Z(G))$.
		Then $\sigma\colon H\to Z(G)$ is injective if $\widetilde{\sigma}\colon G\to \widehat{H}$ is surjective.
		Conversely,
		if $\sigma\colon H\to Z(G)$ is injective and $\sigma(H)^{\perp}\subset G$ is topologically transitive,
		then $\widetilde{\sigma}\colon G\to \widehat{H}$ is surjective.
	\end{prop}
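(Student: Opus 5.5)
The first assertion is a duality argument. Suppose $\widetilde{\sigma}$ is surjective and $h\in H$ satisfies $\sigma(h)=1$ in $Z(G)$. For every $\alpha\in G$ we then have $\widetilde{\sigma}(\alpha)(h)=\sigma(h)(\alpha)=1$. Since $\widetilde{\sigma}(G)=\widehat{H}$, every character of $H$ takes the value $1$ at $h$. Pontryagin duality for the compact abelian group $H$ says that characters separate points, so $h$ is the identity and $\sigma$ is injective.

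For the converse, assume $\sigma$ is injective and $K\defeq\sigma(H)^{\perp}=\ker\widetilde{\sigma}$ is topologically transitive; the equality is Proposition \ref{prop: ImHperp=kerhatsigma}. The plan is to show that the image $\Lambda\defeq\widetilde{\sigma}(G)\subset\widehat{H}$ is a subgroup. Once that is done, the annihilator $\Lambda^{\perp}\subset H$ consists of those $h$ with $\sigma(h)(\alpha)=1$ for all $\alpha\in G$, that is, $\sigma(h)=1$. Injectivity forces $\Lambda^{\perp}=\{e\}$, and duality for the discrete group $\widehat{H}$ then gives $\Lambda=\Lambda^{\perp\perp}=\widehat{H}$.

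The main obstacle is showing that $\Lambda$ is a subgroup. Closure under inverses and containing the identity are automatic, since $\widetilde{\sigma}(\alpha^{-1})=\widetilde{\sigma}(\alpha)^{-1}$ and units map to the identity. The issue is products of non-composable arrows, and this is where transitivity of $K$ enters.

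Take $\alpha,\beta\in G$. Each fibre $\widetilde{\sigma}^{-1}(t)$ is open because $\widehat{H}$ is discrete, and $G$ is \'etale, so I choose open bisections $U\ni\alpha$ and $V\ni\beta$ on which $\widetilde{\sigma}$ is constant. The open sets $d(U)$ and $r(V)$ in $G^{(0)}$ are nonempty. Topological transitivity of $K$ says that the $K$-saturation $r(K_{r(V)})$ of the nonempty open set $r(V)$ is dense in $G^{(0)}$. Hence it meets $d(U)$, and there is $\kappa\in K$ with $d(\kappa)\in r(V)$ and $r(\kappa)\in d(U)$. Let $\alpha'\in U$ satisfy $d(\alpha')=r(\kappa)$, and let $\beta'\in V$ satisfy $r(\beta')=d(\kappa)$. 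Then $\alpha'\kappa\beta'$ is a composable product, and
\[
\widetilde{\sigma}(\alpha'\kappa\beta')=\widetilde{\sigma}(\alpha)\cdot 1\cdot\widetilde{\sigma}(\beta).
\]
So $\Lambda$ is closed under products and is a subgroup, which completes the plan.
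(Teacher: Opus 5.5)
Your proof is correct and follows the paper's argument: the first part uses the same fact that characters separate points, and for the converse the paper also shows $\widetilde{\sigma}(G)$ is a subgroup of $\widehat{H}$ and then observes that a proper subgroup would have a nontrivial annihilator $h\in H$ with $\sigma(h)=1$, contradicting injectivity. The only difference is that the paper cites \cite[Lemma 2.2.2]{Komurasubmod} for the subgroup property, whereas you prove it directly by building the composable product $\alpha'\kappa\beta'$ with $\kappa\in\sigma(H)^{\perp}$, which is a valid self-contained replacement.
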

	
	\begin{proof}
		First, assume that $\widetilde{\sigma}$ is surjective and $h\in\ker\sigma$.
		Then we have
		\[
		\widetilde{\sigma}(\alpha)(h)=\sigma(h)(\alpha)=1
		\]
		for all $\alpha\in G$.
		Since we have $\widetilde{\sigma}(G)=\widehat{H}$,
		we obtain $h=e$.
		Next,
		assume that $\sigma$ is injective and $\sigma(H)^{\perp}$ is topologically transitive.
		Since we have $\ker\widetilde{\sigma}=\sigma(H)^{\perp}$ by Proposition \ref{prop: ImHperp=kerhatsigma},
		$\ker\widetilde{\sigma}\subset G$ is topologically transitive.
		Hence $\widetilde{\sigma}(G)\subset \widehat{H}$ is a subgroup by \cite[Lemma 2.2.2]{Komurasubmod}.
		If $\widetilde{\sigma}(G)\subsetneq \widehat{H}$,
		there exists $h\in H\setminus\{e\}$ such that $\widetilde{\sigma}(\alpha)(h)=1$ for all $\alpha\in G$.
		Since $\sigma(h)(\alpha)=\widetilde{\sigma}(\alpha)(h)=1$ holds for all $\alpha\in G$,
		we have $\sigma(h)=1$.
		Since $\sigma$ is injective,
		it follows $h=e$ and this contradicts to $h\in H\setminus\{e\}$.
		Hence we obtain $\widetilde{\sigma}(G)=\widehat{H}$ and $\widetilde{\sigma}$ is surjective.
		\qed
	\end{proof}

	The following proposition is the groupoid-theoretic analogue of Theorem \ref{theorem every compact subgroup of Cartan fixing are fin gen}.
	Compared to Theorem \ref{theorem every compact subgroup of Cartan fixing are fin gen},
	remark that we do not assume that $G$ is effective. 
		
	\begin{prop}\label{prop: Hperp is top transitive and G is expansive, Hhat is fin gen}
		Let $G$ be a locally compact Hausdorff expansive \'etale groupoid.
		Assume that a subgroup $H\subset Z(G)$ is compact and the subgroupoid
		\[
		H^{\perp}=\{\alpha\in G\mid \text{$h(\alpha)=1$ for all $h\in H$}\}
		\]
		of $G$ is topologically transitive.
		Then the Pontryagin dual group $\widehat{H}$ is finitely generated.
	\end{prop}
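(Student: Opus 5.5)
Take $\sigma\colon H\hookrightarrow Z(G)$ to be the inclusion map. It is a continuous injective homomorphism from the compact abelian group $H$ (with the subspace topology of the compact-open topology), so $\sigma\in\Hom(H,Z(G))$. By Proposition \ref{prop: Hom(H,Z(G)) is Z(G, hatH)}, this gives a continuous 1-cocycle $\widetilde{\sigma}\colon G\to\widehat{H}$ defined by $\widetilde{\sigma}(\alpha)(h)=h(\alpha)$. Since $H$ is compact, $\widehat{H}$ is discrete, and $\widetilde{\sigma}$ is a continuous cocycle into a discrete group, which is the setting of Proposition \ref{prop: image of expansive groupoid is finitely generated}.

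Next I would show that $\widetilde{\sigma}$ is surjective. By Proposition \ref{prop: ImHperp=kerhatsigma}, $\sigma(H)^{\perp}=H^{\perp}=\ker\widetilde{\sigma}$, and this is topologically transitive by hypothesis. Since $\sigma$ is injective, the converse direction of Proposition \ref{prop: injectiveness of sigma is surjectivenss of hat sigma} gives that $\widetilde{\sigma}$ is surjective. Finally, $G$ is expansive, so Proposition \ref{prop: image of expansive groupoid is finitely generated} applied to the surjective cocycle $\widetilde{\sigma}\colon G\to\widehat{H}$ shows that $\widehat{H}$ is finitely generated.

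The only point needing care is that the inclusion $\sigma$ really lies in $\Hom(H,Z(G))$, which requires $H$ to carry the compact-open topology inherited from $Z(G)$. I read "compact" in the statement as meaning compact in that topology, so continuity of $\sigma$ is automatic. No effectiveness of $G$ is needed, since every step above is purely groupoid-theoretic. The surjectivity step is the substantive one, but it is supplied by Proposition \ref{prop: injectiveness of sigma is surjectivenss of hat sigma}, which rests on \cite[Lemma 2.2.2]{Komurasubmod}: the image of a cocycle whose kernel is topologically transitive is a subgroup.
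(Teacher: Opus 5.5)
Your proposal is correct and follows essentially the same route as the paper's proof. Both take the inclusion $H\hookrightarrow Z(G)$, pass to the cocycle $\widetilde{\iota}\colon G\to\widehat{H}$, and get surjectivity from injectivity together with topological transitivity of $\ker\widetilde{\iota}=H^{\perp}$. Both then conclude by Proposition \ref{prop: image of expansive groupoid is finitely generated}.
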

	
	\begin{proof}
		We denote the inclusion map by $\iota\colon H\to Z(G)$.
		By Proposition \ref{prop: Hom(H,Z(G)) is Z(G, hatH)},
		we obtain $\widetilde{\iota}\in Z(G,\widehat{H})$.
		Since we assume that $\ker \widetilde{\iota}=H^{\perp}$ is topologically transitive and $\iota$ is injective,
		$\widetilde{\iota}$ is surjective by Proposition \ref{prop: injectiveness of sigma is surjectivenss of hat sigma}.
		Now,
		$\widehat{H}$ is finitely generated by Proposition \ref{prop: image of expansive groupoid is finitely generated}.
		\qed
	\end{proof}

\bibliographystyle{plain}
\bibliography{bunken}

\end{document}